\providecommand{\U}[1]{\protect\rule{.1in}{.1in}}
\newtheorem{theorem}{Theorem}
\newtheorem{definition}[theorem]{Definition}
\newtheorem{lemma}[theorem]{Lemma}
\newtheorem{proposition}[theorem]{Proposition}
\newtheorem{remark}[theorem]{Remark}
\numberwithin{equation}{section}
\newcommand{\vep}{\varepsilon}
\renewcommand{\d}{\mathrm{d}}
\definecolor{ggreen}{rgb}{0.55,0.71,0.0}
\begin{document}

\title[Nonpotential perturbations to doubly-nonlinear flows]{Elliptic-regularization of nonpotential
  perturbations of doubly-nonlinear gradient flows of nonconvex energies: A variational approach}

\thanks{G.A.~is supported by JSPS KAKENHI Grant Number 16H03946 and by the Alexander von Humboldt Foundation and by the Carl Friedrich von Siemens Foundation. S.M.~is supported by the Austrian Science Fund (FWF) project P27052-N25.  The Authors would like to acknowledge the kind hospitality of the Erwin Schr\"odinger International Institute for Mathematics and Physics, where part of this research was developed under the frame of the Thematic Program {\it Nonlinear Flows}.
}

\author{Goro Akagi}
\address[ Goro Akagi]{Mathematical Institute, Tohoku University, Aoba, Sendai 980-8578 Japan; Helmholtz Zentrum M\"unchen, Institut f\"ur Computational Biology, Ingolst\"adter Landstra\ss e 1, 85764 Neunerberg, Germany; Technische  Universit\"at  M\"unchen, Zentrum Mathematik, Boltzmannstra\ss e 3, D-85748 Garching bei M\"unchen, Germany.}
\email{akagi@m.tohoku.ac.jp}
\author{Stefano Melchionna}
\address[Stefano Melchionna]{ University of Vienna, Faculty of Mathematics,
Oskar-Morgenstern-Platz 1, 1090 Wien, Austria.}
\email{stefano.melchionna@univie.ac.at}
\maketitle

 \begin{abstract}
This paper presents a variational approach to  doubly-nonlinear  (gradient) flows  (P)  of nonconvex energies along with  nonpotential  perturbations (i.e., perturbation terms without any potential structures). An elliptic-in-time regularization of the original equation  ${\rm (P)}_\vep$  is introduced, and then, a variational approach and a fixed-point argument are employed to prove existence of strong solutions to regularized equations. More precisely, we introduce a functional (defined for each entire trajectory and including a small approximation parameter $\varepsilon$) whose Euler-Lagrange equation corresponds to the elliptic-in-time regularization of an unperturbed (i.e.~without  nonpotential  perturbations)  doubly-nonlinear  flow. Secondly, due to the presence of  nonpotential  perturbation, a fixed-point argument is performed to construct strong solutions $u_\varepsilon$ to the elliptic-in-time regularized equations  ${\rm (P)}_\vep$. Here, the minimization problem mentioned above defines an operator $S$ whose fixed point corresponds to a solution $u_\varepsilon$ of  ${\rm (P)}_\vep$. Finally, a strong solution to the original equation  (P)  is obtained by passing to the limit of $u_\vep$ as $\varepsilon \to 0$. Applications of the abstract theory developed in the present paper to concrete PDEs are also exhibited.
 \end{abstract}


\section{Introduction}

In this paper, we deal with a \textit{ nonpotential  perturbation} problem ~(P) = $\{\eqref{target equation},\eqref{IC}\}$ of a  doubly-nonlinear  (gradient) flow driven by a \emph{dissipation potential} $\psi$ and a (possibly) nonconvex \emph{energy functional} $\phi$ defined on a uniformly convex Banach space $V$,
\begin{align}
\mathrm{d}_{V}\psi(u^{\prime})+\partial\phi(u)-f(u)  &  \ni0  \ \text{ a.e. in
}(0,T)\text{,}\label{target equation}\\
u(0)  &  =u_{0} \label{IC}%
\end{align}
where $u^{\prime}$ denotes the time derivative of the unknown $u:[0,T]\rightarrow V$, $\psi:V\rightarrow\lbrack0,\infty]$ is supposed to be convex and G\^ateaux differentiable and its (G\^ateaux) derivative is denoted by $\d_V \psi$, $\partial \phi$ is a derivative of $\phi$ (in a proper sense) and $f(\cdot)$ is a  nonpotential  mapping from $V$ into its dual space $V^*$ (see below for more details). We assume that $\phi$ can be decomposed into the difference of two convex functionals,
\[
\phi=\varphi^{1}-\varphi^{2}
\]
where $\varphi^{1},\varphi^{2}:V\rightarrow(-\infty,\infty]$ are proper, lower semicontinuous, and convex functionals.  We assume that $\varphi_1$ dominates $\varphi_2$ in a suitable sense (cf. (\ref{phi2 bound})). This ensures that the difference
$\phi$ is well defined.  Here and henceforth, we simply write $\partial \phi=\partial\varphi^{1}-\partial\varphi^{2}$, where the symbol $\partial$ in the right-hand side denotes the subdifferential of convex analysis, unless any confusion may arise. Let us emphasize that we do not assume any potential structure on $f$ (e.g. $f=\partial F$), although we shall impose the continuity as well as some growth condition on the map $f$. Hence, throughout the paper, the perturbation term $f$ is said to be \textit{ nonpotential }.

The study of  doubly-nonlinear  evolution equations of the form \eqref{target equation} with $f \equiv 0$ was initiated by Barbu~\cite{Barbu75}, Arai~\cite{Arai}, Senba~\cite{Senba}, Colli-Visintin~\cite{CV}, and  Colli~\cite{Colli}, and then, they have been vigorously studied so far by many authors (see, e.g.,~\cite{AiziYan},~\cite{Ak},~\cite{GO3},~\cite{ASc},~\cite{Ak-St},~\cite{BoSc04},~\cite{Fr1,Fr2},~\cite{RR15},~\cite{MiTh},~\cite{MiRo},~\cite{MRS},~\cite{Roubicek},~\cite{SSS},
~\cite{Segatti},~\cite{S-ken},~\cite{St08},\cite{Visintin}, and references therein). Many of them are motivated in view of applications to physics and engineering, for  doubly-nonlinear  equations are often introduced to describe important irreversible phenomena such as phase transition, friction, damage, and so on. Equation \eqref{target equation} is extremely general and can cover an extensive class of nonlinear PDEs and evolution equations appeared in the field of dissipative phenomena (e.g., nonlinear diffusion and phase transition). The formulation (\ref{target equation}) covers standard gradient  flows  (i.e., the case $\d_V \psi(u') = u'$ which corresponds to a quadratic dissipation potential $\psi$ on a Hilbert space),  doubly-nonlinear  flows (for general dissipation potentials defined on Banach spaces), and moreover, it can also comply with  nonpotential  perturbations. Particularly, in the study of PDEs, one can often find out (possibly partial) gradient structures concealed in many PDEs describing dissipation phenomena, and such a gradient structure induces a leading feature of each equation; however, in most of cases, equations do not have full gradient structure and they also include some reminder terms which prevent us to reduce the equations into complete forms of gradient flows (e.g.,  typical examples may be Navier-Stokes equation and systems of PDEs.  See also \^Otani~\cite{Ot1,Ot2}). In order to cover such a wider class of equations (with partial gradient structures), we shall develop a  nonpotential  perturbation theory for  doubly-nonlinear  flows. In Section \ref{applications}, we shall treat some system of PDEs as an example of such equations (see also \cite{Me}). For the quadratic dissipation in the Hilbert space setting, such perturbation problems have been intensively studied by many authors (e.g,~\cite{YO},~\cite{Ot1,Ot2}, and~\cite{SIYK98}).

The variational approach which we shall apply to \eqref{target equation} is based on the so-called \textit{Weighted Energy-Dissipation} (WED) functional. This approach consists of introducing a one-parameter family of WED functionals $I_\vep$ defined over entire trajectories, and proving that their minimizers converge, up to  subsequences, to \emph{strong}  solutions  of the target problem, as the approximation parameter $\vep$  approaches to zero. Our interest in the WED formalism lies in the fact that it  paves the way  to the application of general techniques of the calculus of variation (e.g., Direct Method, relaxation, the $\Gamma$-convergence) in the evolutionary setting. Moreover, the WED procedure also brings a new tool to reveal qualitative properties of solutions and comparison principles for evolutionary problems  \cite{Me2}. Furthermore, also in the present paper, this variational formulation brings us a useful technique to check the uniqueness of solutions and structural stability of unperturbed equations from the strict convexity of the WED functionals and $\Gamma$-convergence theory, respectively. Indeed, uniqueness of WED minimizer is used to define a (single-valued) \emph{solution operator}, which maps a prescribed function $v$ to the corresponding solution $u$ (cf.~for  (not regularized) doubly-nonlinear  flows, uniqueness of solution is delicate; indeed, in some cases, it is false (see, e.g.,~\cite{Colli} and~\cite{A-PJ})), and the structural stability implies the continuity of the solution operator, which will be required to apply a  fixed-point  theorem (see Section \ref{convex case} below for more details). Furthermore, the minimization problem provides more regular (in time) solutions, for the Euler-Lagrange equation associated with the WED functional corresponds to an elliptic-in-time regularization of the target problem. The elliptic-regularization approach to evolution equations has to be traced back at least to~\cite{Li} and~\cite{Ol} (see also~\cite{Li-Ma}). The idea of the WED functional approach has already been used in~\cite{Il} and~\cite{Hi}. Later, it has been reconsidered by Mielke and Ortiz~\cite{Mi-Or} for rate-independent equations, by Mielke and Stefanelli~\cite{Mi-St} for gradient flows with $\lambda$-convex potentials, and by Akagi and Stefanelli for non($\lambda$-) convex gradient flows~\cite{Ak-St} and  doubly-nonlinear  problems~\cite{AMS, Ak-St1,Ak-St2,Ak-St3}. Finally, the WED approach to  nonpotential  perturbations of gradient flows of nonconvex energies was recently developed in~\cite{Me}.

Elliptic-in-time regularization is one of well-established methods and has been widely employed in various fields including numerical analysis and control theory. Indeed, it provides us more regular approximation of solutions and more choices of methods to tackle the target equation (for instance, methods for elliptic PDEs are also available for parabolic and hyperbolic PDEs). On the other hand, for severely nonlinear evolution equations, there still remain many fundamental open issues such as existence of \emph{strong} (i.e., twice differentiable) solutions for elliptic-in-time regularized equations and the convergence of such approximate solutions to a solution of the target equation. Indeed, applications of elliptic-in-time regularization are based on these fundamental hypotheses, and they should be mathematically justified for each equation. One of main purposes of this paper is to propose a general theory to guarantee the fundamental hypotheses of elliptic-in-time regularization for a wider class of dissipative evolution equations as well as to extend the theory of~\cite{Ak-St},~\cite{Ak-St2},~\cite{Ak-St3} and~\cite{Me} to  nonpotential  perturbation problems for  doubly-nonlinear  flows of nonconvex energies.

The main result of this paper ensures that a solution to (\ref{target equation})-(\ref{IC}) is obtained as the limit of solutions $u_\vep$ to an elliptic-in-time regularization ${\rm (P)}_\vep$ given by
\begin{gather*}
-\varepsilon \dfrac \d {\d t} \mathrm{d}_{V}\psi(u^{\prime}) + \mathrm{d}_{V}\psi(u^{\prime}) + \partial \phi(u)-f(u)\ni 0 \quad \text{ a.e. in
}(0,T)\text{,}\\
u(0)=u_{0}\text{, } \quad \mathrm{d}_{V}\psi(u^{\prime}(T))=0%
\end{gather*}
as $\vep \to 0$ (to be precise, we shall treat a \emph{weak formulation} of ${\rm (P)}_\vep$. See Definition \ref{def sol eu} below). Problems ${\rm (P)}_\vep$ will be tackled by combining the minimization of WED functionals and a  fixed-point  argument. In particular, if $f(u)$ is replaced by $w := f(v)$ with a prescribed function $v$,  we prove existence of solutions to the corresponding problem by minimization of a suitably defined WED functional (cf.~Section \ref{section wed minimization}). We also note that, although uniqueness for the regularized unperturbed problem is not known, the minimization problem features uniqueness of solutions due to the strict convexity of the WED functional, and moreover, it also guarantees a continuous dependence of the solution $u = u(t)$ on the prescribed function $v = v(t)$.

The paper is organized as follows. In Section \ref{assumptions}, we set up notation, enlist our assumptions and state our main results. In order to simplify the argument, we first prove our results only for convex energy functionals (namely, the case $\varphi^{2}=0$) in Section \ref{convex case}. Secondly, we extend the result of  Section  \ref{convex case} to general nonconvex energies in Section \ref{nonconvex energies}. In Section \ref{section wed minimization}, as a by-product, we shall develop
a variational characterization based on WED functionals for  doubly-nonlinear  flows of nonconvex energies (i.e., (P) with $f=0$). In Section \ref{second approach}, we briefly sketch a second fixed-point argument, which allows us to work under
slightly different assumptions on the  nonpotential  term $f$. Finally, Section
\ref{applications} concerns some applications of the preceding abstract theory to concrete PDEs.

\section{Main results\label{assumptions}}

Let $V$ be a uniformly convex Banach space with norm $|\cdot|_{V}$ and duality
pairing $\left\langle \cdot,\cdot\right\rangle _{V}$, and  let $\left(
V^{\ast},|\cdot|_{V^{\ast}}\right)$ be its dual space such  that $V^{\ast}$ is also uniformly convex. Here it would be noteworthy that we do not assume the presence of a pivot (Hilbert) space $H \equiv H^*$ between $V$ and $V^*$ due to our variational approach. The pivot space $H$ is often required, if \eqref{target equation} is treated by virtue of approximation techniques (in $H$) such as Yosida approximation. Moreover, in applications to nonlinear PDEs, such an abstract framework based on the Gel'fand triplet $V \hookrightarrow H \equiv H^* \hookrightarrow V^*$ may impose additional assumptions on the PDEs (cf. see~\cite{Ak} and also Remark \ref{R:triplet}). 

Let $X$ be a reflexive Banach space with norm $|\cdot|_{X}$ and
duality pairing $\left\langle \cdot,\cdot\right\rangle _{X}$. Suppose that%
\[
X\hookrightarrow V\text{ and }V^{\ast}\hookrightarrow X^{\ast}%
\]
with compact densely-defined canonical injections. Let $\psi:V\rightarrow
\lbrack0,\infty)$ be a G\^{a}teaux differentiable convex functional and  let $\varphi^{1},\varphi^{2}:V\rightarrow \lbrack0,\infty]$ be two proper, lower semicontinuous, and convex functionals whose \emph{effective domains} (i.e., the set of $u$ for which $\varphi^i(u) < +\infty$) are  denoted  by $D(\varphi^{1})$ and $D(\varphi^{2})$, respectively. Let $p\in
(1,\infty)$ and $m\in(1,\infty)$ be fixed and assume the following:

\begin{description}
\item[(A1)] There exists $C_{1}>0$ such that
\begin{equation}
|u|_{V}^{p}\leq C_{1}(\psi(u)+1) \label{coercivity psi}%
\end{equation}
$\ $\ for all $u\in V$;
\item[(A2)] There exists $C_{2}>0$ such that
\begin{equation}
|\mathrm{d}_{V}\psi(u)|_{V^{\ast}}^{p^{\prime}}\leq C_{2}(|u|_{V}^{p}+1)
\label{p growth psi 2}
\end{equation}
for all $u\in V$, where $\mathrm{d}_{V}\psi:V \to V^*$ denotes the G\^{a}teaux
derivative of $\psi$;
\item[(A3)] There exists a positive and nondecreasing function $\ell_{3}$ on $[0,\infty)$ such that
\begin{equation}
|u|_{X}^{m}\leq \ell_{3}(|u|_V) (\varphi^{1}(u)+1) \label{coercivity phi}%
\end{equation}
for all $u\in D(\varphi^{1});$

\item[(A4)] Let $\varphi_{X}^{1}:X\rightarrow\lbrack0,\infty)$ be the restriction of $\varphi^{1} : V \to [0,\infty]$ onto $X$ and let $\partial_{X}\varphi_{X}^{1} : X \to X^*$ be its subdifferential. There exists a  nondecreasing  function $\ell_{4}(\cdot)$ on  $[0,\infty)$ such that%
\[
|\eta^{1}|_{X^{\ast}}^{m^{\prime}}\leq \ell_4(|u|_V) (|u|_{X}^{m}+1)
\]
for all  $u \in D(\varphi_X^1 )$, $\eta^{1} \in\partial_{X}\varphi_{X}^{1}(u)$;

\item[(A5)] Let $f:V\rightarrow V^{\ast}$ be  continuous and such that
\begin{equation}
|f(u)|_{V^{\ast}}^{p^{\prime}}\leq C_{5}\left(  |u|_{V}^{p}+1\right)
\label{growth f}%
\end{equation}
for all $u \in V$ and for some positive constant $C_{5}$ (independent of $u$). 

\item[(A6)] $D(\varphi^{1}) \subset D(\partial_{V}\varphi^{2})$. Moreover, there
exist\ constants $k\in\lbrack0,1),~C_{6}>0$, and a nondecreasing function
 $\ell_7$ on $[0,\infty)$  such that
\begin{equation}
\varphi^{2}(u)\leq k\varphi^{1}(u)+C_{6} \left( |u|_V^p + 1 \right) \label{phi2 bound}%
\end{equation}
for all $u\in D(\varphi^{1})$, and
\begin{equation}
|\eta^{2}|_{V^{\ast}}^{p^{\prime}}\leq \ell_7 (|u|_{V})(\varphi^{1}(u)+1)
\label{cond}%
\end{equation}
for all $u\in D(\varphi^{1}), \eta^{2}\in\partial_{V}\varphi
^{2}(u)$.  We now define the energy potential $\phi:V\to (-\infty,\infty]$ by 
$\phi(u)=\varphi^{1}(u)-\varphi^{2}(u)$ for all $u\in D(\varphi_1)$ and $\phi(u)=\infty$ for all $u\in V\setminus D(\varphi_1)$.   
\end{description}

As a consequence of \textbf{(A1)-(A4)} there exist constants  $C_{i}$,
$ i \in \{8,9\}$ and nondecreasing functions $\ell_{10}(\cdot)$, $\ell_{11}(\cdot)$  in $\mathbb{R}$ such that
\begin{align}
|u|_{V}^{p}  &  \leq  C_{8} \left(  \left\langle \d_{V}\psi(u),u\right\rangle
_{V}+1\right)  \text{ for all } u\in V,\label{assumptions consequence}\\
\psi(u)  &  \leq  C_{9} \left(  |u|_{V}^{p}+1\right)  \text{ for all } u\in V,\\
|u|^{m}_{ X }  &  \leq  \ell_{10}(|u|_V) ( \left\langle \eta^{1},u\right\rangle _{X}+1) \text{ for all } u \in D(\partial_{X}\varphi_{X}^{1}),\eta^{1} \in\partial_{X}\varphi_{X}^{1}(u)\text{,}\label{A3'}\\
 \varphi^1 (u)  &  \leq  \ell_{11}(|u|_V) \left(  |u|_{X}^{m}+1\right)  \text{ for all } u\in
D(\varphi^{1})\text{.}%
\end{align}

Finally, we assume
\begin{equation}
u_{0}\in D(\varphi^{1}). \label{ic}%
\end{equation}

 \begin{remark} \label{remark assumptions f} {\rm
  \begin{enumerate}
\item[(i)] Under \textbf{(A3)} and \textbf{(A4)}, one can check $D(\varphi^1) = D(\varphi^1|_X) = X$. Hence due to~\cite[Proposition 2.2]{Ba}, $\varphi^1|_X$ turns out to be continuous from $X$ to $[0,\infty)$ (see Lemma \ref{L:X} in Appendix). Hence \eqref{ic} is equivalent to $u_0 \in X$.
   \item[(ii)] By \textbf{(A5)}, the mapping $u \mapsto f(u)$ from $L^p(0,T;V)$ into $L^{p'}(0,T;V^*)$ turns out to be continuous (see, e.g.,~\cite{Roubicek}).
   \item[(iii)] Main results of this paper can be extended to more general perturbations: indeed, we may assume instead of \textbf{(A5)} that $f$ satisfies \eqref{growth f} and $f$ is demicontinuous (i.e. strongly-weakly continuous) from $L^{p}(0,T;V)$ to $L^{p^{\prime}}(0,T;V^{\ast})$, i.e.,
\begin{equation}\label{f demicon}
 \left\{
  \begin{array}{l}
  \mbox{if }
   u_{n}\rightarrow u\text{ strongly in }L^{p}(0,T;V),\\
   \mbox{then } f(u_{n}) \to f(u)\text{ weakly in }L^{p^{\prime}}(0,T;V^{\ast})\text{,}
\end{array}
 \right.
\end{equation}
	       which is equivalent to the demiclosedness under \eqref{growth f},
and the following compactness condition of the mapping $f$ from $W^{1,p}(0,T;V)\cap L^{m}(0,T;X)$ to $L^{p^{\prime}}(0,T;V^{\ast})$:%
\begin{equation}
 \left\{
  \begin{array}{l}
  \text{if } (u_{n}) \text{ is bounded in } L^{m}(0,T;X) \text{ and in }W^{1,p}(0,T;V), \\
   \text{and } (f(u_{n})) \text{ is bounded in }L^{p^{\prime}}(0,T;V^{\ast })\text,\\
   \text{then there exists } f^{\ast} \in L^{p'}(0,T;V^*) \text{ such that, } \text{up to a subsequence, }\\
    f(u_{n})\rightarrow f^{\ast}\text{ strongly in }L^{p^{\prime}}(0,T;V^{\ast}).
  \end{array}
 \right.  \label{hemicontinuity f}%
\end{equation}
	       By \eqref{f demicon} as well as the Aubin-Lions-Simon compactness lemma, the limit $f^*$ is identified with $f(u)$.
\item[(iv)] Note that (constant-in-time) external forces can be considered by choosing the term $f$ to be independent of the variable $u$. We remark that our results can be derived with no major essential differences also for problems with time dependent external forces. More precisely, we can relax assumption \textbf{(A5)} by requiring that $f(u)=\tilde{f}(u)+g$, where $\tilde{f}$ satisfies \textbf{(A5)} and $g:(0,T)\to V^\ast$ belongs to the space $L^{p^\prime}(0,T;V^\ast)$. We refer the reader to \cite{Ak-St3} for the WED approach to (unperturbed) doubly-nonlinear systems with time-dependent external forces.
  \end{enumerate}
 }\end{remark}

Now, we are concerned with the Cauchy problem,
\begin{gather}
\mathrm{d}_{V}\psi(u^{\prime})+\eta^{1}-\eta^{2}-f(u)  =0 \  \mbox{ in } V^*  \text{ a.e. in }(0,T)\text{,}\label{nonconvex target equation}\\
\eta^{1} \in\partial _{ V } \varphi^{1}(u)\text{,} \quad \eta^{2} \in\partial_{ V } \varphi
^{2}(u),\\
u(0)   =u_{0}\text{.} \label{ic nc}%
\end{gather}
Before stating the main results, let us give a definition of strong
solutions to (\ref{nonconvex target equation})-(\ref{ic nc}).

\begin{definition}
\label{solution to nonconvex target}A function $u\in C\left(  [0,T];V\right)
$ is  said to be  a \emph{strong solution}\ of \emph{(\ref{nonconvex target equation}%
)-(\ref{ic nc}) }if the following  conditions  are satisfied\/{\rm :}

\begin{description}
\item[(i)] $u\in L^{m}(0,T;X)\cap W^{1,p}(0,T;V)$, $\mathrm{d}_{V}%
\psi(u^{\prime})\in L^{p^{\prime}}(0,T;V^{\ast})$,

\item[(ii)] there exist $\eta^{1},\eta^{2}\in L^{p^{\prime}}(0,T;V^{\ast})$
such that $\eta^{1}\in\partial_{V}\varphi^{1}(u)$ and\ $\eta^{2}\in
\partial_{V}\varphi^{2}(u)$  a.e.~in $(0,T)$,  

\item[(iii)] $\mathrm{d}_{V}\psi(u^{\prime})+\eta^{1}-\eta^{2}-f(u)=0$ in
$V^{\ast}$ a.e.~in $(0,T)$, and $u(0)=u_{0}$.
\end{description}
\end{definition}

 Concerning the elliptic-in-time regularization of (\ref{nonconvex target equation})-(\ref{ic nc}), we shall treat the following weak formulation,
\begin{gather}
-\varepsilon\xi^{\prime}+\xi+\eta^{1}-\eta^{2}-f(u)=0 \  \mbox{ in } X^*  \ \text{ a.e. in
}(0,T)\text{,}\label{nonconvex euler eq}\\
\xi=\mathrm{d}_{V}\psi(u^{\prime})\text{, } \quad \eta^{1}\in\partial_{X}\varphi
_{X}^{1}(u)\text{, } \quad \eta^{2}\in\partial_{V}\varphi^{2}(u),\\
u(0)=u_{0}\text{, } \quad \xi(T)=0\text{.} \label{nc el ic}%
\end{gather}
Here, we are concerned with strong solutions of (\ref{nonconvex euler eq})-(\ref{nc el ic}) defined as follows: 
\begin{definition}
\label{def sol eu}A function $u\in C([0,T];V)$ is said to be a \emph{strong
solution} for \emph{(\ref{nonconvex euler eq})-(\ref{nc el ic})} if it
satisfies the following conditions\/{\rm :}

\begin{description}
\item[(i)] $u\in L^{m}(0,T;X)\cap W^{1,p}\left(  0,T;V\right)$,
$\xi=\mathrm{d}_{V}\psi(u)\in L^{p^{\prime}}\left(  0,T;V^{\ast}\right)$,
and $\xi^{\prime}\in L^{m^{\prime}}(0,T;X^{\ast})+L^{p^{\prime}}(0,T;V^{\ast
})$,

\item[(ii)] there exist $\eta^{1}\in L^{m^{\prime}}(0,T;X^{\ast})$, $\eta
^{2}\in L^{p^{\prime}}(0,T;V^{\ast})$ such that $\eta^{1}\in\partial
	   _{X}\varphi_{X}^{1}(u)$,\ $\eta^{2}\in\partial_{V}\varphi^{2}(u)$ and  the following holds true\/{\rm :}
	   \begin{gather*}
	    -\varepsilon\xi^{\prime}+\xi+\eta^{1}-\eta^{2}-f(u)=0 \ \text{ in } X^{\ast} \ \text{ a.e.~in } (0,T),\\
	    u(0)=u_{0}, \quad \xi(T)=0.
	   \end{gather*}  
\end{description}
\end{definition}
 We start with the case of convex energy functionals,  namely $\varphi^{2}=0$. Our first result reads,

\begin{theorem}
\label{thm convex}Let assumptions \emph{\textbf{(A1)}-\textbf{(A5)}} and
\emph{(\ref{ic}) }be satisfied with $\varphi^{2}=0$. Then, there exists
$\varepsilon_{0}>0$ such that for every $\varepsilon\in\left(  0,\varepsilon
_{0}\right)  $ the  elliptic-in-time regularization  \emph{(\ref{nonconvex euler eq})-(\ref{nc el ic})} admits  strong  solutions $u_{\varepsilon}$ in the sense of Definition
\emph{\ref{def sol eu}}. Moreover, there exists a sequence $\varepsilon
_{n}\rightarrow0$ such that $u_{\varepsilon_{n}}\rightarrow u$ weakly in
$L^{m}(0,T;X)\cap W^{1,p}\left(  0,T;V\right)  $ and strongly in $C\left(
[0,T];V\right)  $ and the limit $u$ solves  the target equation  
\emph{(\ref{nonconvex target equation})-(\ref{ic nc})} in the sense of
Definition \emph{\ref{solution to nonconvex target}.}
\end{theorem}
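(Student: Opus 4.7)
The plan is to prove Theorem \ref{thm convex} in three stages: (i) solve the unperturbed regularized problem by minimizing a suitable WED functional, which defines a single-valued solution map $S$; (ii) obtain $u_\varepsilon$ as a fixed point of $S$; (iii) send $\varepsilon \to 0$ to recover a solution of the target problem.

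First, for a fixed datum $v \in L^p(0,T;V)$, I would introduce the WED functional
\begin{equation*}
I_\varepsilon^v(u) = \int_0^T e^{-t/\varepsilon} \left[ \varepsilon \psi(u'(t)) + \varphi^1(u(t)) - \langle f(v(t)), u(t) \rangle_V \right] \d t
\end{equation*}
on the affine space $\mathcal{K}_{u_0} = \{ u \in W^{1,p}(0,T;V) \cap L^m(0,T;X) : u(0) = u_0 \}$. Using \textbf{(A1)}, \textbf{(A3)}, \textbf{(A5)} and the Young inequality, $I_\varepsilon^v$ is coercive on $\mathcal{K}_{u_0}$; it is strictly convex (since $\psi$ is strictly convex on $V$ modulo constants by uniform convexity of $V$, and $\varphi^1$ is convex, so $I_\varepsilon^v$ is strictly convex in $u'$); and it is weakly lower semicontinuous. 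Hence the Direct Method yields a unique minimizer $u = S(v)$. A standard computation of the first variation, exploiting \textbf{(A2)}, \textbf{(A4)}, \eqref{ic} and the natural boundary condition at $t=T$ arising from the exponential weight, shows that $S(v)$ is the unique strong solution (in the sense of Definition \ref{def sol eu}, with $f(u)$ replaced by $f(v)$) of the Euler--Lagrange equation $-\varepsilon \xi' + \xi + \eta^1 = f(v)$ with $u(0)=u_0$, $\xi(T)=0$.

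Next, I would perform the fixed-point step. Testing the Euler--Lagrange equation with $u'$ (or using the minimality of $u$ against the competitor $u_0$), combined with \eqref{assumptions consequence}, \textbf{(A3)}, \textbf{(A5)} and Gronwall, yields a bound of the form
\begin{equation*}
\|S(v)\|_{W^{1,p}(0,T;V)} + \|S(v)\|_{L^m(0,T;X)} \leq C\bigl(\varepsilon, u_0, \|v\|_{L^p(0,T;V)}\bigr).
\end{equation*}
Choosing $\varepsilon_0$ small enough (to absorb the factor coming from $|f(v)|_{V^*}^{p'} \lesssim |v|_V^p + 1$ against the coercivity of $\psi$), one shows that a sufficiently large closed ball $B_R \subset L^p(0,T;V)$ is mapped into itself by $S$. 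The bound in $W^{1,p}(0,T;V) \cap L^m(0,T;X)$ and the compact embedding $X \hookrightarrow V$ together with the Aubin--Lions--Simon lemma show that $S(B_R)$ is relatively compact in $L^p(0,T;V)$ (in fact in $C([0,T];V)$). Continuity of $S$ on $B_R$ follows from the variational characterization: given $v_n \to v$ in $L^p(0,T;V)$, Remark \ref{remark assumptions f}(ii) gives $f(v_n) \to f(v)$ in $L^{p'}(0,T;V^*)$, so $I_\varepsilon^{v_n}$ $\Gamma$-converges to $I_\varepsilon^{v}$ (the quadratic-type part is unchanged and the linear term is continuous), and by uniform coercivity the minimizers converge to the unique minimizer of the limit functional. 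Schauder's fixed-point theorem then produces $u_\varepsilon = S(u_\varepsilon)$, which is a strong solution of the regularized problem in the sense of Definition \ref{def sol eu}.

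Finally, I would pass to the limit $\varepsilon \to 0$. Testing the regularized equation by $u_\varepsilon'$ and integrating over $(0,t)$, the term $-\varepsilon \xi' \cdot u_\varepsilon'$ contributes $\varepsilon[\psi(u_\varepsilon'(0)) - \psi(u_\varepsilon'(T))]$ up to a nonnegative dissipative term (using convexity of $\psi$ and $\xi(T) = 0$), and the $\varphi^1$-term gives the difference $\varphi^1(u_\varepsilon(t)) - \varphi^1(u_0)$; combining with \textbf{(A1)}, \textbf{(A3)}, \textbf{(A5)} and Young's inequality produces $\varepsilon$-uniform bounds on $u_\varepsilon$ in $W^{1,p}(0,T;V) \cap L^m(0,T;X)$, on $\xi_\varepsilon = \mathrm{d}_V \psi(u_\varepsilon')$ in $L^{p'}(0,T;V^*)$, on $\eta^1_\varepsilon$ in $L^{m'}(0,T;X^*)$, and on $\varepsilon \xi_\varepsilon'$ in $L^{m'}(0,T;X^*) + L^{p'}(0,T;V^*)$. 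By Aubin--Lions--Simon and reflexivity, along a subsequence $u_{\varepsilon_n} \to u$ strongly in $C([0,T];V)$ and weakly in $L^m(0,T;X) \cap W^{1,p}(0,T;V)$, while $\xi_{\varepsilon_n} \rightharpoonup \xi$, $\eta^1_{\varepsilon_n} \rightharpoonup \eta^1$; moreover $\varepsilon_n \xi_{\varepsilon_n}' \to 0$ in the distributional sense by the uniform bound. Strong continuity of $f$ identifies $f(u_{\varepsilon_n}) \to f(u)$ in $L^{p'}(0,T;V^*)$, and maximal monotonicity of $\mathrm{d}_V \psi$ and $\partial \varphi^1$ (via the standard $\limsup$ inequality obtained by testing the limit equation and the $\varepsilon$-equation) identifies $\xi = \mathrm{d}_V \psi(u')$ and $\eta^1 \in \partial \varphi^1(u)$. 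The initial condition $u(0)=u_0$ is preserved by uniform convergence in $C([0,T];V)$.

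The main obstacle I expect is the passage to the limit $\varepsilon \to 0$: deriving $\varepsilon$-uniform a priori estimates requires a careful test (the obvious test by $u_\varepsilon'$ must be combined with the boundary condition $\xi_\varepsilon(T)=0$ to kill the bad sign of $-\varepsilon \xi_\varepsilon'$), and identifying the weak limits of the nonlinear maximal monotone terms $\mathrm{d}_V\psi(u_\varepsilon')$ and $\eta^1_\varepsilon$ demands a sharp energy inequality, since $u_\varepsilon'$ converges only weakly. The fixed-point step is subtler than routine because it requires smallness of $\varepsilon_0$ to ensure the self-map property, a smallness that must then be respected in the passage to the limit.
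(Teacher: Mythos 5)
Your overall architecture (WED minimization defining a solution map $S$, a fixed point of $S$ for small $\varepsilon$, then the causal limit $\varepsilon\to0$) is the same as the paper's, but the fixed-point step contains a genuine gap. You invoke Schauder's theorem on a large invariant ball $B_R\subset L^p(0,T;V)$, claiming that $\varepsilon$-smallness makes $S(B_R)\subset B_R$. The available a priori estimate (cf.\ \eqref{basic estimate}--\eqref{a priori estimate}) has the form $\|S(v)\|_{W^{1,p}(0,T;V)}^p\leq C\bigl(1+\|v\|_{L^p(0,T;V)}^p\bigr)$, where $C$ comes from the coercivity constants in \textbf{(A1)}--\textbf{(A2)}, the constant $C_5$ in \eqref{growth f}, $T$ and Gronwall factors, and is \emph{not} improved by taking $\varepsilon$ small: the smallness of $\varepsilon$ is spent exclusively on absorbing the term $\varepsilon\psi^{\ast}(\xi(t))$ through \eqref{fenchel estimate}, and has no effect on the coefficient multiplying $\int_0^T|v|_V^p$. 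Since in general $C>1$, there is no $R$ with $C(1+R^p)\leq R^p$, so the ball-invariance cannot be inferred and Schauder is not applicable as you set it up. This is precisely why the paper uses Schaefer's theorem (Theorem \ref{schaefer}): continuity and compactness of $S$ plus boundedness of the set $\{v: v=\alpha S(v),\ \alpha\in[0,1]\}$, and the latter bound is obtained by exploiting that any such $v$ coincides with the solution $u$ of the regularized equation with right-hand side $f(\alpha u)$, so that \eqref{growth f} lets the forcing be absorbed by Gronwall in terms of $u$ itself, uniformly in $\alpha$ and $\varepsilon$. Your argument can be repaired by switching to this Leray--Schauder-type scheme, but as written the invariance claim is a step that would fail.

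Two further points. First, your justification of uniqueness of the WED minimizer is not valid: uniform convexity of $V$ does not make the convex functional $\psi$ strictly convex (assumptions \textbf{(A1)}--\textbf{(A2)} allow $\psi$ to be affine along rays), and uniqueness is not cosmetic here --- it is what makes $S$ single-valued and is used again in the continuity step, where $\Gamma$-convergence only identifies limits of minimizers if the limit functional has a unique minimizer (this gives whole-sequence convergence). The paper secures this by noting that one may assume $\varphi^1$ strictly convex without loss of generality (adding a strictly convex lower-order term and compensating elsewhere). Second, the derivation of the Euler--Lagrange system with the regularity required by Definition \ref{def sol eu} (in particular $\eta^1\in L^{m'}(0,T;X^{\ast})$ and $\xi'\in L^{m'}(0,T;X^{\ast})+L^{p'}(0,T;V^{\ast})$) is not a routine first-variation computation; the paper imports it from \cite{Ak-St3}. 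Your causal-limit stage otherwise follows the paper's scheme (uniform estimates using $\xi_\varepsilon(T)=0$, Aubin--Lions--Simon, limsup/monotonicity identification of $\xi$ and $\eta^1$), except that the boundary term produced by testing with $u_\varepsilon'$ involves $\psi^{\ast}(\xi_\varepsilon)$ via the chain rule $\langle\xi_\varepsilon',u_\varepsilon'\rangle_V=\frac{\mathrm{d}}{\mathrm{d}t}\psi^{\ast}(\xi_\varepsilon)$, not $\psi(u_\varepsilon')$ as you wrote, and the identification of $\xi=\mathrm{d}_V\psi(u')$ requires the energy inequality \eqref{formula} rather than only a formal monotonicity argument.
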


A proof will be given in Section \ref{convex case}.  Moreover, the assertion of Theorem \ref{thm convex} will be extended to nonconvex energy functionals  $\phi=\varphi^{1}-\varphi^{2}$ for $\varphi^{2}$ satisfying \textbf{(A6)}.
More precisely, we have the following:

\begin{theorem}
\label{full gen theorem}Let assumptions \emph{\textbf{(A1)}-\textbf{(A6)}} and
\emph{(\ref{ic}) }be satisfied. Then, the assertion of Theorem
\emph{\ref{thm convex}} holds true.
\end{theorem}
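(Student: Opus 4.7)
The plan is to reduce the nonconvex case to the convex case already settled by Theorem \ref{thm convex} by treating the concave contribution $-\partial_V\varphi^2$ as part of an enlarged nonpotential perturbation. Given a prescribed trajectory $v \in W^{1,p}(0,T;V) \cap L^m(0,T;X)$, I pick a measurable selection $\eta^2(v)(t) \in \partial_V\varphi^2(v(t))$ and set $\tilde f(v) := f(v) + \eta^2(v)$. By (A5) and (\ref{cond}) this map sends bounded sets of $W^{1,p}(0,T;V) \cap L^m(0,T;X)$ into bounded sets of $L^{p'}(0,T;V^*)$, and a routine argument (Aubin--Lions compactness, demiclosedness of the maximal monotone graph $\partial_V\varphi^2$, continuity of $f$) shows that $\tilde f$ satisfies the demicontinuity/compactness pair \eqref{f demicon}--\eqref{hemicontinuity f}. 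Invoking the extended form of Theorem \ref{thm convex} outlined in Remark \ref{remark assumptions f}(iii) with perturbation $\tilde f(v)$ and energy $\varphi^1$, I obtain, for all sufficiently small $\varepsilon$, a strong solution $u_\varepsilon = S_\varepsilon(v)$ of the regularized convex problem with the frozen perturbation $\tilde f(v)$.

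The next step is a Schauder fixed-point argument for $S_\varepsilon$ on a closed ball $K$ of $W^{1,p}(0,T;V) \cap L^m(0,T;X)$, viewed as a convex subset of $C([0,T];V)$, relatively compact there by Aubin--Lions. The a priori estimates needed for $S_\varepsilon(K)\subset K$ are obtained by testing the Euler--Lagrange equation with $u_\varepsilon'$ and $u_\varepsilon$: the dominance bound \eqref{phi2 bound} with $k<1$ lets me absorb $\varphi^2(u_\varepsilon)$ into $\varphi^1(u_\varepsilon)$ (possibly shrinking the $\varepsilon$-threshold supplied by Theorem \ref{thm convex}), while \eqref{cond} controls $\eta^2(v)$ in $L^{p'}(0,T;V^*)$. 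Continuity of $S_\varepsilon$ with respect to the $C([0,T];V)$-topology follows from the structural-stability/uniqueness of the WED minimization proved in Section \ref{convex case} together with the demicontinuity of $\tilde f$. Schauder then yields a fixed point $u_\varepsilon=S_\varepsilon(u_\varepsilon)$, which is a strong solution of \eqref{nonconvex euler eq}--\eqref{nc el ic} in the sense of Definition \ref{def sol eu}.

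For the final limit $\varepsilon \to 0$, the a priori estimates are uniform in $\varepsilon$, so up to a subsequence $u_\varepsilon \rightharpoonup u$ weakly in $L^m(0,T;X) \cap W^{1,p}(0,T;V)$ and strongly in $C([0,T];V)$, while $\varepsilon \xi_\varepsilon'\to 0$ in $X^*+V^*$ thanks to the natural $O(\varepsilon^{1/2})$-bound produced by the energy identity of the regularized equation. The weak limits of $\xi_\varepsilon$, $\eta^1_\varepsilon$, $\eta^2_\varepsilon$ are then identified with selections of $\mathrm{d}_V\psi(u')$, $\partial_X\varphi^1_X(u)$, $\partial_V\varphi^2(u)$ by a standard Minty/maximal-monotonicity argument that exploits the strong convergence in $C([0,T];V)$ and the continuity of $f$; finally, the inclusion $\eta^1 \in \partial_V\varphi^1(u)$ (rather than merely $\partial_X\varphi^1_X(u)$) follows a posteriori from $\eta^1 = f(u) + \eta^2 - \mathrm{d}_V\psi(u') \in L^{p'}(0,T;V^*)$, which lifts the subdifferential inclusion from $X$ to $V$.

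The main obstacle is the interaction between the multi-valuedness of $\partial_V\varphi^2$ and the fixed-point machinery: a pointwise selection $v \mapsto \eta^2(v)$ is not continuous in any strong sense, which is precisely why the argument must be run in the demicontinuous/compact framework of Remark \ref{remark assumptions f}(iii) rather than under the original (A5). A secondary technical nuisance is that the smallness threshold $\varepsilon_0$ supplied by Theorem \ref{thm convex} may a priori depend on the size of $\tilde f(v)$; one must track the constants carefully in the convex-case estimates to confirm that a single $\varepsilon_0>0$ works uniformly over the invariant ball $K$, so that the fixed-point iteration is well-posed throughout.
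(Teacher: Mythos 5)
There is a genuine gap at the heart of your reduction. You freeze a measurable selection $\eta^2(v)(t)\in\partial_V\varphi^2(v(t))$ and claim that $\tilde f(v)=f(v)+\eta^2(v)$ satisfies the demicontinuity/compactness pair \eqref{f demicon}--\eqref{hemicontinuity f} ``by a routine argument''. This is false when $\partial_V\varphi^2$ is genuinely multivalued: if $v_n\to v$ strongly, demiclosedness of the maximal monotone graph only tells you that \emph{subsequential weak limits} of $\eta^2(v_n)$ lie in the set $\partial_V\varphi^2(v)$; nothing forces them to coincide with the particular selection $\eta^2(v)$ you fixed beforehand, and different subsequences may converge to different elements. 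Hence $v\mapsto\tilde f(v)$ is not demicontinuous, and the strong $L^{p'}$-convergence required in \eqref{hemicontinuity f} is even further out of reach (\eqref{cond} only yields boundedness). Since the continuity of your map $S_\varepsilon$ rests entirely on the (demi)continuity of $v\mapsto\tilde f(v)$, the Schauder step collapses. This is precisely the obstruction the paper circumvents by a different device: it regularizes $\varphi^2$ by its Moreau--Yosida approximation $\varphi^2_\lambda$, whose derivative $\partial_V\varphi^2_\lambda$ is single-valued, satisfies \eqref{growth f}, and is demicontinuous (Lemma \ref{demicontinuity yosida}), so that Proposition \ref{main teo} applies directly with $f$ replaced by $f+\partial_V\varphi^2_\lambda$; the price is an extra, nontrivial passage to the limit $\lambda\to0$ (uniform estimates via \textbf{(A6)}, identification of $\eta^2_\varepsilon$, $\eta^1_\varepsilon$ and $\xi_\varepsilon$ through monotonicity, Lebesgue-point and integration-by-parts arguments, plus an energy inequality kept for the causal limit), which your proposal skips entirely.

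A second, related inconsistency: in your ball-invariance estimate you invoke \eqref{phi2 bound} with $k<1$ to ``absorb $\varphi^2(u_\varepsilon)$ into $\varphi^1(u_\varepsilon)$'', but in your scheme the concave contribution enters only through the \emph{frozen} term $\eta^2(v)$, so after testing with $u_\varepsilon'$ the quantity to be controlled involves $\varphi^1(v)$ (via \eqref{cond}), not $\varphi^2(u_\varepsilon)$, and the smallness $k<1$ gives you nothing; with the nondecreasing functions $\ell_7,\ell_{11}$ allowed to grow arbitrarily, a self-mapped ball need not exist. The absorption mechanism only works when $\varphi^2$ (or $\varphi^2_\lambda$) is evaluated along the \emph{same} trajectory, through the chain rule $\tfrac{\d}{\d t}\varphi^2_\lambda(u_{\varepsilon,\lambda})$, which is exactly how Section \ref{nonconvex energies} closes the estimates uniformly in $\lambda$ and $\varepsilon$. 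Your final $\varepsilon\to0$ paragraph is in the spirit of the paper, but it presupposes the $\varepsilon$-level existence that the flawed fixed-point step was meant to provide.
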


A proof of Theorem \ref{full gen theorem}, which will be presented in
Section \ref{nonconvex energies}, is based on an approximation  of subdifferential operators to reduce the problem into a convex energy case  and an application of Theorem \ref{thm convex} (to be more precise, Proposition \ref{main teo} below).  

\section{ Convex energies: Proof of Theorem \ref{thm convex}  \label{convex case}}

In this section, we  treat only convex energies, i.e., $\varphi^{2}=0$,  and prove Theorem \ref{thm convex}. 
We start  by  showing
existence of  strong  solutions to  the elliptic-in-time regularized  equation,%
\begin{gather}
-\varepsilon\xi^{\prime}+\xi+\eta  =f(u),\label{Eu}\\
\xi =\mathrm{d}_{V}\psi(u^{\prime})\text{,} \quad \eta \in\partial_{X}\phi_{X}(u)\text{,}\\
u(0)   =u_{0}, \quad  \xi(T) = 0  \label{Eu3}%
\end{gather}
for $\varepsilon>0$ small enough. The strategy of our proof  relies on  a variational technique based on the minimization of WED functionals (see~\cite{Ak-St3} and~\cite{Ak-St2})  as well as  a fixed-point argument.

\subsection{A fixed-point argument\label{section fixed point}}

Let us define the map $S:L^{p}(0,T;V)\rightarrow L^{p}(0,T;V)$ by
\[
S:v\mapsto w:= f(v) \longmapsto u\text{,}%
\]
where $u$ is the  unique  global minimizer of the WED functional $I_{\varepsilon,w}:L^{p} (0,T;V)\rightarrow(-\infty,\infty]$ defined by%
\begin{equation}
I_{\varepsilon,w}(u)=\left\{
\begin{array}
[c]{cl}%
\int_{0}^{T}\exp(-t/\varepsilon)(\varepsilon\psi(u^{\prime})+\phi
(u)-\left\langle w,u\right\rangle _{V}) & \text{if }\ u\in K(u_{0})\cap
L^{m}(0,T;X)\text{,}\\
\infty & \text{ otherwise, }%
\end{array}
\right.  \label{wed funct}%
\end{equation}
over the set $K(u_{0}):=\{u\in W^{1,p}\left(  0,T;V\right)  :u(0)=u_{0}\}$.
For the well-posedness of the map $S$ (namely,  existence and uniqueness  of a minimizer $u=\arg\min I_{\varepsilon,w}$), we  employ the following fact (see~\cite[Theorem 5.1]{Ak-St3}): 

 \begin{theorem}
\label{min th}Let $w\in L^{p^{\prime}}(0,T;V^{\ast})$ and
\textbf{\emph{(A1)-(A4)}} and \eqref{ic} be satisfied with $\varphi^2 \equiv 0$. Then, for all $\varepsilon>0$, the WED functional $I_{\varepsilon,w}$ defined by \emph{(\ref{wed funct})} admits at least one minimizer $u_{\varepsilon}$  such that 
\begin{align}
u_{\varepsilon}  &  \in L^{m}(0,T;X)\cap W^{1,p}\left(  0,T;V\right)
\text{,}\nonumber\\
\xi_{\varepsilon}  &  =\mathrm{d}_{V}\psi(u_{\varepsilon})\in L^{p^{\prime}%
}\left(  0,T;V^{\ast}\right)  \text{ and }\ \xi_{\varepsilon}^{\prime}\in
L^{m^{\prime}}(0,T;X^{\ast})+L^{p^{\prime}}(0,T;V^{\ast}).\nonumber
\end{align}
Furthermore, there exists $\eta_{\varepsilon}\in L^{m^{\prime}}(0,T;X^{\ast})$
such that $\eta_{\varepsilon}\in\partial_{X}\phi_{X}(u_{\varepsilon})$ and
$(u_{\varepsilon},\xi_{\varepsilon},\eta_{\varepsilon})$ satisfies%
\begin{gather}
-\varepsilon\xi_{\varepsilon}^{\prime}+\xi_{\varepsilon}+\eta_{\varepsilon}
 =w\text{ in }X^{\ast}\text{ a.e.~in }(0,T)\text{,}%
\label{fix point Euler}\\
u_{\varepsilon}(0) =u_{0}, \quad \xi_{\varepsilon
}(T)=0\text{.} \label{fix point ic}%
\end{gather}
In addition, if $\phi$ is strictly convex, then the minimizer of $I_{\vep,w}$ is unique. 
 \end{theorem}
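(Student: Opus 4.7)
The plan is to apply the direct method of the calculus of variations to produce a minimizer of $I_{\varepsilon,w}$, then derive the Euler-Lagrange system via a first-variation argument, and finally deduce uniqueness from strict convexity when applicable.

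\textbf{Existence of a minimizer.} First I would check that $I_{\varepsilon,w}$ is proper: evaluating at the constant trajectory $u\equiv u_0$ gives a finite value, since $u_0\in D(\varphi^1)\subset X$ by Remark \ref{remark assumptions f}(i). Coercivity on $K(u_0)\cap L^m(0,T;X)\cap W^{1,p}(0,T;V)$ follows by combining the positivity of $e^{-t/\varepsilon}$ on $[0,T]$ with \textbf{(A1)} and \textbf{(A3)}: the $\varepsilon\psi(u')$ contribution gives $\|u'\|_{L^p(0,T;V)}$-control, and together with the initial condition $u(0)=u_0$ this produces a uniform $L^\infty(0,T;V)$-bound on $u$; using the nondecreasing profile $\ell_3$ in \textbf{(A3)}, the $\varphi^1(u)$ contribution then controls $\|u\|_{L^m(0,T;X)}$, while the linear term $\langle w,u\rangle_V$ is absorbed via H\"older and Young inequalities against the other two. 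Weak lower semicontinuity along minimizing sequences is standard: $\psi$ is convex and continuous on $V$, $\varphi^1$ is convex and lower semicontinuous on $V$, and integral functionals of convex normal integrands are sequentially weakly lower semicontinuous (Ioffe's theorem). Weak closedness of $K(u_0)$ in $W^{1,p}(0,T;V)\cap L^m(0,T;X)$ then produces a minimizer $u_\varepsilon$ with the claimed regularity.

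\textbf{Euler-Lagrange system.} For admissible variations $v\in W^{1,p}(0,T;V)\cap L^m(0,T;X)$ with $v(0)=0$, minimality together with Gâteaux differentiability of $\psi$ yields
\begin{equation*}
\int_0^T e^{-t/\varepsilon}\bigl(\varepsilon\langle \mathrm{d}_V\psi(u_\varepsilon'),v'\rangle_V + \langle \eta_\varepsilon,v\rangle_X - \langle w,v\rangle_V\bigr)\, \mathrm{d}t = 0,
\end{equation*}
where $\eta_\varepsilon$ is a measurable selection of $\partial_X\varphi^1_X(u_\varepsilon)$ obtained from the directional derivative of $\varphi^1$; note that $\varphi^1$ must be differentiated in the $X$-subdifferential sense, because by Remark \ref{remark assumptions f}(i), $\varphi^1|_X$ is continuous on $X$ (and hence subdifferentiable there) but may fail to be subdifferentiable on $V$. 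Integration by parts in time together with the free endpoint at $t=T$ produces the natural boundary condition $\xi_\varepsilon(T)=0$ and the weak formulation \eqref{fix point Euler}. The regularity $\xi_\varepsilon\in L^{p'}(0,T;V^*)$ is then immediate from \textbf{(A2)}, $\eta_\varepsilon\in L^{m'}(0,T;X^*)$ from \textbf{(A4)} applied pointwise in time, and $\xi_\varepsilon'\in L^{m'}(0,T;X^*)+L^{p'}(0,T;V^*)$ is read off from the equation itself.

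\textbf{Main obstacle and uniqueness.} The most delicate step is the rigorous derivation of the Euler-Lagrange inclusion: since $\varphi^1$ is merely convex and lower semicontinuous, one-sided first variations only yield a variational inequality, and upgrading it to an equality requires a measurable selection $\eta_\varepsilon$ of the multi-valued map $t\mapsto \partial_X\varphi^1_X(u_\varepsilon(t))$ with the correct $L^{m'}(0,T;X^*)$-integrability; this is precisely where \textbf{(A3)}--\textbf{(A4)} enter, providing the growth bound on $\partial_X\varphi^1_X$ that ensures such a selection exists and belongs to the right dual space. Uniqueness under strict convexity of $\phi=\varphi^1$ is then immediate: the integrand $e^{-t/\varepsilon}(\varepsilon\psi(u')+\phi(u)-\langle w,u\rangle_V)$ is strictly convex jointly in $(u,u')$, so $I_{\varepsilon,w}$ is strictly convex on its effective domain and the minimizer is unique.
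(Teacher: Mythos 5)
Note first that the paper does not prove this statement at all: Theorem \ref{min th} is imported verbatim as a known fact from \cite[Theorem 5.1]{Ak-St3}, so your sketch has to be measured against the proof given there. Your existence step (direct method: properness at $u\equiv u_0$, coercivity from \textbf{(A1)}, \textbf{(A3)} and the $L^\infty(0,T;V)$ bound coming from $u(0)=u_0$, weak lower semicontinuity by convexity, weak closedness of $K(u_0)$) is sound and is essentially how the cited reference begins, and the uniqueness claim is fine in substance, although your phrasing is off: the integrand is \emph{not} strictly convex jointly in $(u,u')$ (no strictness is assumed for $\psi$); uniqueness follows because two distinct trajectories must differ in the $u$-slot on a set of positive measure, where strict convexity of $\phi$ applies.

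The genuine gap is the Euler--Lagrange step, which is precisely the content that makes the theorem nontrivial. From minimality and one-sided variations you only get a variational inequality involving difference quotients of $\varphi^1$; the passage to the identity
\[
\int_0^T e^{-t/\varepsilon}\bigl(\varepsilon\langle \xi_\varepsilon,v'\rangle_V+\langle \eta_\varepsilon,v\rangle_X-\langle w,v\rangle_V\bigr)\,\d t=0
\quad\text{with a \emph{single} } \eta_\varepsilon(t)\in\partial_X\varphi^1_X(u_\varepsilon(t)) \text{ a.e.}
\]
is not a measurable-selection issue, and \textbf{(A3)}--\textbf{(A4)} do not by themselves supply it. The growth bound in \textbf{(A4)} controls the size of \emph{any} selection, but you must produce a selection that satisfies the stationarity identity for all admissible $v$ simultaneously; functional-analytically this is a sum rule plus a pointwise representation of the subdifferential of $u\mapsto\int_0^T e^{-t/\varepsilon}\varphi^1(u)\,\d t$ on the intersection space $K(u_0)\cap W^{1,p}(0,T;V)\cap L^m(0,T;X)$, whose dual elements split as sums in $L^{m'}(0,T;X^*)$ plus a $W^{1,p}$-type dual and need not act pointwise through $\partial_X\varphi^1_X$ for a nonsmooth $\varphi^1$. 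In \cite{Ak-St2,Ak-St3} this is exactly where the work lies: the Euler--Lagrange system, the regularity $\xi_\varepsilon'\in L^{m'}(0,T;X^{\ast})+L^{p'}(0,T;V^{\ast})$ and the natural condition $\xi_\varepsilon(T)=0$ are obtained by approximating the functional (adding coercive regularizing terms), writing the Euler--Lagrange equations of the approximations, deriving a priori estimates from \textbf{(A1)}--\textbf{(A4)}, and identifying the limits via maximal monotonicity and an integration-by-parts formula on the intersection space. Your sketch asserts the conclusion of this argument rather than providing it; also note that your final "read off $\xi_\varepsilon'$ from the equation" presupposes the identity with a pointwise selection, so the regularity of $\xi_\varepsilon'$ cannot be invoked before that identification is made.
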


 \begin{remark}{\rm
Note that  it is not restrictive to assume the strict convexity of $\phi=\varphi^{1}$.  Indeed, given $\varphi^{1}$ and $\varphi^{2}$ satisfying assumptions
\textbf{(A3)-(A4)}, \textbf{(A6)}, we define
$\tilde{\varphi}^{1}$ and $\tilde{\varphi}^{2}$ by
\begin{align*}
\tilde{\varphi}^{1}(u)  &  =\varphi^{1}(u)+|u|_{V}^{m-\delta}\text{,}\\
\tilde{\varphi}^{2}(u)  &  =\varphi^{2}(u)+|u|_{V}^{m-\delta}\text{}%
\end{align*}
for all $u\in V$ and some $\delta\in(0,m-1)$. Note that $\tilde{\varphi}^{1}$
and $\tilde{\varphi}^{2}$ satisfy assumptions \textbf{(A3)-(A4)},
\textbf{(A6)} and $\phi=\varphi^{1}-\varphi^{2}=\tilde{\varphi}%
^{1}-\tilde{\varphi}^{2}$. Moreover, $\tilde{\varphi}^{1}$ is strictly convex.
 }
\end{remark}

 The goal of this subsection  is now to prove that $S$ has a fixed point. More precisely, we  shall  prove
the following:

\begin{proposition}
\label{main teo}Let assumptions \emph{\textbf{(A1)}-\textbf{(A4)}} and \eqref{ic} be satisfied with $\varphi^2 \equiv 0$. Let $f$ satisfy condition \emph{(\ref{growth f})} and assume that $f$ is demicontinuous from $L^{p}(0,T;V)$ to $L^{p^{\prime}}(0,T;V^{\ast})$, i.e.,
\begin{equation}
 \left.
  \begin{array}{l}
  \mbox{if }
   u_{n}\rightarrow u\text{ strongly in }L^{p}(0,T;V),\\
   \mbox{then } f(u_{n}) \to f(u)\text{ weakly in }L^{p^{\prime}}(0,T;V^{\ast})\text{.}
\end{array}
 \right\}
\label{demicontinuity f}%
\end{equation}
Then, there exists $\varepsilon_{0}>0$ such
that for all $\varepsilon\in(0,\varepsilon_{0})$ the map $S$ has at least one
fixed point  $u_\varepsilon$. Moreover, such a fixed point is a strong solution  to the
 elliptic-in-time regularized  equation \emph{(\ref{Eu})-(\ref{Eu3})}.
\end{proposition}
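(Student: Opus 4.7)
The plan is to verify the hypotheses of Schauder's fixed-point theorem for the map $S$ on a closed, convex, bounded subset of $L^p(0,T;V)$. Thanks to the remark preceding the proposition, I may assume without loss of generality that $\phi = \varphi^1$ is strictly convex, so that by Theorem~\ref{min th} the minimizer $u = S(v)$ is unique and $S$ is well-defined and single-valued. The argument then has three stages: (i) an a priori estimate on $u = S(v)$ in terms of $\|v\|_{L^p(0,T;V)}$, (ii) invariance of a ball under $S$ together with compactness of its image, and (iii) continuity of $S$.

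For stage (i), fix $v \in L^p(0,T;V)$, set $w := f(v)$ (so by \textbf{(A5)} $\|w\|_{L^{p'}(V^*)}^{p'} \le C_5(\|v\|_{L^p(V)}^p + T)$), and let $(u,\xi,\eta)$ be provided by Theorem~\ref{min th}. The idea is to test the Euler--Lagrange equation \eqref{fix point Euler} by $u'$ (admissible since $u' \in L^p(V) \cap L^m(X)$ and $\xi' \in L^{m'}(X^*) + L^{p'}(V^*)$) and integrate over $(0,T)$. The key observation is that the term $-\varepsilon \int_0^T \langle \xi', u' \rangle_V \,\d t$ can be handled through the Fenchel identity $u' \in \partial \psi^*(\xi)$ and the convex chain rule: together with the boundary condition $\xi(T) = 0$, this integral equals $\varepsilon \psi^*(\xi(0)) \ge 0$ and can simply be dropped. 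Combining the Fenchel equality $\int_0^T \langle \xi, u' \rangle_V\,\d t = \int_0^T [\psi(u') + \psi^*(\xi)]\,\d t$ (which by \textbf{(A1)}--\textbf{(A2)} dominates $c\|u'\|_{L^p(V)}^p - C$), the convex chain rule $\int_0^T \langle \eta, u' \rangle_V\,\d t = \phi(u(T)) - \phi(u_0)$, and Young's inequality applied to $\int_0^T \langle w, u' \rangle_V\,\d t$ with the growth bound on $w$, together with $u(t) = u_0 + \int_0^t u'(s)\,\d s$, yields
\[
\|u\|_{W^{1,p}(0,T;V)}^p + \phi(u(T)) \le A + B\|v\|_{L^p(0,T;V)}^p,
\]
and \textbf{(A3)} promotes the bound on $\varphi^1(u)$ to one on $\|u\|_{L^m(0,T;X)}$.

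For stage (ii), I choose $\varepsilon_0 > 0$ small enough that the constants $A,B$ permit the existence of $R > 0$ with $S(B_R) \subset B_R$, where $B_R$ is the closed ball of radius $R$ in $L^p(0,T;V)$. By the a priori estimate, $S(B_R)$ is bounded in $W^{1,p}(0,T;V) \cap L^m(0,T;X)$, so by the compact embedding $X \hookrightarrow V$ together with the Aubin--Lions--Simon lemma, $S(B_R)$ is relatively compact in $L^p(0,T;V)$. For stage (iii), let $v_n \to v$ strongly in $L^p(V)$; by the demicontinuity assumption \eqref{demicontinuity f}, $w_n := f(v_n) \rightharpoonup w := f(v)$ weakly in $L^{p'}(V^*)$. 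The uniform bound on $u_n := S(v_n)$ yields a subsequence along which $u_n \rightharpoonup \tilde u$ weakly in $W^{1,p}(V) \cap L^m(X)$ and strongly in $L^p(V)$. Passing to the limit in the minimality inequality $I_{\varepsilon, w_n}(u_n) \le I_{\varepsilon, w_n}(z)$ for any $z \in K(u_0) \cap L^m(X)$---using weak lower semicontinuity of the convex part and strong-weak duality for the linear term $\int_0^T e^{-t/\varepsilon}\langle w_n, u_n - z\rangle_V\,\d t$---identifies $\tilde u$ as a minimizer of $I_{\varepsilon, w}$, and by uniqueness $\tilde u = S(v)$, so the whole sequence converges. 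Schauder's theorem then produces a fixed point $u_\varepsilon \in B_R$, and Theorem~\ref{min th} applied with $w = f(u_\varepsilon)$ guarantees $u_\varepsilon$ is a strong solution of \eqref{Eu}--\eqref{Eu3}.

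The principal obstacle I anticipate is stage (ii): the dependence of the constant $B$ on $\varepsilon$ must be tracked carefully to close the invariance $S(B_R) \subset B_R$ uniformly for $\varepsilon \in (0,\varepsilon_0)$, and this is precisely where the smallness restriction on $\varepsilon$ plays its role. A secondary delicate point, also crucial, is the identification $\tilde u = S(v)$ in the continuity argument, which depends on the strict convexity of $\phi$ (and hence of $I_{\varepsilon, w}$) to rule out alternative minimizers.
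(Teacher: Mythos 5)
Your stages (i) and (iii) essentially reproduce the paper's argument (a priori estimate by testing with $u'$, compactness via Aubin--Lions--Simon, continuity via uniqueness of the minimizer and passage to the limit in minimality, which is the paper's $\Gamma$-convergence step in different clothing). The genuine gap is exactly at the point you flag as the ``principal obstacle,'' stage (ii): the invariance $S(B_R)\subset B_R$ cannot be obtained from the available estimate, and shrinking $\varepsilon$ does not help. The estimate one gets (cf.\ \eqref{est1}--\eqref{est2}, \eqref{basic estimate}) is of the form $\|S(v)\|_{L^p(0,T;V)}^p\le A+B\,\|v\|_{L^p(0,T;V)}^p$, where $B$ is a structural constant built from $C_5$ in \eqref{growth f}, the constant in \eqref{assumptions consequence}, the Young-inequality constants and the Gronwall factor $e^{CT}$. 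It is independent of $\varepsilon$: the smallness of $\varepsilon$ enters only to absorb the term $C\varepsilon\int_0^T\psi^*(\xi)\le C\varepsilon\int_0^T|u'|_V^p+C\varepsilon$ (via \eqref{fenchel estimate}) into the left-hand side, and has no effect on the coefficient multiplying $\|v\|_{L^p(V)}^p$. Since in general $B\ge 1$, the inequality $A+BR^p\le R^p$ has no solution $R$, and no ball of $L^p(0,T;V)$ need be mapped into itself; Schauder's theorem on $B_R$ therefore does not close, and your plan to ``choose $\varepsilon_0$ small enough that $A,B$ permit the existence of $R$'' is not realizable.

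This is precisely why the paper invokes Schaefer's fixed-point theorem (Theorem \ref{schaefer}) rather than Schauder on an invariant ball: there one must bound the set $\{\bar v: \bar v=\alpha S(\bar v),\ \alpha\in[0,1]\}$, equivalently solutions of $v=S(\alpha v)$, where the minimizer $u$ coincides with $v$. The forcing then becomes $f(\alpha u)$, so the right-hand side of the energy estimate contains $C\int_0^t|u|_V^p$ with the unknown itself under the time integral, and Gronwall's lemma yields a bound independent of $\alpha$ (and uniform for $\varepsilon$ small) with no contraction-type requirement $B<1$. To repair your proof you should either switch to Schaefer/Leray--Schauder as in the paper, or exhibit some other convex, compact, $S$-invariant set, which your estimate does not provide. (A minor additional imprecision: $-\varepsilon\int_0^T\frac{\d}{\d t}\psi^*(\xi)=\varepsilon\psi^*(\xi(0))-\varepsilon\psi^*(0)$, and $\psi^*(\xi(0))\ge-\psi(0)$ need not be nonnegative unless $\psi(0)=0$; it is only bounded below, which suffices but should be stated as such.)
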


Here we remark that \eqref{demicontinuity f} is weaker than the continuity of $f : V \to V^*$. Moreover, the demiclosedness will be essentially required for the nonconvex energy case in Section \ref{nonconvex energies}.

To this end, we shall simply check several assumptions to apply the Schaefer
fixed-point theorem (see Theorem \ref{schaefer} in Appendix) to the map $S$. Our proof is
divided into several steps.

\subsubsection{A priori estimates.}\label{Sss:3.1.1}

We shall now derive some (uniform in $\varepsilon$) estimates for the
solution of (\ref{fix point Euler})-(\ref{fix point ic}). Throughout this
section, the symbols $C$ and $c$ will denote some positive constants
independent of $\varepsilon$ which may vary even within the same line.

Fix $\varepsilon>0$,  $v\in L^p(0,T;V)$, and $w=f(v)$  and let $u:=u_{\varepsilon}$  be  the solution to
(\ref{fix point Euler})-(\ref{fix point ic}) given by Theorem \ref{min th}. Since $u^{\prime}\in
D(\partial_{V}\psi)$ and $\xi\in\partial_{V}\psi(u^{\prime})$ (indeed 
$D(\partial_V\psi)=V$ and 
$\partial_V\psi(v)=\{{\rm{d}}_V\psi(v) \}$ for all $v\in V$), by defining the
\emph{Fenchel conjugate} $\psi^{\ast}$ of $\psi$ by
\[
\psi^{\ast}(v)=\sup_{w\in V}\{\left\langle v,w\right\rangle _{V}%
-\psi(w)\} \quad \text{ for } \ v\in V^{\ast}%
\]
and by using the \emph{Fenchel identity,}%
\[
\psi(w)+\psi^{\ast}(v)=\left\langle v,w\right\rangle _{V} \ \Leftrightarrow \ %
w\in\partial_{V^{\ast}}\psi^{\ast}(v) \ \Leftrightarrow \ v\in\partial_{V}%
\psi(w)\text{,}%
\]
we have $u^{\prime}\in\partial_{V^{\ast}}\psi^{\ast}(\xi)$. Thus,
$\left\langle \xi^{\prime},u^{\prime}\right\rangle_{V}=\frac{\mathrm{d}%
}{\mathrm{d}t}\psi^{\ast}(\xi)$ by a chain-rule for subdifferentials. Testing equation (\ref{fix point Euler}) with
$u^{\prime}$ and integrating  both sides  over $(0,t)$, one gets%
\begin{equation}
-\varepsilon\int_{0}^{t}\frac{\mathrm{d}}{\mathrm{d}t}\psi^{\ast}(\xi
)+\int_{0}^{t}\left\langle \xi,u^{\prime}\right\rangle _{V}+\int_{0}^{t}%
\frac{\mathrm{d}}{\mathrm{d}t}\phi(u)  =  \int_{0}^{t}\left\langle
f(v),u^{\prime}\right\rangle _{V}\text{.} \label{in}%
\end{equation}

\begin{remark}{\rm
\label{remark formal}The above argument is formal. A rigorous
derivation of (\ref{in}) can be found in~\cite{Ak-St1},~\cite{Ak-St2}, and~\cite{Ak-St3}. 
 }
\end{remark}

As a consequence of  assumptions  (\ref{assumptions consequence}) and (\ref{growth f}), it
follows that%
  \begin{align}
-\varepsilon\psi^{\ast}(\xi(t))+\varepsilon\psi^{\ast}(\xi(0))+c\int_{0}%
^{t}|u^{\prime}|_{V}^{p}+\phi\left(  u\left(  t\right)  \right)  -\phi\left(
  u_{0}\right) \label{u prime test fix v}\\
  \leq C+\frac{c}{2}\int_{0}^{t}|u^{\prime}|_{V}^{p}+C\int_{0}%
^{t}|v|_{V}^{p}\text{.}\nonumber 
  \end{align}
As $\phi$ and  $\psi^\ast$  are bounded from below, by $u_{0}\in D(\phi)$, we have%
\begin{equation}
\frac{c}{2}\int_{0}^{t}|u^{\prime}|_{V}^{p}+\phi\left(  u\left(  t\right)
\right)  \leq C+C\int_{0}^{t}|v|_{V}^{p}+\varepsilon\psi^{\ast}%
(\xi(t))\label{est1}
\end{equation}
and,  recalling $\xi\left(  T\right)  =0$,
\begin{equation}
\frac{c}{2}\int_{0}^{T}|u^{\prime}|_{V}^{p}+\phi\left(  u\left(  T\right)
\right)  \leq C+C\int_{0}^{T}|v|_{V}^{p}\text{.} \label{est2}%
\end{equation}
Note that 
\begin{equation*}
\int_{0}^{t}\frac{\mathrm{d}}{\mathrm{d}t}|u|_{V}^{p}=\int_{0}%
^{t}p|u|_{V}^{p-2} \langle F_V u, u' \rangle_V  \leq\frac{c}{2}\int_{0}^{t}|u^{\prime}|_{V}%
^{p}+C\int_{0}^{t}|u|_{V}^{p}, 
\end{equation*}
where $F_{V}:V\rightarrow V^{\ast}$ denotes the duality mapping between $V$ and $V^{\star}$. Hence, substituting it into (\ref{est1}),  we obtain 
\begin{equation}
|u(t)|_{V}^{p}+\phi\left(  u\left(  t\right)  \right)  \leq C+C\int_{0}%
^{t}|v|_{V}^{p}+C\int_{0}^{t}|u|_{V}^{p}+\varepsilon\psi^{\ast}(\xi
(t))\text{.} \label{estimate 1}%
\end{equation}
Applying Gronwall's lemma (cf. Lemma \ref{gronwall} in Appendix), one
gets
\[
|u(t)|_{V}^{p}\leq C+ C\int_0 ^t \left(|v|_{V}^{p}+ \int_{0}^{t}|v|_{V}^{p} \right) +\varepsilon\psi^{\ast}%
(\xi(t))+C\varepsilon \int_{0}^{t}\psi^{\ast}(\xi)\text{. }%
\]
By substituting it into (\ref{estimate 1}), integrating both sides\ over
$[0,T]$ and taking the sum with (\ref{est2}), we get
\begin{align}
\frac{c}{2}\int_{0}^{T}|u^{\prime}|_{V}^{p}+\int_{0}^{T}|u|_{V}^{p}%
+\phi\left(  u\left(  T\right)  \right)  +\int_{0}^{T}\phi\left(  u\right)
 \label{estimate 3}\\%
 \leq C+C\int_{0}^{T}|v|_{V}^{p}+C\varepsilon\int_{0}^{T}\psi^{\ast}(\xi).\nonumber
\end{align}
We now show that
\begin{equation}
\psi^{\ast}(\xi) \leq C|u^{\prime}|_{V}^{p}+C\text{.}
\label{fenchel estimate}%
\end{equation}
Indeed, by definition $\psi^{\ast}(\xi)=\sup_{w\in V}\left(  \left\langle
\xi,w\right\rangle_{ V } -\psi(w)\right)  $ and by using assumption  \textbf{(A1)},  for any $\delta>0$, 
one can take a constant $C_{\delta}>0$ such that 
\begin{align*}
\psi^{\ast}(\xi)  &  \leq\sup_{w\in V} \left\{|\xi|_{V^{\ast}}|w|_{V}+1-\frac
{1}{C_{1}}|w|_{V}^{p} \right\}\\
&  \leq\sup_{w\in V} \left\{C_{\delta}|\xi|_{V^{\ast}}^{p^{\prime}}+\delta
|w|_{V}^{p}+1-\frac{1}{C_{1}}|w|_{V}^{p} \right\}.
\end{align*}
Choosing $\delta=\frac{1}{C_{1}%
}$ and using  assumption  \textbf{(A2)}, we get
\begin{align*}
 \psi^{\ast}(\xi) \leq\sup_{w\in V}\{C|\xi|_{V^{\ast}}^{p^{\prime}}+1\}
  = C|\xi|_{V^{\ast}}^{p^{\prime}}+1\leq C|u^{\prime}|_{V}^{p}+C\text{. }%
\end{align*}
Thus, substituting it into (\ref{estimate 3}), we can choose $\varepsilon
=\varepsilon(T,\psi)$ (depending on $\psi$ and $T$, but not on $\phi$)
sufficiently small to obtain
\begin{equation}
\int_{0}^{T}|u|_{V}^{p}+\int_{0}^{T}|u^{\prime}|_{V}^{p}+\int_{0}^{T}%
\phi\left(  u\right)  \leq C+C\int_{0}^{T}|v|_{V}^{p}\text{.}
\label{basic estimate}%
\end{equation}
Therefore, $u$ is uniformly bounded in $W^{1,p}(0,T;V)$ and hence in $C([0,T];V)$ by 
$C+C\int_{0}^{T}|v|_{V}^{p}$.
As a consequence of assumption \textbf{(A3)}, we  obtain the following  estimate:
\begin{equation}
\left\Vert u\right\Vert _{W^{1,p}(0,T;V)}^{p}+\left\Vert u\right\Vert
_{L^{m}(0,T;X)}^{m}\leq C(1+ \ell_3 ( C+C \Vert v \Vert_{L^p(0,T;V)} ))(1+\int_{0}^{T}|v|_{V}^{p}). \label{a priori estimate}%
\end{equation}

 \begin{remark}
  {\rm
We can prove estimate (\ref{a priori estimate}) in
an easier way. Indeed, using the nonnegativity of $\phi$ and $u(0)=u_{0}\in V$,
we can deduce  from (\ref{est2})  that $\left\Vert u' \right\Vert_{L^p(0,T;V)}^{p} \leq C+C\int_{0}^{T}|v|_{V}^{p}$. By substituting (\ref{fenchel estimate}) into (\ref{est1}) and by integrating both sides over $\left(  0,T\right)$, we get $\int_0^T \phi(u)\leq C+C\int_{0}^{T}|v|_{V}^{p}$ and thus (\ref{a priori estimate}) by virtue of \textbf{(A3)}. However, the argument  starting from (\ref{estimate 1}) to derive  estimate (\ref{a priori estimate}) will be used later (see  (\ref{new estimate}) and (\ref{estimate *}) below).
  }
 \end{remark}

\subsubsection{The map  $S:L^{p}(0,T;V)\rightarrow L^{p}(0,T;V)$  is continuous.}\label{sec S cont}

We recall that $S$ is the composition of two maps: $S:v\mapsto w:= f(v)\longmapsto
u$. We notice that, as a consequence of (\ref{demicontinuity f}), the map $v\longmapsto f(v)$ is demicontinuous from $L^{p}(0,T;V)$ into $L^{p^{\prime}}(0,T;V^{\ast})$. Thus, we are only left to prove that the solution operator $w\mapsto u$ is weakly-strongly continuous from $L^{p^{\prime}}(0,T;V^{\ast})$ into $L^{p}(0,T;V)$. Let $\{w_{h}\}$ be a sequence  in $L^{p^{\prime}}(0,T;V^{\ast})$ such that $w_{h}\rightarrow w$ weakly in $L^{p^{\prime}}(0,T;V^{\ast})$  as  $h\rightarrow0$. Then, there exists $C$ independent of $h$ such that  $\left\Vert w_{h}\right\Vert _{L^{p^{\prime}}(0,T;V^{\ast})}\leq C$  and estimate (\ref{a priori estimate}) implies that the family $\{u_{h}=\operatorname{argmin} I_{\varepsilon,w_{h}}\}$  of minimizers  is uniformly bounded in $W^{1,p}(0,T;V)\cap L^{m}(0,T;X)\hookrightarrow \hookrightarrow L^{p}(0,T;V)$, and hence,  we deduce that,  up to a subsequence $k_{h}$, $u_{k_{h}}\rightarrow u$ strongly in $L^{p}(0,T;V)$. In order to identify the limit $u$, we prove that the corresponding WED functionals $I_{\varepsilon ,w_{h}}$ converge to $I_{\varepsilon,w}$ in the sense of $\Gamma$-convergence (see, e.g, \cite{DMa}). Indeed let $\{u_{h}\}\in L^{p}(0,T;V)$ be such that $u_{h}\rightarrow u$ strongly in $L^{p}(0,T;V)$. Then, as $\psi$ and $\phi$ are convex and l.s.c.,  we find that 
\begin{align*}
\liminf_{h\rightarrow0}I_{\varepsilon,w_{h}}(u_{h})  &  =\liminf
_{h\rightarrow0}\int_{0}^{T}\exp(-t/\varepsilon)\left(  \varepsilon\psi
(u_{h}^{\prime})+\phi(u_{h})-\left\langle w_{h},u_{h}\right\rangle _{V}\right)
\\
&  \geq 
 I_{\varepsilon,w}(u)\text{.} %
\end{align*}
As for the  existence of a  recovering sequence for each  $u\in K(u_{0})\cap L^{m}(0,T;X)$, we simply set $u_{h}  \equiv  u$. 
Then, one can immediately check that%
\[
\lim_{h\rightarrow0}I_{\varepsilon,w_{h}}(u)=I_{\varepsilon,w}(u).
\]
As a consequence of the $\Gamma$-convergence of the WED functionals  along with  the convergence $u_{k_{h}}\rightarrow u$ strongly in $L^{p}(0,T;V)$, we deduce
that $u$ minimizes $I_{\varepsilon,w}$. We recall that for every $w$ the
minimizer of $I_{\varepsilon,w}$ is unique (due to the strict convexity of
$I_{\varepsilon,w}$). Thus, the convergence holds for the whole sequence
$\{u_{h}\}$. This proves continuity of $S$.

\subsubsection{Compactness.}

We here prove  the compactness of  the map $S:L^{p}(0,T;V)\rightarrow L^{p}(0,T;V)$. Let $\left\{  v_{h}\right\}  \subset L^{p}(0,T;V)$ be a bounded
sequence and let  $u_{h}$ be the minimizer of $I_{\varepsilon,f(v_{h})}$ . Then, as a consequence of estimate\ (\ref{a priori estimate}), the family  $\{u_{h}\}$  is uniformly bounded in $W^{1,p}(0,T;V)\cap L^{m}(0,T;X)\hookrightarrow
\hookrightarrow L^{p}(0,T;V)$, and hence, up to a subsequence, $u_{h}%
\rightarrow u$ strongly in $L^{p}(0,T;V)$.

\subsubsection{Boundedness of $\{  \bar{v}  \in L^{p}(0,T;V):  \bar{v}  =\alpha S(  \bar{v}  )$ for $\alpha \in\lbrack0,1]\}$.}\label{Sss:Bounded}

 In order  to apply the Schaefer fixed-point theorem (see Theorem \ref{schaefer} in Appendix), 
we are only left to prove that the set $A:=\{ \bar{v}  \in L^{p}%
(0,T;V): \bar{v}  =\alpha S( \bar{v}  )$ for $\alpha\in\lbrack0,1]\}$ is bounded. Note that $A$ is bounded if and only if $\{  \bar{v}  \in L^{p}(0,T;V):  \bar{v}  /\alpha=S( \bar{v} )$ for $\alpha
\in(0,1]\}$ is bounded. Hence, we shall prove that $B=\{v\in L^{p}%
(0,T;V):v=S(\alpha v)$ for $\alpha\in(0,1]\}$ is bounded. This yields the
boundedness of $A$.

In case $B=\emptyset$, we immediately find that $A=\{0\}$, and hence, nothing remains  to be proved. In case $B\neq\emptyset$, let $v\in B$. Then, there exists $\alpha
\in(0,1]$ such that $v=S(\alpha v)$. Let $u$ be the minimizer of
$I_{\varepsilon,f(\alpha v)}$, i.e. $u=S(\alpha v)$ $(=v)$. Then, $u$ solves
\begin{equation}
-\varepsilon\xi^{\prime}+\xi+\eta=f(\alpha v)=f(\alpha u)\text{ in }X^{\ast
}\text{ \ a.e. in }\left(  0,T\right)  \text{,} \label{euler alpha}%
\end{equation}
where $\xi=\mathrm{d}_{V}\psi(u)$ and $\eta\in\partial_{X}\phi_{X}(u)$ a.e. in
$\left(  0,T\right)  $. We shall prove that solutions to (\ref{euler alpha})
are bounded in $L^{p}(0,T;V)$ uniformly in $\alpha$ for $\varepsilon$ small
enough. Testing (\ref{euler alpha}) with $u^{\prime}$ and integrating  both sides  over
$(0,t)$, we get
\[
-\varepsilon\int_{0}^{t}\frac{\mathrm{d}}{\mathrm{d}t}\psi^{\ast}(\xi
)+\int_{0}^{t}\left\langle \xi,u^{\prime}\right\rangle _{V}+\int_{0}^{t}%
\frac{\mathrm{d}}{\mathrm{d}t}\phi(u)=\int_{0}^{t}\left\langle f(\alpha
u),u^{\prime}\right\rangle _{V}\text{}%
\]
 (see also Remark \ref{remark formal}).
As a consequence of (\ref{assumptions consequence}) and \textbf{(A5)}, we get%
\begin{align}
\lefteqn{ -\varepsilon\psi^{\ast}(\xi(t))+\varepsilon\psi^{\ast}(\xi(0))+c\int
_{0}^{t}|u^{\prime}|_{V}^{p}+\phi\left(  u\left(  t\right)  \right)
-\phi\left(  u_{0}\right) \label{new estimate}}\\
 & \leq C+\frac{c}{2}\int_{0}^{t}|u^{\prime}|_{V}^{p}+C\int_{0}^{t}|f(\alpha
 u)|_{V^{\ast}}^{p^{\prime}}
 \nonumber\\
 &\leq C+\frac{c}{2}\int_{0}^{t}|u^{\prime}|_{V}^{p}+C\int_{0}^{t}|u|_{V}%
^{p}\text{.}  \nonumber
\end{align}
  Proceeding as in \S \ref{Sss:3.1.1}, one can particularly derive formula (\ref{estimate 1}) with $v=u$ and thus (for $\varepsilon>0$ small enough) formula (\ref{basic estimate})-(\ref{a priori estimate}) with $v=0$. In particular, $\left\Vert u\right\Vert _{L^{p}(0,T;V)}\leq C$, where $C$ does
not depend on $\varepsilon$ and $\alpha$.

Thanks to the Schaefer fixed-point theorem (see Theorem \ref{schaefer} in
Appendix), the map $S$ has a fixed point $u_\vep$ for $\varepsilon>0$ small enough.  Furthermore, the fixed point $u_\vep$ of $S$ solves  (\ref{Eu})-(\ref{Eu3}) and  satisfies  the relation,
\[
u_{\varepsilon}=\underset{  \tilde{u} \in K(u_{0})}{ \operatorname{argmin}}\int_{0}^{T}\exp(-t/\varepsilon
)(\varepsilon\psi( \tilde{u} ^{\prime})+\phi( \tilde{u} )-\left\langle f(u_{\varepsilon
}), \tilde{u} \right\rangle _{V})\text{. }%
\]
This completes the proof of Proposition \ref{main teo}.

\subsection{The causal limit\label{causal limit}}

We shall now prove that solutions  $u_\vep$ of the elliptic-in-time regularized equations  (\ref{Eu})-(\ref{Eu3}) converge, up to a subsequence, to  a solution of the target  equation
(\ref{nonconvex target equation})-(\ref{ic nc}), in case $\phi$ is convex
(i.e. $\varphi^{2}=0$). More precisely, we prove the following proposition:

\begin{proposition}
Assume  \textbf{\emph{(A1)-(A5)}}, \eqref{ic} and $\varphi^{2}=0$. For  each  $\varepsilon>0$ small enough, let $u_{\varepsilon}$ be a solution  of the elliptic-in-time regularized equation  \emph{(\ref{Eu})-(\ref{Eu3}).} Then, there exist a sequence $\varepsilon_{n}\rightarrow0$ and a limit $u\in L^{m}(0,T;X)\cap W^{1,p}(0,T;V)$ such that $u_{\varepsilon_{n}}\rightarrow u$ weakly in $L^{m}(0,T;X)\cap W^{1,p}(0,T;V)$ and strongly in $C([0,T];V)$ and  the limit  $u$ solves \emph{(\ref{nonconvex target equation})-(\ref{ic nc})}.
\end{proposition}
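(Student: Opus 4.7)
The plan is to pass to the $\vep_n \to 0$ limit along a suitable subsequence, using a priori estimates, the vanishing of the elliptic perturbation, and \emph{two separate} identification arguments for $\eta$ and $\xi$. Since $u_\vep$ is the fixed point of $S$, I first apply estimate \eqref{a priori estimate} with $v=u_\vep$ (exactly as in \S\ref{Sss:Bounded}) to obtain $\|u_\vep\|_{W^{1,p}(0,T;V)\cap L^m(0,T;X)}\le C$ uniformly in $\vep\in(0,\vep_0)$; combined with \textbf{(A2)} and \textbf{(A5)}, this yields uniform $L^{p'}(0,T;V^*)$-bounds on $\xi_\vep=\mathrm{d}_V\psi(u_\vep')$ and on $f(u_\vep)$, while equation \eqref{Eu} gives bounds on $\eta_\vep$ and $\vep\xi_\vep'$ in $L^{m'}(0,T;X^*)+L^{p'}(0,T;V^*)$. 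Via reflexivity and Aubin--Lions--Simon through the compact embedding $X\hookrightarrow\hookrightarrow V$, I extract $\vep_n\to 0$ along which $u_{\vep_n}\to u$ weakly in $L^m(0,T;X)\cap W^{1,p}(0,T;V)$ and strongly in $C([0,T];V)$, $\xi_{\vep_n}\to\xi$ and $\eta_{\vep_n}\to\eta$ weakly in the corresponding dual spaces, and $f(u_{\vep_n})\to f(u)$ \emph{strongly} in $L^{p'}(0,T;V^*)$ (via continuity of the Nemytski operator induced by $f$; cf.\ Remark \ref{remark assumptions f}(ii)); the condition $u(0)=u_0$ transfers by strong $C([0,T];V)$-convergence. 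Testing $\vep_n\xi_{\vep_n}'$ against arbitrary $\zeta\in C_c^\infty(0,T;X)$ and integrating by parts shows $\vep_n\xi_{\vep_n}'\to 0$ in $\mathcal{D}'(0,T;X^*)$, so passing to the limit in \eqref{Eu} forces $\eta=f(u)-\xi\in L^{p'}(0,T;V^*)$ and $\xi+\eta=f(u)$ in $V^*$ a.e.\ in $(0,T)$.

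To identify $\eta\in\partial_V\phi(u)$, I next test \eqref{Eu} against $u_{\vep_n}-u$. The crucial cancellation is that $u_{\vep_n}(0)=u_0=u(0)$ and $\xi_{\vep_n}(T)=0$ simultaneously eliminate the boundary terms produced by integration by parts of $\vep_n\xi_{\vep_n}'$; the resulting identity
\[
\vep_n\!\int_0^T\!\langle\xi_{\vep_n},u_{\vep_n}'-u'\rangle+\!\int_0^T\!\langle\xi_{\vep_n},u_{\vep_n}-u\rangle+\!\int_0^T\!\langle\eta_{\vep_n},u_{\vep_n}-u\rangle=\!\int_0^T\!\langle f(u_{\vep_n}),u_{\vep_n}-u\rangle
\]
has every term except the third tending to zero, using strong $C([0,T];V)$-convergence of $u_{\vep_n}-u$ against the uniformly bounded $\xi_{\vep_n}$ and the strongly convergent $f(u_{\vep_n})$, together with $\vep_n\to 0$. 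Hence $\int_0^T\langle\eta_{\vep_n},u_{\vep_n}\rangle\to\int_0^T\langle\eta,u\rangle$. Feeding this into the subdifferential inequality $\int_0^T\phi(u_{\vep_n})+\int_0^T\langle\eta_{\vep_n},w-u_{\vep_n}\rangle\le\int_0^T\phi(w)$ (valid for every $w\in L^m(0,T;X)$ by $\eta_{\vep_n}\in\partial_X\phi_X(u_{\vep_n})$), using weak lower semicontinuity of $\int_0^T\phi(\cdot)$ and localizing in $t$, I obtain $\phi(u(t))+\langle\eta(t),w-u(t)\rangle\le\phi(w)$ a.e.\ for $w\in X$; since $\phi\equiv+\infty$ on $V\setminus X$ (cf.\ Remark \ref{remark assumptions f}(i)) and $\eta(t)\in V^*$, the inequality extends trivially to $w\in V$, so $\eta(t)\in\partial_V\phi(u(t))$ a.e.

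Finally, with $\eta\in\partial_V\phi(u)$ in hand, the chain rule $\int_0^T\langle\eta,u'\rangle=\phi(u(T))-\phi(u_0)$ becomes available. Testing \eqref{Eu} against $u_{\vep_n}'$, and combining the Fenchel identity $\langle\xi_{\vep_n},u_{\vep_n}'\rangle=\psi(u_{\vep_n}')+\psi^*(\xi_{\vep_n})$ with the chain rule for $\eta_{\vep_n}$, one finds
\[
\int_0^T[\psi(u_{\vep_n}')+\psi^*(\xi_{\vep_n})]+\phi(u_{\vep_n}(T))-\phi(u_0)+\vep_n\psi^*(\xi_{\vep_n}(0))=\int_0^T\langle f(u_{\vep_n}),u_{\vep_n}'\rangle.
\]
Discarding the nonnegative $\vep_n\psi^*(\xi_{\vep_n}(0))$ term, taking limsup (the right-hand side converging by strong $f(u_{\vep_n})\to f(u)$ and weak $u_{\vep_n}'\to u'$), invoking weak lower semicontinuity of $\psi$, $\psi^*$ and of $\phi$ at $T$, and substituting the limit chain rule produces $\limsup_n\int_0^T\langle\xi_{\vep_n},u_{\vep_n}'\rangle\le\int_0^T\langle\xi,u'\rangle$; the pointwise Fenchel inequality $\psi+\psi^*\ge\langle\cdot,\cdot\rangle$ then forces equality throughout, whence $\xi=\mathrm{d}_V\psi(u')$ a.e. The main obstacle is precisely the ordering of these two identification steps: the chain rule indispensable for identifying $\xi$ becomes available only after $\eta\in\partial_V\phi(u)$ has been proved by a test-function argument against $u_{\vep_n}-u$ that crucially exploits the \emph{combined} initial condition $u_{\vep_n}(0)=u_0$ and terminal condition $\xi_{\vep_n}(T)=0$ built into the elliptic-in-time regularization to dispose of the boundary contributions of $\vep_n\xi_{\vep_n}'$ without yet knowing that this term vanishes strongly.
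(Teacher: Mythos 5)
Your argument is correct and follows essentially the same route as the paper: uniform estimates obtained by testing with $u_\vep'$ (repeating the fixed-point boundedness step with $\alpha=1$), Aubin--Lions--Simon compactness, identification of $\eta$ through the convergence of $\int_0^T\langle\eta_{\vep_n},u_{\vep_n}\rangle_X$ obtained by integrating the $\vep_n\xi_{\vep_n}'$ term by parts and exploiting $u_{\vep_n}(0)=u_0$, $\xi_{\vep_n}(T)=0$, and identification of $\xi$ via the energy inequality at $t=T$, the chain rule for the limit problem, and convexity. Your minor variants (testing with $u_{\vep_n}-u$ rather than $u_{\vep_n}$, and the Fenchel/lower-semicontinuity argument in place of the maximal-monotone limsup criterion via \cite{Ke}) are equivalent in substance, with the only caveat that the exact energy identity you write is formal and should be replaced, as in the paper, by the rigorously derived inequality of \cite{Ak-St3} and \cite{Ak-St2}, keeping the $O(\vep_n)$ terms involving $\psi(0)$ rather than discarding $\vep_n\psi^*(\xi_{\vep_n}(0))$ as nonnegative.
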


Our proof is divided into two steps.

\subsubsection{A priori uniform estimates.}

Testing equation (\ref{Eu}) with $u' =u^{\prime}_\vep$ and integrating it over $(0,t)$, we get
\[
-\varepsilon\int_{0}^{t}\frac{\mathrm{d}}{\mathrm{d}t}\psi^{\ast}%
(\xi_{\varepsilon})+\int_{0}^{t}\left\langle \xi_{\varepsilon},u_{\varepsilon
}^{\prime}\right\rangle _{V}+\int_{0}^{t}\frac{\mathrm{d}}{\mathrm{d}t}%
\phi(u_{\varepsilon})=\int_{0}^{t}\left\langle f(u_{\varepsilon}%
),u_{\varepsilon}^{\prime}\right\rangle _{V}\text{}%
\]
(see also Remark \ref{remark formal}). As a consequence of assumption (\ref{assumptions consequence}), we obtain%
\begin{align*}
& -\varepsilon\psi^{\ast}(\xi_{\varepsilon}(t))+\varepsilon\psi^{\ast}%
(\xi_{\varepsilon}(0))+\int_{0}^{t}|u_{\varepsilon}^{\prime}|_{V}^{p}%
+\phi\left(  u_{\varepsilon}\left(  t\right)  \right)  -\phi\left(
u_{0}\right)  \\
& \leq C+\frac{1}{2}\int_{0}^{t}|u_{\varepsilon}^{\prime}|_{V}%
^{p}+C\int_{0}^{t}|f(u_{\varepsilon})|_{V^{\ast}}^{p^{\prime}}.
\end{align*}
By repeating the same argument as in \S \ref{Sss:Bounded} (with $\alpha = 1$), we can obtain, for $\varepsilon>0$ sufficiently small,
the estimate,
\[
\left\Vert u_{\varepsilon}\right\Vert _{W^{1,p}(0,T;V)}+\left\Vert
u_{\varepsilon}\right\Vert _{L^{m}(0,T;X)}\leq C.
\]
Due to assumptions \textbf{(A1)-(A5),} we have
\begin{align*}
\left\Vert \eta_{\varepsilon}\right\Vert _{L^{m^{\prime}}(0,T;X^{\ast})}  &
\leq C,\\
\left\Vert \xi_{\varepsilon}\right\Vert _{L^{p^{\prime}}(0,T;V^{\ast})}  &
\leq C\text{,}\\
\left\Vert f(u_{\varepsilon})\right\Vert _{L^{p^{\prime}}(0,T;V^{\ast})}  &
\leq C\text{,}%
\end{align*}
and, by comparison  of each term  in (\ref{Eu}),
\[
\left\Vert \varepsilon\xi_{\varepsilon}^{\prime}\right\Vert _{L^{p^{\prime}%
}(0,T;V^{\ast})+L^{m^{\prime}}(0,T;X^{\ast})}\leq C\text{.}%
\]

\subsubsection{The passage to the limit.}

From the a priori estimate obtained above, we can derive, along some not-relabeled subsequence, the following convergences:%
\begin{align*}
\eta_{\varepsilon}  &  \rightarrow\eta\text{ weakly in }L^{m^{\prime}%
}(0,T;X^{\ast}),\\
\xi_{\varepsilon}  &  \rightarrow\xi\text{ weakly in }L^{p^{\prime}%
}(0,T;V^{\ast}),\\
\varepsilon\xi_{\varepsilon}^{\prime}  &  \rightarrow0\text{ weakly in
}L^{p^{\prime}}(0,T;V^{\ast})+L^{m^{\prime}}(0,T;X^{\ast}),\\
u_{\varepsilon}  &  \rightarrow u\text{ weakly in }W^{1,p}(0,T;V)\text{ and in
}L^{m}(0,T;X)\text{,}\\
f(u_{\varepsilon})  &  \rightarrow f^{\ast}\text{ weakly in }L^{p^{\prime}%
}(0,T;V^{\ast})\text{,}%
\end{align*}
and hence,  thanks to Aubin-Lions-Simon's compactness lemma  (see \cite{Si}),
\begin{equation}
u_{\varepsilon}\rightarrow u\text{ strongly in }C\left(  [0,T];V\right)
\text{.} \label{strong convergence u}%
\end{equation}
In particular,%
\[
u_{\varepsilon}(t)\rightarrow u(t)\text{ strongly in }V\text{ for all }%
t\in\lbrack0,T]
\]
and $u\left(  0\right)  =u_{0}$. As a consequence of  the continuity of $f:V \to V^*$ 
(see also Remark \ref{remark assumptions f}) and the strong convergence (\ref{strong convergence u}), 
we have $f^{\ast}=f(u)$ and
\begin{equation}
f(u_{\varepsilon})\rightarrow f(u)\text{ strongly in }L^{p^{\prime}%
}(0,T;V^{\ast})\text{.} \label{strong convergence f}%
\end{equation}
Thus,%
\begin{equation}
\xi+\eta-f\left(  u\right)  =0\text{ in }X^{\ast}\text{ a.e.\ in }[0,T].
\label{target eq}%
\end{equation}
From the final condition $\xi_{\varepsilon}(T)=0$ and the convergences above,
it follows that  
\[
- \left\langle \varepsilon\xi_{\varepsilon}(t),v\right\rangle _{X}=\left\langle
\int_{t}^{T} \varepsilon\xi_{\varepsilon}^{\prime}(s)\mathrm{d}s,v\right\rangle
_{X}= \int_{t}^{T} \left\langle \varepsilon\xi_{\varepsilon}^{\prime
}(s),v\right\rangle _{X}\mathrm{d}s\rightarrow0
\]
for all $v\in X$, which yields
\[
\varepsilon\xi_{\varepsilon}(t)\rightarrow0\text{ weakly in }X^{\ast}\text{
for each }t\in\lbrack0,T]\text{.}%
\]
We next verify $\eta(t)\in\partial_{V}\phi(u(t))~$for a.e. $t\in\left(
0,T\right)  $.  Since $\eta$ and $u$ entail sufficient regularity,  thanks to \cite[Proposition 2.1]{Ak-St2}, it is sufficient to show  a (weak) relation  $\eta(t)\in\partial_{X}\phi_{X}(u(t))$ for a.e.~$t\in\left(  0,T\right)
$. By comparison in (\ref{Eu}), integrating by parts (cf.~\cite{Ak-St2} for a rigorous proof 
of the integration-by-parts formula) and using the convergences  obtained so far,  we have
\begin{align*}
\int_{0}^{T}\left\langle \eta_{\varepsilon},u_{\varepsilon}\right\rangle _{X}
&  =\int_{0}^{T}\left\langle \varepsilon\xi_{\varepsilon}^{\prime
},u_{\varepsilon}\right\rangle _{X}-\int_{0}^{T}\left\langle \xi_{\varepsilon
},u_{\varepsilon}\right\rangle _{V}+\int_{0}^{T}\left\langle f(u_{\varepsilon
}),u_{\varepsilon}\right\rangle _{V}\\
&  =-\varepsilon\left\langle \xi_{\varepsilon}(0),u_{0}\right\rangle _{X}%
-\int_{0}^{T}\left\langle \varepsilon\xi_{\varepsilon},u_{\varepsilon}%
^{\prime}\right\rangle _{V}-\int_{0}^{T}\left\langle \xi_{\varepsilon
},u_{\varepsilon}\right\rangle _{V}+\int_{0}^{T}\left\langle f(u_{\varepsilon
}),u_{\varepsilon}\right\rangle _{V}\\
&  \rightarrow-\int_{0}^{T}\left\langle \xi,u\right\rangle _{V}+\int_{0}%
^{T}\left\langle f(u),u\right\rangle _{V}\text{.}%
\end{align*}
Hence, we particularly get
\[
\limsup_{\varepsilon\rightarrow0}\int_{0}^{T}\left\langle \eta_{\varepsilon
},u_{\varepsilon}\right\rangle _{X}\leq-\int_{0}^{T}\left\langle
\xi,u\right\rangle _{V}+\int_{0}^{T}\left\langle f(u),u\right\rangle _{V}%
=\int_{0}^{T}\left\langle \eta,u\right\rangle _{X}\text{.}%
\]
By the demiclosedness of the maximal monotone operator $\partial_{X}\phi_{X}$
and by applying \cite[Proposition 1.1]{Ke}, we conclude that $\eta(t)\in
\partial_{X}\phi_{X}(u(t))$ for a.e. $t\in\left(  0,T\right)  $. Let us
finally show that $\xi(t)=\mathrm{d}_{V}\psi(u^{\prime}(t))$ for almost all
$t\in\left(  0,T\right)  $. Combining \cite[Theorem 5.1]{Ak-St3} and
\cite[Theorem 3.3, Lemma A.1]{Ak-St2}, we deduce the following inequality%
\begin{equation}
\int_{0}^{t}\left\langle \xi_{\varepsilon},u_{\varepsilon}^{\prime
}\right\rangle _{V}\leq\varepsilon\left\langle \xi_{\varepsilon}%
(t),u_{\varepsilon}^{\prime}(t)\right\rangle _{V}+\varepsilon\psi
(0)-\phi(u_{\varepsilon}(t))+\phi(u_{0})+\int_{0}^{t}\left\langle
f(u_{\varepsilon}),u_{\varepsilon}^{\prime}\right\rangle _{V}, \label{formula}%
\end{equation}
which can be formally obtained by substituting identity (\ref{Eu}) into the
left-hand side of (\ref{formula}) and by integrating it by parts. Thus, from the
convergences  above (in particular,  (\ref{strong convergence f})), using the lower semicontinuity of $\phi$ and recalling that $\eta\in L^{p}(0,T;V)$ (by comparison  of each term  in (\ref{target eq})) and that $\xi_\varepsilon(T)=0$, we deduce that
\begin{align*}
\limsup_{\varepsilon\rightarrow0}\int_{0}^{T}\left\langle \xi_{\varepsilon
},u_{\varepsilon}^{\prime}\right\rangle _{V}  &  \leq\lim_{\varepsilon
\rightarrow0}\varepsilon\psi(0)-\liminf_{\varepsilon\rightarrow0}\phi\left(
u_{\varepsilon}(T)\right)  +\phi(u_{0})+\lim_{\varepsilon\rightarrow0}\int
_{0}^{T}\left\langle f(u_{\varepsilon}),u_{\varepsilon}^{\prime}\right\rangle
_{V}\\
&  \leq-\phi\left(  u\left(  T\right)  \right)  +\phi\left(  u_{0}\right)
+\int_{0}^{T}\left\langle f(u),u^{\prime}\right\rangle _{V}\\
&  =-\int_{0}^{T}\left\langle \eta,u^{\prime}\right\rangle _{V}+\int_{0}%
^{T}\left\langle f(u),u^{\prime}\right\rangle _{V}\\
&  =\int_{0}^{T}\left\langle \xi,u^{\prime}\right\rangle _{V}\text{. }%
\end{align*}
Thus, $\xi(t)=\mathrm{d}_{V}\psi(u^{\prime}(t))$ for a.e. $t\in\left(
0,T\right)  $, and  hence,  $u$ solves (\ref{nonconvex target equation})-(\ref{ic nc}) with $\eta_2 =0$.

\section{Nonconvex energies\label{nonconvex energies}: Proof of Theorem
\ref{full gen theorem}}

 This section is devoted to  the proof of  Theorem \ref{full gen theorem} for general nonconvex energy functionals. To this end, we shall employ the \emph{Moreau-Yosida regularization} for convex functionals (equivalently, the \emph{Yosida approximation} for subdifferentials) to approximate the target equation in order to reduce the problem to the convex energy setting of Proposition \ref{main teo}. Finally, we shall pass to the limit of approximated solutions and obtain a solution to the target equation.

\subsection{The Moreau-Yosida regularization.}

We first regularize equation (\ref{nonconvex euler eq}): for every $\lambda
>0$, we define the Moreau-Yosida regularization of $\varphi^{2}$ by
\[
\varphi_{\lambda}^{2}\left(  u\right)  =\min_{v\in V}\left(  \frac{\lambda}%
{p}\left\vert \frac{u-v}{\lambda}\right\vert _{V}^{p}+\varphi^{2}(v)\right)
=\frac{\lambda}{p}\left\vert \frac{u-J_{\lambda}u}{\lambda}\right\vert
_{V}^{p}+\varphi^{2}(J_{\lambda}u)\text{,}%
\]
where $J_{\lambda}$ denotes the resolvent for $\partial_{V}\varphi^{2}$ (see \eqref{def:reso} in Appendix), and consider the  following  approximate equations for
(\ref{nonconvex euler eq})-(\ref{nc el ic}):
\begin{gather}
-\varepsilon\xi_{\varepsilon,\lambda}^{\prime}+\xi_{\varepsilon,\lambda}%
+\eta_{\varepsilon,\lambda}^{1}-\eta_{\varepsilon,\lambda}^{2}%
-f(u_{\varepsilon,\lambda})=0\text{ in }X^{\ast}\text{ a.e. in }%
(0,T)\text{,}\label{regularized nonconv euler eq}\\
\xi_{\varepsilon,\lambda}= \mathrm{d}_{V}\psi(u_{\varepsilon,\lambda}%
^{\prime})\text{, } \quad \eta_{\varepsilon,\lambda}^{1}\in\partial_{X}\varphi
_{X}^{1}(u_{\varepsilon,\lambda})\text{,\ } \quad \eta_{\varepsilon,\lambda}^{2}=\partial_{V}\varphi_{\lambda}^{2}(u_{\varepsilon,\lambda}),\\
u_{\varepsilon,\lambda}(0)=u_{0}\text{, }\quad \xi_{\varepsilon,\lambda
}(T)=0\text{.} \label{regularized nonconvex euler eq ic}%
\end{gather}
Note that $\partial_{V}\varphi_{\lambda}^{2}(u):V\rightarrow V^{\ast}$ is
single-valued and demicontinuous, and it satisfies  assumption  (\ref{growth f}) (see \S \ref{yosida regularization} in Appendix for details). Moreover, the mapping $Q_\lambda: u \mapsto \partial_{V}\varphi_{\lambda}^{2}(u)$ from $L^p(0,T;V)$ to $L^{p'}(0,T;V^*)$ also entails the demicontinuity (\ref{demicontinuity f}). Indeed, let $u_n \to u$ strongly in $L^p(0,T;V)$ and fix $ v  \in L^p(0,T;V)$. Then by demicontinuity of $\partial_{V}\varphi_{\lambda}^{2}(\cdot)$ from $V$ into $V^*$, we see that $\langle \partial_{V}\varphi_{\lambda}^{2}(u_n(t)),  v(t)  \rangle_V \to \langle \partial_{V}\varphi_{\lambda}^{2}(u(t)),  v(t)  \rangle_V$ for a.e.~$t \in (0,T)$. By virtue of \eqref{growth f} with $f = \partial_V \varphi^2_\lambda$, thanks to Vitali's convergence theorem, the function $t \mapsto \langle \partial_{V}\varphi_{\lambda}^{2}(u(t)),  v(t)  \rangle_V$ turns out to be integrable on $(0,T)$, and moreover, $\langle \partial_{V}\varphi_{\lambda}^{2}(u_n(\cdot)),  v (\cdot) \rangle_V \to \langle \partial_{V}\varphi_{\lambda}^{2}(u(\cdot)),  v (\cdot) \rangle_V$ strongly in $L^1(0,T)$. Therefore the mapping $Q_\lambda$ is demicontinuous from $L^p(0,T;V)$ into $L^{p'}(0,T;V^*)$. Applying Proposition \ref{main teo} with $f$ replaced by $f+\partial_{V}\varphi
_{\lambda}^{2}$, we deduce existence of (at least) a solution $u_{\varepsilon
,\lambda}$ to (\ref{regularized nonconv euler eq}%
)-(\ref{regularized nonconvex euler eq ic}).

\subsection{Uniform estimates.}

In order to pass to the limit  of solutions  as $\lambda\rightarrow0$, and then, as $\varepsilon\rightarrow0$, we  first establish  some (uniform in $\lambda$ and $\varepsilon$) estimates. Hereafter, $C$ and $c$ will denote positive constants not depending on $\varepsilon$ and $\lambda$ which may vary even within the same line. Testing equation (\ref{regularized nonconv euler eq}) with $u_{\varepsilon,\lambda}^{\prime}$ and integrating  it  over $(0,t)$, we get
\begin{align*}
-\varepsilon\int_{0}^{t}\frac{\mathrm{d}}{\mathrm{d}t}\psi^{\ast}
(\xi_{\varepsilon,\lambda})+\int_{0}^{t}\left\langle \xi_{\varepsilon,\lambda
},u_{\varepsilon,\lambda}^{\prime}\right\rangle_{V}+\int_{0}^{t}
\frac{\mathrm{d}}{\mathrm{d}t}\varphi^{1}(u_{\varepsilon,\lambda})-\int_{0}^{t}\frac{\mathrm{d}}{\mathrm{d}t}\varphi_{\lambda}^{2}(u_{\varepsilon,\lambda}) \\
=\int_{0}^{t}\left\langle f(u_{\varepsilon,\lambda}),u_{\varepsilon
,\lambda}^{\prime}\right\rangle _{V}
\end{align*}
(see Remark \ref{remark formal}). By virtue of (\ref{assumptions consequence}), we estimate
\begin{align*}
 -\varepsilon\psi^{\ast}(\xi_{\varepsilon,\lambda}(t))+\varepsilon\psi
^{\ast}(\xi_{\varepsilon,\lambda}(0))+c\int_{0}^{t}|u_{\varepsilon,\lambda
}^{\prime}|_{V}^{p} +\varphi^{1}(u_{\varepsilon,\lambda}\left(  t\right)
)-\varphi_{\lambda}^{2}(u_{\varepsilon,\lambda}(t))\\
 \leq \varphi^{1}(u_{0})-\varphi_{\lambda}^{2}(u_{0})+ C+\frac{c}{2}\int_{0}^{t}|u_{\varepsilon,\lambda}^{\prime}|_{V}%
^{p}+C\int_{0}^{t}|f(u_{\varepsilon,\lambda})|_{V^{\ast}}^{p^{\prime}}\text{,}%
\end{align*}
and thus, by using  assumption  \textbf{(A6)}, %
 \begin{align}
-\varepsilon\psi^{\ast}(\xi_{\varepsilon,\lambda}(t))+\varepsilon\psi
^{\ast}(\xi_{\varepsilon,\lambda}(0))+c\int_{0}^{t}|u_{\varepsilon,\lambda
  }^{\prime}|_{V}^{p}+(1-k)\varphi^{1}(u_{\varepsilon,\lambda}(t))
 \label{estimate *}\\
  \leq \varphi^{1}(u_{0})-\varphi_{\lambda}^{2}(u_{0}) +C \left( |u_{\vep,\lambda}|_V^p + 1 \right) +\frac{c}{2}\int_{0}^{t}|u_{\varepsilon,\lambda}^{\prime}|_{V}%
^{p}+C\int_{0}^{t}|f(u_{\varepsilon,\lambda})|_{V^{\ast}}^{p^{\prime}}.\nonumber \end{align}
Here we note that, for any $\mu > 0$ and $w \in W^{1,p}(0,T;V)$,
\begin{align*}
 \mu \dfrac{\d}{\d t} |w|_V^p
 &= p \mu  |w|_V^{p-1} \dfrac{\d}{\d t} |w|_V\\
 &\leq p \mu |w|_V^{p-1} |w'|_V
 \leq (p-1) \mu^{p'} |w|_V^p + |w'|_{V}^p,
\end{align*}
which yields,  by integrating over $(0,t)$,
$$
\mu |w(t)|_V^p \leq \mu |w(0)|_V^p + (p-1) \mu^{p'} \int^t_0 |w|_V^p + \int^t_0 |w'|_V^p \ \mbox{ for } \ t \in [0,T].
$$
Substituting the inequality above  into  \eqref{estimate *} with $\mu > 0$ large enough, we obtain
 \begin{align}
  \lefteqn{
-\varepsilon\psi^{\ast}(\xi_{\varepsilon,\lambda}(t))+\varepsilon\psi
^{\ast}(\xi_{\varepsilon,\lambda}(0))+\dfrac c 4\int_{0}^{t}|u_{\varepsilon,\lambda
  }^{\prime}|_{V}^{p}+(1-k)\varphi^{1}(u_{\varepsilon,\lambda}(t))
  }\label{estimate * 2}\\
 &\leq \varphi^{1}(u_{0}) +C \left(|u_0|_V^p + 1 \right)+ C \int_{0}^{t}|u_{\varepsilon,\lambda}|_{V}^{p}+C\int_{0}^{t}|f(u_{\varepsilon,\lambda})|_{V^{\ast}}^{p^{\prime}}\nonumber\\
 &\stackrel{\eqref{growth f}}\leq \varphi^{1}(u_{0}) +C \left(|u_0|_V^p + 1 \right)+ C \left( \int_{0}^{t}|u_{\varepsilon,\lambda}|_{V}^{p}+1\right).\nonumber
 \end{align}
Repeating the same argument as in the last section, we obtain (for $\varepsilon$ small enough)
 \[
 \int^T_0 |u_{\vep,\lambda}|_V^p + \int_{0}^{T}|u_{\varepsilon,\lambda}^{\prime}|_{V}^{p}+\int_{0}^{T}\varphi
^{1}\left(  u_{\varepsilon,\lambda}\right)  \leq C,
\]
and thus%
\begin{align}
\left\Vert u_{\varepsilon,\lambda}\right\Vert _{W^{1,p}(0,T;V)}+\left\Vert
u_{\varepsilon,\lambda}\right\Vert _{L^{m}(0,T;X)}  &  \leq C,\\
\left\Vert \eta_{\varepsilon,\lambda}^{1}\right\Vert _{L^{m^{\prime}%
}(0,T;X^{\ast})}  &  \leq C,\\
\left\Vert \xi_{\varepsilon,\lambda}\right\Vert _{L^{p^{\prime}}(0,T;V^{\ast
})}  &  \leq C\text{,}\label{est xi lambda}\\
\left\Vert f(u_{\varepsilon,\lambda})\right\Vert _{L^{p^{\prime}}(0,T;V^{\ast
})}  &  \leq C\text{.}%
\end{align}
 In particular, 
$$
\left\Vert u_{\varepsilon,\lambda}\right\Vert _{C([0,T];V)}  \leq C\text{.}
$$ 
Substituting $t=T$ into  estimate  (\ref{estimate * 2}) and recalling $\xi_{\varepsilon
,\lambda}(T)=0$, we get
\[
\varphi^{1}(u_{\varepsilon,\lambda}(T))\leq C\text{,}%
\]
which along with  assumptions  \textbf{(A3)}, \textbf{(A4)}, and \textbf{(A6)} implies%
\begin{align*}
|u_{\varepsilon,\lambda}(T)|_{X} \leq C, \quad
|\eta_{\varepsilon,\lambda}^{1}(T)|_{X^{\ast}} \leq C, \quad
|\eta_{\varepsilon,\lambda}^{2}(T)|_{V^{\ast}}  \leq C\text{.}%
\end{align*}
By using assumption \textbf{(A6)} again, we get
\begin{align}
\left\Vert \eta_{\varepsilon,\lambda}^{2}\right\Vert _{L^{p^{\prime}%
}(0,T;V^{\ast})}  &  \leq C\text{, }\label{estimate eta 2}\\
\int_{0}^{T}\varphi^{2}(u_{\varepsilon,\lambda})  &  \leq C.
\label{estimate varphi 2}%
\end{align}
Finally, by comparison in  equation  (\ref{regularized nonconv euler eq}),
\[
\left\Vert \varepsilon\xi_{\varepsilon,\lambda}^{\prime}\right\Vert
_{L^{p^{\prime}}(0,T;V^{\ast})+L^{m^{\prime}}(0,T;X^{\ast})}\leq C\text{.}%
\]

\subsection{The passage to the limit as $\lambda\rightarrow0$.}\label{Ss:lim}

Owing to the obtained uniform estimates, up to some (not relabeled)
subsequence $\lambda\rightarrow0$, we have
\begin{align*}
\eta_{\varepsilon,\lambda}^{1}  &  \rightarrow\eta_{\varepsilon}^{1}\text{
weakly in }L^{m^{\prime}}(0,T;X^{\ast}),\\
\eta_{\varepsilon,\lambda}^{2}  &  \rightarrow\eta_{\varepsilon}^{2}\text{
weakly in }L^{p^{\prime}}(0,T;V^{\ast}),\\
\xi_{\varepsilon,\lambda}  &  \rightarrow\xi_{\varepsilon}\text{ weakly in
}L^{p^{\prime}}(0,T;V^{\ast}),\\
\xi_{\varepsilon,\lambda}^{\prime}  &  \rightarrow\xi_{\varepsilon}^{\prime
}\text{ weakly in }L^{p^{\prime}}(0,T;V^{\ast})+L^{m^{\prime}}(0,T;X^{\ast
}),\\
u_{\varepsilon,\lambda}  &  \rightarrow u_{\varepsilon}\text{ weakly in
}W^{1,p}(0,T;V)\text{ and in }L^{m}(0,T;X)\text{, }\\
u_{\varepsilon,\lambda}(T)  &  \rightarrow v_{\varepsilon}\text{ weakly in
}X\text{,}\\
\eta_{\varepsilon,\lambda}^{2}(T)  &  \rightarrow q_{\varepsilon}^{2}\text{
weakly in }V^{\ast}\text{,}\\
f(u_{\varepsilon,\lambda})  &  \rightarrow f_{\varepsilon}^{\ast}\text{ weakly
in }L^{p^{\prime}}(0,T;V^{\ast})\text{,}%
\end{align*}
and hence,  thanks to Aubin-Lions-Simon's compactness lemma  (see \cite{Si}),%
\begin{align}
u_{\varepsilon,\lambda}  &  \rightarrow u_{\varepsilon}\text{ strongly in
}C([0,T];V)\text{,}\label{strong conv u lambda}\\
\xi_{\varepsilon,\lambda}  &  \rightarrow\xi_{\varepsilon}\text{ strongly in
}C([0,T];X^{\ast})\text{.} \label{strong c xi}%
\end{align}
In particular,%
\[
u_{\varepsilon,\lambda}(t)\rightarrow u_{\varepsilon}(t)\text{ strongly in
}V\text{ for all }t\in\lbrack0,T],
\]
 which also yields  $v_{\varepsilon}=u_{\varepsilon}(T)$, $u_{\varepsilon}\left(  0\right)
=u_{0}$, and $\xi_{\varepsilon}(T)=0$.  By virtue of   the continuity of $f:V \to V^*$ and the convergence (\ref{strong conv u lambda}) (see also Remark \ref{remark assumptions f}),  we get
$f_{\varepsilon}^{\ast}=f(u_{\varepsilon})$ and
\begin{equation}
f(u_{\varepsilon,\lambda})\rightarrow f(u_{\varepsilon})\text{ strongly in
}L^{p^{\prime}}(0,T;V^{\ast})\text{.} \label{conv f}%
\end{equation}
Thus,  we assure that  %
\begin{equation}
-\varepsilon\xi_{\varepsilon}^{^{\prime}}+\xi_{\varepsilon}+\eta_{\varepsilon
}^{1}-\eta_{\varepsilon}^{2}-f\left(  u_{\varepsilon}\right)  =0.
\label{eq epsilon}%
\end{equation}
The inclusions $\eta_{\varepsilon}^{2}\in\partial_{V}\varphi^{2}%
(u_{\varepsilon})$ and $q_{\varepsilon}^{2}\in\partial_{V}\varphi^{2}%
(u_{\varepsilon}(T))$ follow by a standard monotonicity argument (see, e.g.
\cite[Chap. II, Section 1.2]{Ba}) as a consequence of the strong convergence
(\ref{strong conv u lambda}). We shall now identify the limit $\eta
_{\varepsilon}^{1}$ as $\eta_{\varepsilon}^{1}\in\partial_{X}\varphi_{X}^{1}(u_{\varepsilon})$
a.e. in $(0,T)$. By a standard argument for monotone operators, it follows from the weak convergences obtained above that
\begin{align}
\liminf_{\lambda\rightarrow0}\int_{s}^{t}\left\langle \eta_{\varepsilon
,\lambda}^{1},u_{\varepsilon,\lambda}\right\rangle _{X}  &  \geq\int_{s}%
^{t}\left\langle \eta_{\varepsilon}^{1},u_{\varepsilon}\right\rangle
_{X}\text{,}\label{limimf eta 1}\\
\liminf_{\lambda\rightarrow0}\int_{s}^{t}\left\langle \xi_{\varepsilon
,\lambda},u_{\varepsilon,\lambda}^{\prime}\right\rangle _{V}  &  \geq\int
_{s}^{t}\left\langle \xi_{\varepsilon},u_{\varepsilon}^{\prime}\right\rangle
_{V}\text{} \label{liminf xi}%
\end{align}
for all $0\leq s\leq t\leq T$. Let us note that the quantity $a(t)=\liminf
_{\lambda\rightarrow0}|\xi_{\varepsilon,\lambda}(t)|_{V^{\ast}}^{p^{\prime}}%
$, belongs to the space $L^{1}(0,T)$ by Fatou's Lemma and  estimate
(\ref{est xi lambda}). In particular, $a(t)<\infty$ for a.a. $t\in(0,T)$, and
for such $t$, we can take a subsequence $\lambda_{n}^{t}$ (possibly depending
on $t$) such that
\[
\xi_{\varepsilon,\lambda_{n}^{t}}(t)\rightarrow\xi_{\varepsilon}(t)\text{
weakly in }V^{\ast}\text{.}%
\]
Thus, thanks to convergence (\ref{strong conv u lambda}), we observe that the set $\mathcal{L}\subset(0,T)$ defined by
\begin{align*}
\mathcal{L}  &  :=\left\{  t\in(0,T)\text{: }t\text{ is a Lebesgue point for
}t\longmapsto\left\langle \xi_{\varepsilon}(t),u_{\varepsilon}(t)\right\rangle
_{V}\text{,}\right. \\
&  \left.  \text{ and for any sequence }\lambda_{n}\rightarrow0\text{
there exists a subsequence }\lambda_{n}^{\prime}\right. \\
&  \left.  \text{ such that }\left\langle \xi_{\varepsilon,\lambda_{n}%
^{\prime}}(t),u_{\varepsilon,\lambda_{n}^{\prime}}(t)\right\rangle
_{V}\rightarrow\left\langle \xi_{\varepsilon}(t),u_{\varepsilon}%
(t)\right\rangle _{V}\text{ }\right\}  \text{}%
\end{align*}
has the full (Lebesgue) measure. Thus, by virtue of the convergences above and of (\ref{liminf xi}), we deduce, for arbitrary $t_{1},t_{2}\in\mathcal{L}$,
$t_{2}>t_{1}$ and a (not-relabeled) subsequence $\lambda\rightarrow 0$ (possibly depending on the choice of 
$t_{1},t_{2}$), that
\begin{align*}
& \limsup_{\lambda\rightarrow0}\int_{t_{1}}^{t_{2}}\left\langle \eta
_{\varepsilon,\lambda}^{1},u_{\varepsilon,\lambda}\right\rangle _{X} 
\\ & =\limsup_{\lambda\rightarrow0}\left\{  -\int_{t_{1}}^{t_{2}}\left\langle
\xi_{\varepsilon,\lambda},u_{\varepsilon,\lambda}\right\rangle _{V}%
+\int_{t_{1}}^{t_{2}}\left\langle \varepsilon\xi_{\varepsilon,\lambda}%
^{\prime},u_{\varepsilon,\lambda}\right\rangle _{X}\right. \\
&  \quad \left.  +\int_{t_{1}}^{t_{2}}\left\langle f(u_{\varepsilon,\lambda
}),u_{\varepsilon,\lambda}\right\rangle _{V}+\int_{t_{1}}^{t_{2}}\left\langle
\eta_{\varepsilon,\lambda}^{2},u_{\varepsilon,\lambda}\right\rangle
_{V}\right\} \\
&  \leq\lim_{\lambda\rightarrow0}\left\{  -\int_{t_{1}}^{t_{2}}\left\langle
\xi_{\varepsilon,\lambda},u_{\varepsilon,\lambda}\right\rangle _{V}%
+\int_{t_{1}}^{t_{2}}\left\langle f(u_{\varepsilon,\lambda}),u_{\varepsilon
,\lambda}\right\rangle _{V}+\int_{t_{1}}^{t_{2}}\left\langle \eta
_{\varepsilon,\lambda}^{2},u_{\varepsilon,\lambda}\right\rangle _{V}\right\}
\\
&  \quad +\lim_{\lambda\rightarrow0}\left.  \left\langle \varepsilon\xi
_{\varepsilon,\lambda},u_{\varepsilon,\lambda}\right\rangle _{V}\right\vert
_{t_{1}}^{t_{2}}-\liminf_{\lambda\rightarrow0}\int_{t_{1}}^{t_{2}}\left\langle
\varepsilon\xi_{\varepsilon\,\lambda},u_{\varepsilon,\lambda}^{\prime
}\right\rangle _{V}\\
&  \leq-\int_{t_{1}}^{t_{2}}\left\langle \xi_{\varepsilon},u_{\varepsilon
}\right\rangle _{V}+\int_{t_{1}}^{t_{2}}\left\langle f(u_{\varepsilon
}),u_{\varepsilon}\right\rangle _{V}+\int_{t_{1}}^{t_{2}}\left\langle
\eta_{\varepsilon}^{2},u_{\varepsilon}\right\rangle _{V}\\
&  \quad +\left.  \left\langle \varepsilon\xi_{\varepsilon},u_{\varepsilon
}\right\rangle _{V}\right\vert _{t_{1}}^{t_{2}}-\int_{t_{1}}^{t_{2}%
}\left\langle \varepsilon\xi_{\varepsilon},u_{\varepsilon}^{\prime
}\right\rangle _{V}\\
&  \leq\int_{t_{1}}^{t_{2}}\left\langle \eta_{\varepsilon}^{1},u_{\varepsilon
}\right\rangle _{X}\text{.}%
\end{align*}
Here, we  also  used the integration-by-parts formula%
\begin{equation}\label{CL}
\left.  \left\langle \varepsilon\xi_{\varepsilon},u_{\varepsilon}\right\rangle
_{V}\right\vert _{t_{1}}^{t_{2}}-\int_{t_{1}}^{t_{2}}\left\langle
\varepsilon\xi_{\varepsilon},u_{\varepsilon}^{\prime}\right\rangle _{V}%
=\int_{t_{1}}^{t_{2}}\left\langle \varepsilon\xi_{\varepsilon}^{\prime
},u_{\varepsilon}\right\rangle _{X}%
\end{equation}
derived in \cite{Ak-St2}. This  fact  together with (\ref{limimf eta 1}) yields%
\[
\lim_{\lambda\rightarrow0}\int_{s}^{t}\left\langle \eta_{\varepsilon,\lambda
}^{1},u_{\varepsilon,\lambda}\right\rangle _{X}=\int_{s}^{t}\left\langle
\eta_{\varepsilon}^{1},u_{\varepsilon}\right\rangle _{X}\text{ for a.e. }0\leq
s\leq t\leq T\text{,}%
\]
and hence, $\eta_{\varepsilon}^{1}\in\partial_{X}\varphi_{X}^{1}%
(u_{\varepsilon})$ a.e in $(0,T)$.

Let us next show the inclusion $\xi=\mathrm{d}_{V}\psi\left(  u^{\prime}\right)
$. For arbitrary $t_{1}$,$t_{2}\in\mathcal{L}$, $t_{2}>t_{1}$, by integration
by parts, we compute, up to a (not-relabeled) subsequence, 
\begin{align*}
&\limsup_{\lambda\rightarrow0}\int_{t_{1}}^{t_{2}}\left\langle \xi
_{\varepsilon,\lambda},u_{\varepsilon,\lambda}^{\prime}\right\rangle _{V}  \\
&=\lim_{\lambda\rightarrow0}\left.  \left\langle \xi_{\varepsilon,\lambda
},u_{\varepsilon,\lambda}\right\rangle _{V}\right\vert _{t_{1}}^{t_{2}%
}-\liminf_{\lambda\rightarrow0}\int_{t_{1}}^{t_{2}}  \left\langle \xi
_{\varepsilon,\lambda}^{\prime},u_{\varepsilon,\lambda}\right\rangle_{X} \\
&  =\left.  \left\langle \xi_{\varepsilon},u_{\varepsilon}\right\rangle
_{V}\right\vert _{t_{1}}^{t_{2}}-\frac{1}{\varepsilon}\lim_{\lambda
\rightarrow0}\int_{t_{1}}^{t_{2}}\left\langle \xi_{\varepsilon,\lambda}%
-\eta_{\varepsilon,\lambda}^{2}-f\left(  u_{\varepsilon,\lambda}\right)
,u_{\varepsilon,\lambda}\right\rangle _{V}\\
& \quad -\frac{1}{\varepsilon}\lim_{\lambda\rightarrow0}\int_{t_{1}}^{t_{2}%
}\left\langle \eta_{\varepsilon,\lambda}^{1},u_{\varepsilon,\lambda
}\right\rangle _{X}\text{.}%
\end{align*}
As a consequence of the convergence obtained above and of identity
(\ref{eq epsilon}), we get
\begin{align*}
&\limsup_{\lambda\rightarrow0}\int_{t_{1}}^{t_{2}}\left\langle \xi
_{\varepsilon,\lambda},u_{\varepsilon,\lambda}^{\prime}\right\rangle _{V}  \\
&=\left.  \left\langle \xi_{\varepsilon},u_{\varepsilon}\right\rangle
_{V}\right\vert _{t_{1}}^{t_{2}}-\frac{1}{\varepsilon}\int_{t_{1}}^{t_{2}%
}\left\langle \xi_{\varepsilon}-\eta_{\varepsilon}^{2}-f\left(  u_{\varepsilon
}\right)  ,u_{\varepsilon}\right\rangle _{V}-\frac{1}{\varepsilon}\int_{t_{1}%
}^{t_{2}}\left\langle \eta_{\varepsilon}^{1},u_{\varepsilon}\right\rangle
_{X}\\
&  =\left.  \left\langle \xi_{\varepsilon},u_{\varepsilon}\right\rangle
_{V}\right\vert _{t_{1}}^{t_{2}}-\int_{t_{1}}^{t_{2}}\left\langle
\xi_{\varepsilon}^{\prime},u_{\varepsilon}\right\rangle_{X} \stackrel{\eqref{CL}}= \int_{t_{1}%
}^{t_{2}}\left\langle \xi_{\varepsilon},u_{\varepsilon}^{\prime}\right\rangle
_{V}\text{.}%
\end{align*}
In particular, by virtue of (\ref{liminf xi}),
\[
\lim_{\lambda\rightarrow0}\int_{t_{1}}^{t_{2}}\left\langle \xi_{\varepsilon
,\lambda},u_{\varepsilon,\lambda}^{\prime}\right\rangle _{V}=\int_{t_{1}%
}^{t_{2}}\left\langle \xi_{\varepsilon},u_{\varepsilon}^{\prime}\right\rangle
_{V}\text{.}%
\]
From the arbitrariness of $t_{1},t_{2} \in {\mathcal L}$, we conclude that $\xi_{\varepsilon} =\mathrm{d}_{V}\psi\left(  u_{\varepsilon}^{\prime}\right)  $ a.e. in
$\left(  0,T\right)  $. This proves the first half of Theorem
\ref{full gen theorem}.

Before moving  on  to the causal limit as $\varepsilon\rightarrow0$, let us derive an energy inequality for later use. 
Repeating the same argument  as in (\ref{formula}), we have
\begin{align*}
\int_{0}^{t}\left\langle \xi_{\varepsilon,\lambda},u_{\varepsilon,\lambda
}^{\prime}\right\rangle _{V}  &  \leq\varepsilon\left\langle \xi
_{\varepsilon,\lambda}(t),u_{\varepsilon,\lambda}^{\prime}(t)\right\rangle
_{V}+\varepsilon\psi(0)\\
&  -\varphi^{1}(u_{\varepsilon,\lambda}(t))+\varphi^{1}(u_{0})+\int_{0}%
^{t}\left\langle \eta_{\varepsilon,\lambda}^{2}+f(u_{\varepsilon,\lambda
}),u_{\varepsilon,\lambda}^{\prime}\right\rangle _{V}\text{.}%
\end{align*}
Thus,  due to the convergences obtained above along with the lower 
semicontinuity of $\varphi^{1}$ and identity (\ref{eq epsilon}), we obtain 
 (recalling that $\xi_{\varepsilon, \lambda}(T)=0$) 
 \begin{align}
\int_{0}^{T}\left\langle \xi_{\varepsilon},u_{\varepsilon}^{\prime
}\right\rangle _{V}  &  \leq\liminf_{\lambda\rightarrow0}\int_{0}%
^{T}\left\langle \xi_{\varepsilon,\lambda},u_{\varepsilon,\lambda}^{\prime
}\right\rangle _{V} \leq\limsup_{\lambda\rightarrow0}\int_{0}^{T}\left\langle \xi_{\varepsilon,\lambda},u_{\varepsilon,\lambda}^{\prime}\right\rangle _{V} \label{bbb}\\%
&\leq\varepsilon\psi(0)-\liminf_{\lambda\rightarrow0}\varphi^{1}(u_{\varepsilon
,\lambda}(T))+\varphi^{1}(u_{0})\nonumber\\
&\quad  +\limsup_{\lambda\rightarrow0}\varphi_{\lambda}^{2}(u_{\varepsilon,\lambda
}(T))-\varphi^{2}(u_{0})+\lim_{\lambda\rightarrow0}\int_{0}^{T}\left\langle
f(u_{\varepsilon,\lambda}),u_{\varepsilon,\lambda}^{\prime}\right\rangle
_{V}.\nonumber
 \end{align}
Note that, by definition of subdifferential and by using the convergences obtained above, we have,  as  $\lambda\rightarrow0$,
\[
\varphi^{2}_{\lambda } (u_{\varepsilon,\lambda}(T))
\leq\varphi^{2}_{\lambda }(u_{\varepsilon
}(T))+\left\langle \eta_{\varepsilon,\lambda}^{2}(T),u_{\varepsilon,\lambda
}(T)-u_{\varepsilon}(T)\right\rangle _{V}\rightarrow\varphi^{2}(u_{\varepsilon
}(T))%
\]
(we also used the fact that $u_{\vep,\lambda}(T) \in D(\varphi^1) \subset D(\varphi^2)$ by (A6)), i.e.,%
\[
\limsup_{\lambda\rightarrow0}\varphi^{2}_{\lambda }(u_{\varepsilon,\lambda}%
(T))\leq\varphi^{2}(u_{\varepsilon}(T))\text{.}%
\]
Thus, substituting the above into (\ref{bbb}), we get
 \begin{align}
  \label{form3}
\int_{0}^{T}\left\langle \xi_{\varepsilon},u_{\varepsilon}^{\prime }\right\rangle _{V} &\leq\varepsilon\psi(0)-\varphi^{1}(u_{\varepsilon
  }(T))\\
  &\quad +\varphi^{1}(u_{0})+\varphi^{2}(u_{\varepsilon}(T))-\varphi^{2}%
(u_{0})+\int_{0}^{T}\left\langle f(u_{\varepsilon}),u_{\varepsilon}^{\prime
}\right\rangle _{V}\text{.} \nonumber
 \end{align}

\subsection{The causal limit as $\varepsilon\rightarrow0$.}

We now deal with the passage to the limit  as  $\varepsilon\rightarrow0$. We first note that, thanks to the estimates mentioned above and the weak lower
semicontinuity of norms, we have%
\begin{align*}
\left\Vert u_{\varepsilon}\right\Vert _{W^{1,p}(0,T;V)}+\left\Vert
u_{\varepsilon}\right\Vert _{L^{m}(0,T;X)}  &  \leq C,\\
\left\Vert \eta_{\varepsilon}^{1}\right\Vert _{L^{m^{\prime}}(0,T;X^{\ast})}
&  \leq C,\\
\left\Vert \xi_{\varepsilon}\right\Vert _{L^{p^{\prime}}(0,T;V^{\ast})}  &
\leq C\text{,}\\
\left\Vert f(u_{\varepsilon})\right\Vert _{L^{p^{\prime}}(0,T;V^{\ast})}  &
\leq C,\\
\left\Vert \eta_{\varepsilon}^{2}\right\Vert _{L^{p^{\prime}}(0,T;V^{\ast})}
&  \leq C\text{,}\\
\left\Vert \varepsilon\xi_{\varepsilon}^{\prime}\right\Vert _{L^{p^{\prime}%
}(0,T;V^{\ast})+L^{m^{\prime}}(0,T;X^{\ast})}  &  \leq C,\\
\left\Vert u_{\varepsilon}(T)\right\Vert _{X}  &  \leq C,\\
\left\Vert q_{\varepsilon}^{2}\right\Vert _{V^{\ast}}  &  \leq C,
\end{align*}
where $q^2_\vep \in \partial_V \varphi^2(u_\vep(T))$ (see \S \ref{Ss:lim}). 
Up to a (not relabeled) subsequence, we get the following convergence results
\begin{alignat*}{4}
\eta_{\varepsilon}^{1}  &  \rightarrow\eta^{1}&&\text{ weakly in }L^{m^{\prime}%
}(0,T;X^{\ast}),\\
\eta_{\varepsilon}^{2}  &  \rightarrow\eta^{2}&&\text{ weakly in }L^{p^{\prime}%
}(0,T;V^{\ast}),\\
\xi_{\varepsilon}  &  \rightarrow\xi&&\text{ weakly in }L^{p^{\prime}%
}(0,T;V^{\ast}),\\
\varepsilon\xi_{\varepsilon}^{\prime}  &  \rightarrow0&&\text{ weakly in
}L^{p^{\prime}}(0,T;V^{\ast})+L^{m^{\prime}}(0,T;X^{\ast}),\\
\varepsilon\xi_{\varepsilon}  & \rightarrow0 && \text{ strongly in } C([0,T];X^*), \\
u_{\varepsilon}  &  \rightarrow u&&\text{ weakly in }W^{1,p}(0,T;V)\text{ and in
}L^{m}(0,T;X)\text{, }\\
& &&\text{ strongly in }C\left(  [0,T];V\right)\text{,}\\
 u_{\varepsilon}(T)  &  \rightarrow v&&\text{ weakly in }X\text{,}\\
q_{\varepsilon}^{2}  &  \rightarrow q^{2}&&\text{ weakly in }V^{\ast}\text{,}\\
f(u_{\varepsilon})  &  \rightarrow f(u)\ &&\text{ strongly in }L^{p^{\prime}%
}(0,T;V^{\ast}).
\end{alignat*}
In particular, $v=u(T)$, and $u\left(  0\right)  =u_{0}$, and moreover,
\begin{equation}
\xi+\eta^{1}-\eta^{2}-f\left(  u\right)  =0 \ \mbox{ in } X^*, \quad 0 < t < T. \label{final eq}%
\end{equation}
As a consequence of the strong convergence $u_{\varepsilon}\rightarrow u$ in $C([0,T];V)$ and the weak convergence of $\eta^2_\vep$ (respectively, $q^2_\vep$), by the demiclosedness of maximal monotone operators (see also~\cite[Proposition 1.1]{Ke}), we obtain the relation $\eta^{2}(t)\in\partial_{V}\varphi^{2}(u(t))$ for a.e.~$t\in (0,T)$ (respectively, $q^{2}\in\partial_{V}\varphi^{2}(u(T))$).  Repeating the same argument  as in Section \ref{convex case}, we can obtain
\[
\limsup_{\varepsilon\rightarrow0}\int_{0}^{T}\left\langle \eta_{\varepsilon
}^{1},u_{\varepsilon}\right\rangle _{X}\leq\int_{0}^{T}\left\langle \eta
^{1},u\right\rangle _{X}\text{,}%
\]
which proves $\eta^{1}\in\partial_{X}\varphi_{X}^{1}(u)$, and hence, $\eta
^{1}\in\partial_{V}\varphi^{1}(u)$ by $\eta^1 = f(u) + \eta^2 - \xi \in L^{p'}(0,T;V^*)$ (see~\cite{Ak-St2}). In order to  prove the relation $\xi=\mathrm{d}_{V}\psi(u^{\prime})$, we proceed as follows: By definition of
subdifferential and by using the convergences obtained so far, we have, as
$\varepsilon\rightarrow0$,
\[
\varphi^{2}(u_{\varepsilon}(T))\leq\varphi^{2}(u(T))+\left\langle
q_{\varepsilon}^{2},u_{\varepsilon}(T)-u(T)\right\rangle _{V}\rightarrow
\varphi^{2}(u(T))\text{,}%
\]
i.e.,%
\[
\limsup_{\varepsilon\rightarrow0}\varphi^{2}(u_{\varepsilon}(T))\leq
\varphi^{2}(u(T))\text{.}%
\]
Thus, by the convergences above along with the semicontinuity of $\varphi^{1}$, estimate (\ref{form3}) and identity (\ref{final eq}), we derive 
\begin{align*}
\limsup_{\varepsilon\rightarrow0}\int_{0}^{T}\left\langle \xi_{\varepsilon
},u_{\varepsilon}^{\prime}\right\rangle _{V}  &  \leq\lim_{\varepsilon
\rightarrow0}\varepsilon\psi(0)-\liminf_{\varepsilon\rightarrow0}\varphi
^{1}(u_{\varepsilon}(T))+\varphi^{1}(u_{0})\\
&  +\limsup_{\varepsilon\rightarrow0}\varphi^{2}(u_{\varepsilon}%
(T))-\varphi^{2}(u_{0})+\lim_{\varepsilon\rightarrow0}\int_{0}^{T}\left\langle
f(u_{\varepsilon}),u_{\varepsilon}^{\prime}\right\rangle _{V}\\
&  \leq-\varphi^{1}(u(T))+\varphi^{1}(u_{0})+\varphi^{2}(u(T))-\varphi
^{2}(u_{0})+\int_{0}^{T}\left\langle f(u),u^{\prime}\right\rangle _{V}\\
&  =\int_{0}^{T}\left\langle \xi,u^{\prime}\right\rangle _{V}\text{.}%
\end{align*}
Here we also used the chain rule developed in~\cite{Ak-St2} (cf.~\eqref{CL}). 
Thus, $\xi(t)=\mathrm{d}_{V}\psi(u^{\prime}(t))$ for a.a. $t\in(0,T)$.

 \begin{remark}
  {\rm
Thanks to results of Section \ref{causal limit} and to uniform
estimates obtained in Section \ref{nonconvex energies}, one can pass to
the limit also in the opposite order: first let $\varepsilon\rightarrow0$ and then let $\lambda\rightarrow0$. 
  }
 \end{remark}

\section{WED principle for  doubly-nonlinear  flows of nonconvex
energies\label{section wed minimization}}

In this short section, as a by-product, we develop a variational characterization based on WED functionals for  doubly-nonlinear  flows of nonconvex energy functionals (i.e., the case $f=0$). The WED variational principle has been mainly studied for convex (at most $\lambda$-convex) energy functionals; on the other hand, nonconvex energy functionals have not yet been treated except in~\cite{Ak-St}, where standard gradient flows (i.e., $\psi$ is the quadratic potential) of non($\lambda$-)convex energy functionals are treated. In particular,  doubly-nonlinear  flows with nonconvex energies have never been studied so far.  The following theorem provides a generalization of the results in \cite{Ak-St} to  doubly-nonlinear  flows of nonconvex energies.

\begin{theorem}
 Assume \emph{\textbf{(A1)}-\textbf{(A6)}} and let $f=0$.  Then, for every $\varepsilon>0$, the WED functional $W_{\varepsilon}$ defined
by 
\[
W_{\varepsilon}(u)=\left\{
\begin{array}
[c]{cl}%
\int_{0}^{T}\mathrm{e}^{-t/\varepsilon}\left(  \varepsilon\psi(u^{\prime
})+\varphi^{1}(u)-\varphi^{2}(u)\right)  & \text{if } \ u\in
K(u_{0})\cap L^{m}\left(  0,T;X\right)  \text{,}\\
\infty & \text{ otherwise,}%
\end{array}
\right.
\]
admits at least one global minimizer over the set $K(u_{0}):=\{u\in W^{1,p}\left(
0,T;V\right)  :u(0)=u_{0}\}$.  Furthermore, every local minimizer $u_{\varepsilon}$
solves \emph{(\ref{nonconvex euler eq})-(\ref{nc el ic})}, and  moreover,  $u_{\varepsilon}\rightarrow u$ weakly in $W^{1,p}(0,T;V)\cap L^{m}\left(  0,T;X\right)$ and strongly in $C([0,T];V)$, where the limit  $u$ solves \emph{(\ref{nonconvex target equation})-(\ref{ic nc})}.
\end{theorem}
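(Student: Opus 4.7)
The plan is to establish the three conclusions in turn: existence of a global minimizer of $W_\varepsilon$ by the direct method, the Euler-Lagrange characterization of any local minimizer by reduction to the convex setting of Theorem \ref{min th}, and the causal limit $\varepsilon \to 0$ by invoking Theorem \ref{full gen theorem} with $f \equiv 0$.

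For existence, I would apply the direct method on $K(u_0) \cap L^m(0,T;X)$. Coercivity follows by assembling (A1), (A3), and (A6): the estimate $\varphi^1 - \varphi^2 \geq (1-k)\varphi^1 - C_6(|u|_V^p + 1)$ leaves the $\varphi^1$ term in control, while the elementary weighted bound
\[
\int_0^T e^{-t/\varepsilon}|u|_V^p\, dt \leq C\varepsilon\Big(|u_0|_V^p + \int_0^T e^{-t/\varepsilon}|u'|_V^p\, dt\Big),
\]
obtained from $|u(t)|_V \leq |u_0|_V + t^{1/p'}\|u'\|_{L^p(0,t;V)}$ after switching the order of integration, lets the $C_6 |u|_V^p$ remainder be absorbed into the dissipation $\varepsilon\int e^{-t/\varepsilon}\psi(u')$ for $\varepsilon$ small. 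A minimizing sequence $\{u_n\}$ is then uniformly bounded in $W^{1,p}(0,T;V) \cap L^m(0,T;X)$, and Aubin-Lions-Simon yields a subsequence with $u_n \to u$ strongly in $L^p(0,T;V)$. The $\psi$- and $\varphi^1$-contributions are weakly lower semicontinuous by convexity; for the problematic $-\varphi^2$ term, I would measurably select $\eta_n^2 \in \partial_V\varphi^2(u_n)$ (allowed by (A6)), use \eqref{cond} together with the $L^\infty(V)$-bound on $u_n$ and the $L^1$-bound on $\varphi^1(u_n)$ to see that $\{\eta_n^2\}$ is bounded in $L^{p'}(0,T;V^*)$, and exploit the convexity inequality $\varphi^2(u_n) \leq \varphi^2(u) + \langle \eta_n^2, u_n - u\rangle_V$ to obtain $\limsup_n \int_0^T e^{-t/\varepsilon}\varphi^2(u_n) \leq \int_0^T e^{-t/\varepsilon}\varphi^2(u)$. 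Combining these gives $W_\varepsilon(u) \leq \liminf_n W_\varepsilon(u_n)$.

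For the Euler-Lagrange step, let $u_\varepsilon$ be any local minimizer and select $\eta_\varepsilon^2 \in \partial_V\varphi^2(u_\varepsilon)$ in $L^{p'}(0,T;V^*)$ as above. The main trick I would use is that by convexity of $\varphi^2$,
\[
-\varphi^2(u) + \varphi^2(u_\varepsilon) \geq -\langle \eta_\varepsilon^2, u - u_\varepsilon\rangle_V \quad \text{a.e.~in }(0,T),
\]
so that the convex functional $\tilde W_\varepsilon(u) := \int_0^T e^{-t/\varepsilon}\bigl(\varepsilon\psi(u') + \varphi^1(u) - \langle \eta_\varepsilon^2, u\rangle_V\bigr)\,dt$ satisfies $\tilde W_\varepsilon(u) - \tilde W_\varepsilon(u_\varepsilon) \geq W_\varepsilon(u) - W_\varepsilon(u_\varepsilon)$ for every competitor $u$. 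Hence $u_\varepsilon$ is a local minimizer of $\tilde W_\varepsilon$, and since $\tilde W_\varepsilon$ is convex, a standard convex-analysis argument (evaluate along the segment $\lambda u + (1-\lambda)u_\varepsilon$) promotes this to global minimality. Applying Theorem \ref{min th} with $\phi = \varphi^1$ and datum $w = \eta_\varepsilon^2$ then produces precisely the regularity and Euler-Lagrange inclusion asserted in \eqref{nonconvex euler eq}--\eqref{nc el ic}, with the $\eta^1 \in \partial_X\varphi^1_X(u_\varepsilon)$ supplied by the theorem and $\eta^2$ the preselected $\eta_\varepsilon^2 \in \partial_V\varphi^2(u_\varepsilon)$.

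Finally, since every local minimizer $u_\varepsilon$ solves the elliptic regularized problem and $f \equiv 0$ trivially satisfies (A5), the causal-limit half of Theorem \ref{full gen theorem} yields the claimed convergence $u_{\varepsilon_n} \to u$ weakly in $W^{1,p}(0,T;V)\cap L^m(0,T;X)$ and strongly in $C([0,T];V)$, together with the identification of $u$ as a strong solution of \eqref{nonconvex target equation}--\eqref{ic nc}. The hard part is really the upper semicontinuity step in the direct method: the usual weak lower semicontinuity breaks under the minus sign on $\varphi^2$, and what rescues the argument is precisely the subdifferential growth bound \eqref{cond} in (A6), which supplies the $L^{p'}$-bounded selections $\eta_n^2$ needed to close the estimate via the strong $L^p$-convergence provided by compactness.
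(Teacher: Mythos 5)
Your route is genuinely different from the paper's. The paper never runs the direct method on $W_\varepsilon$ itself: it minimizes the Moreau--Yosida regularized functionals $W_{\varepsilon,\lambda}$ (where the troublesome $-\varphi^2$ term is replaced by $-\varphi^2_\lambda$, continuous along strong $L^p(0,T;V)$ convergence through the single-valued, demicontinuous Yosida approximation), recovers a minimizer of $W_\varepsilon$ only in the limit $\lambda\to 0$, and treats local minimizers by a penalization $\hat\varphi^1=\varphi^1+\alpha\frac1p|\cdot-\hat u_\varepsilon|_V^p$ followed by the same $\lambda$-scheme. Your substitutes --- the direct method with measurable selections $\eta^2_n\in\partial_V\varphi^2(u_n)$, bounded in $L^{p'}(0,T;V^*)$ by \eqref{cond}, to get the $\limsup$ inequality for $\int e^{-t/\varepsilon}\varphi^2(u_n)$, and the convexification trick that freezes a subgradient $\eta^2_\varepsilon$ and turns a local minimizer of $W_\varepsilon$ into a global minimizer of the convex functional $I_{\varepsilon,\eta^2_\varepsilon}$ --- are attractive, since they bypass the $\lambda$-limit and the penalization entirely and reduce the Euler--Lagrange step to Theorem \ref{min th}. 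Three points there still need care: (i) measurability of the selections must be justified (the paper sidesteps this by working with the single-valued $\partial_V\varphi^2_\lambda$); (ii) Theorem \ref{min th}, as stated, produces \emph{some} minimizer of $I_{\varepsilon,w}$ with the stated regularity and Euler--Lagrange system, so to transfer this to your specific $u_\varepsilon$ you must assume $\varphi^1$ strictly convex, which is admissible by the Remark following Theorem \ref{min th} (shift $|u|_V^{m-\delta}$ into both $\varphi^1$ and $\varphi^2$; the extra single-valued terms cancel in the optimality system, but this should be said); (iii) the causal limit of Section \ref{nonconvex energies} is carried out for solutions produced by the $\lambda$-approximation --- in particular the energy inequality \eqref{form3} is obtained by passing $\lambda\to0$ --- so quoting it for an arbitrary local minimizer requires rederiving that inequality directly from the regularity of $u_\varepsilon$ via the chain rule and integration-by-parts formula of \cite{Ak-St2}; this is doable but not automatic.

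The genuine gap is in your coercivity step. The bound you state,
\[
\int_0^T e^{-t/\varepsilon}|u|_V^p\,dt\le C\varepsilon\Bigl(|u_0|_V^p+\int_0^Te^{-t/\varepsilon}|u'|_V^p\,dt\Bigr),
\]
obtained from $|u(t)|_V\le|u_0|_V+t^{1/p'}\|u'\|_{L^p(0,t;V)}$ and Fubini, is true with $C=C(p,T)$, but it cannot be absorbed into the dissipation ``for $\varepsilon$ small'': by \textbf{(A1)} the available dissipation is $\varepsilon\int e^{-t/\varepsilon}\psi(u')\ge\frac{\varepsilon}{C_1}\int e^{-t/\varepsilon}|u'|_V^p-C\varepsilon$, so the remainder $C_6\int e^{-t/\varepsilon}|u|_V^p$ and the dissipation carry the \emph{same} factor $\varepsilon$, and absorption would require the $\varepsilon$-independent smallness condition $C\,C_1C_6T^{p-1}<1$, which is not assumed. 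The repair is the sharper exponentially weighted Hardy--Poincar\'e inequality
\[
\int_0^T e^{-t/\varepsilon}|u(t)-u_0|_V^p\,dt\le K_p\,\varepsilon^p\int_0^Te^{-t/\varepsilon}|u'|_V^p\,dt,
\]
with $K_p$ depending only on $p$ (rescale $t=\varepsilon\tau$ and use the weight-$e^{-\tau}$ Hardy inequality on the half line, as in \cite{Mi-St} and \cite{Ak-St}); the factor $\varepsilon^p$ instead of $\varepsilon$ makes the remainder subordinate to the $O(\varepsilon)$ dissipation as soon as $\varepsilon^{p-1}$ is small, uniformly in $T$. With this replacement your direct-method existence proof closes for $\varepsilon$ below a threshold (note this is still weaker than the ``every $\varepsilon>0$'' in the statement, a point you should either fix or flag), and the rest of your scheme stands as a legitimate alternative to the paper's Moreau--Yosida/penalization proof.
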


\begin{proof}
We shall follow the strategy for proving \cite[Theorem 4.1]{Ak-St}. For all $\lambda > 0$, let us define regularized WED functionals 
$W_{\varepsilon,\lambda}:L^{p}(0,T;V)\rightarrow%
\mathbb{R}
\cup\{\infty\}$ by
\[
W_{\varepsilon,\lambda}(u)=\left\{
\begin{array}
[c]{cl}%
\int_{0}^{T}\mathrm{e}^{-t/\varepsilon}\left(  \varepsilon\psi(u^{\prime
})+\varphi^{1}(u)-\varphi_{\lambda}^{2}(u)\right)  & \text{if } \ u\in
K(u_{0})\cap L^{m}\left(  0,T;X\right)  \text{,}\\
\infty & \text{ otherwise, }%
\end{array}
\right.
\]
and decompose  them  as the difference $W_{\varepsilon,\lambda}(u) =C_{\varepsilon}^{1}(u)-C_{\varepsilon,\lambda}^{2}(u)$ of convex functionals
\begin{align*}
C_{\varepsilon}^{1}(u)  &  :=\left\{
\begin{array}
[c]{cl}%
\int_{0}^{T}\mathrm{e}^{-t/\varepsilon}\left(  \varepsilon\psi(u^{\prime
})+\varphi^{1}(u)\right)  & \text{if } \ u\in K(u_{0})\cap L^{m}\left(
0,T;X\right)  \text{,}\\
\infty & \text{ otherwise, }%
\end{array}
\right. \\
C_{\varepsilon,\lambda}^{2}(u)  &  :=\int_{0}^{T}\mathrm{e}^{-t/\varepsilon
}\varphi_{\lambda}^{2}(u)\text{,}%
\end{align*}
where $\varphi_{\lambda}^{2}$ denotes the Moreau-Yosida regularization of
$\varphi^{2}$ as in Section \ref{nonconvex energies}. Note that
$C_{\varepsilon}^{1}$ is lower semicontinuous in $L^{p}\left(  0,T;V\right)
$. Moreover, for all sequences $u_{n}\rightarrow u$ strongly in $L^{p}%
(0,T;V)$, we have $C_{\varepsilon,\lambda}^{2}(u_{n})\rightarrow
C_{\varepsilon,\lambda}^{2}(u)$. Indeed, by convexity and lower semicontinuity
of $\varphi_{\lambda}^{2}$, one has
\[
\liminf_{n\rightarrow\infty}C_{\varepsilon,\lambda}^{2}(u_{n})\geq
C_{\varepsilon,\lambda}^{2}(u)\text{.}%
\]
Furthermore, by definition of subdifferential, we get
\begin{align}
 \label{cont C2}
 \limsup_{n\rightarrow\infty}C_{\varepsilon,\lambda}^{2}(u_{n})  &  \leq
C_{\varepsilon,\lambda}^{2}(u)+\lim_{n\rightarrow\infty}\left\langle
\partial_{L^{p}(0,T;V)}C_{\varepsilon,\lambda}(u_{n}),u_{n}-u\right\rangle
_{L^{p}(0,T;V)}\\
&  = C_{\varepsilon,\lambda}^{2}(u)+\lim_{n\rightarrow\infty}\int_{0}%
^{T}\exp(-t/\varepsilon)\left\langle \partial_{V}\varphi_{\lambda}^{2}%
 (u_{n}),u_{n}-u\right\rangle _{V}\nonumber\\
 &=C_{\varepsilon,\lambda}^{2}(u)\nonumber
\end{align}
(see also \S \ref{yosida regularization} in Appendix). 
Thanks to (\ref{phi2 bound}) and the fact that
$\varphi_{\lambda}^{2}(u)\leq\varphi^{2}(u)$ for all $u\in D\left(
\varphi^{2}\right)  $, $W_{\varepsilon,\lambda}$ is bounded from below. Let
$\{u_{n}\}$ be a  minimizing  sequence of $W_{\varepsilon,\lambda}$. Thus, by using the coercivity assumptions (\ref{coercivity psi}), (\ref{coercivity phi}), and  assumption  (\ref{phi2 bound}) (see also \cite{Ak-St}), we can deduce that the sequence $\{u_{n}\}$ is uniformly bounded in $W^{1,p}(0,T;V)\cap L^{m}\left(  0,T;X\right)$, and up to a (not relabeled) subsequence, $u_{n}\rightarrow u_{\varepsilon,\lambda}$ strongly in $C([0,T];V)$ for some $u_{\vep,\lambda} \in W^{1,p}(0,T;V) \cap L^m(0,T;X)$. Thus, by using the lower semicontinuity of $C^{1}_{\varepsilon}$ and (\ref{cont C2}), we conclude that
$u_{\varepsilon,\lambda}$ is a minimizer of $W_{\varepsilon,\lambda}$. In
particular, $u_{\varepsilon,\lambda}$ solves
\[
0\in\partial_{L^{p}(0,T;V)}W_{\varepsilon,\lambda}\left(  u_{\varepsilon
,\lambda}\right)  =\partial_{L^{p}(0,T;V)}C_{\varepsilon}^{1}\left(
u_{\varepsilon,\lambda}\right)  -\mathrm{d}_{L^{p}(0,T;V)}C_{\varepsilon
,\lambda}^{2}\left(  u_{\varepsilon,\lambda}\right)  \text{,}%
\]
which is equivalent to (\ref{regularized nonconv euler eq})-(\ref{regularized nonconvex euler eq ic}) with $f = 0$ (see \cite{Ak-St2}). Thus, as in Section \ref{nonconvex energies}, we are ready to prove that (up to
subsequences) $u_{\varepsilon,\lambda}\rightarrow u_{\varepsilon}$ weakly in
$W^{1,p}(0,T;V)\cap L^{m}\left(  0,T;X\right)  $ for $\lambda\rightarrow0$ and
that the limit $u_{\varepsilon}$ solves (\ref{nonconvex euler eq})-(\ref{nc el ic}).
Furthermore, by lower semicontinuity, we have
\[
\liminf_{\lambda\rightarrow0}C_{\varepsilon}^{1}(u_{\varepsilon,\lambda})\geq
C_{\varepsilon}^{1}(u_{\varepsilon})\text{,}%
\]
and as in~\cite[Theorem 4.1]{Ak-St}, 
\[
\limsup_{\lambda\rightarrow0}C_{\varepsilon,\lambda}^{2}(u_{\varepsilon
,\lambda})\leq\limsup_{\lambda\rightarrow0}\int_{0}^{T}\mathrm{e}%
^{-t/\varepsilon}\varphi^{2}(u_{\varepsilon,\lambda})\leq\int_{0}%
^{T}\mathrm{e}^{-t/\varepsilon}\varphi^{2}(u_{\varepsilon})\text{.}%
\]
Moreover, $W_{\varepsilon,\lambda}(v)\rightarrow W_{\varepsilon}(v)$ for all
$v\in D(W_{\varepsilon})$  as $\lambda \to 0$.  Thus, $u_{\varepsilon}$ minimizes $W_{\varepsilon}%
$. Indeed, let $v\in D(W_{\varepsilon})$.  Then it follows that 
\[
W_{\varepsilon}(u_{\varepsilon}) =  C_{\varepsilon}^{1}(u_{\varepsilon}%
)-\int_{0}^{T}\mathrm{e}^{-t/\varepsilon}\varphi^{2}(u_{\varepsilon}%
)\leq\liminf_{\lambda\rightarrow0}W_{\varepsilon,\lambda}(u_{\varepsilon
,\lambda})\leq\lim_{\lambda\rightarrow0}W_{\varepsilon,\lambda}%
(v)=W_{\varepsilon}(v)\text{.}%
\]
This proves existence of a minimizer $u_{\varepsilon}$ of the WED
functional $W_{\varepsilon}$ such that $u_{\varepsilon}$ solves the
Euler-Lagrange equation (\ref{nonconvex euler eq})-(\ref{nc el ic}) (with $f=0$) of $W_\vep$ (or the elliptic-in-time regularized equation).

We  next  prove that every local minimizer of $W_{\varepsilon}$ solves
(\ref{nonconvex euler eq})-(\ref{nc el ic}) by using a penalization argument
(cf. \cite{Ak-St}). To this aim, let $\hat{u}_{\varepsilon}$ be a local 
minimizer of $W_{\varepsilon}$ and define the penalized functional,
$$
\hat{W}_{\varepsilon}(u) =\int_{0}^{T}\mathrm{e}^{-t/\varepsilon}\left(
\varepsilon\psi(u^{\prime})+\hat{\varphi}^{1}(u)-\varphi^{2}(u)\right)
 $$
 with 
 $$
\hat{\varphi}^{1}(u)  =\varphi^{1}(u)+\alpha \frac{1}{p}|u-\hat{u}_{\varepsilon
}|_{V}^{p}\text{}%
$$ 
for some $\alpha > 0$.
Then, arguing as in \cite[Theorem 4.2]{Ak-St} one can check that, for $\alpha = \alpha (\hat{u}_\varepsilon)$ sufficiently big, the functional $\hat{W}_{\varepsilon}$ admits a unique global minimizer $\hat{u}_\varepsilon$.
 Note that $\hat{\varphi}%
^{1}$ satisfies assumption \textbf{(A3)}, \textbf{(A4)},  and  \textbf{(A6)}. Thus, by applying the results obtained so far, for  each  $\lambda>0$, there exists a minimizer $\tilde{u}_{\varepsilon,\lambda}$ of the regularized WED functional,  
$$
\hat{W}_{\varepsilon,\lambda}(u)=\int_{0}^{T}\mathrm{e}^{-t/\varepsilon
}\left(  \varepsilon\psi(u^{\prime})+\hat{\varphi}^{1}(u)-\varphi_{\lambda
}^{2}(u)\right),
$$
such that $\tilde{u}_{\vep, \lambda}$ solves the Euler-Lagrange equation, 
\begin{gather*}
-\varepsilon\tilde{\xi}_{\varepsilon,\lambda}^{\prime}+\tilde{\xi
}_{\varepsilon,\lambda}+\tilde{\eta}_{\varepsilon,\lambda}^{1}+\alpha\tilde{\gamma}_{\varepsilon,\lambda} -\tilde{\eta
}_{\varepsilon,\lambda}^{2}   =0\text{
 in }X^{\ast}\text{ a.e. in }(0,T)\text{,}\\
 \tilde{\xi}_{\varepsilon,\lambda} = \mathrm{d}_{V}\psi(\tilde {u}_{\varepsilon,\lambda}^{\prime}), \quad
\tilde{\eta}_{\varepsilon,\lambda}^{1} \in\partial_{X}\varphi_{X}%
^{1}(\tilde{u}_{\varepsilon,\lambda}), \quad \tilde{\eta
}_{\varepsilon,\lambda}^{2} = \partial_{V}\varphi_{\lambda}^{2}(\tilde
{u}_{\varepsilon,\lambda})\text{,}\\
\tilde{\gamma}_{\varepsilon,\lambda}    =|\tilde{u}_{\varepsilon,\lambda
}-\hat{u}_{\varepsilon}|_{V}^{p-2}F_{V}\left(  \tilde{u}_{\varepsilon,\lambda
}-\hat{u}_{\varepsilon}\right)  \text{,}\\
\tilde{u}_{\varepsilon,\lambda}(0) =u_{0}\text{, }\quad\tilde{\xi
}_{\varepsilon,\lambda}(T)=0,
\end{gather*}
where $F_{V}:V\rightarrow V^{\ast}$ denotes the duality mapping between $V$ and $V^*$. Indeed, the functional $u \mapsto \int_{0}^{T}e^{-t/\varepsilon}\frac{1}{p}|u(t)-\hat {u}_{\varepsilon}(t)|_{V}^{p}$  is Fr\'{e}chet differentiable in $L^{p}(0,T;V)$ and its Fr\'{e}chet derivative at $\tilde{u}_{\varepsilon,\lambda}$ is given as $ e^{-t/\vep} \tilde\gamma_{\vep,\lambda}$. We can now derive uniform
estimates for $\tilde{u}_{\varepsilon,\lambda}$ and prove convergence of
$\tilde{u}_{\varepsilon,\lambda}$ to a limit $\tilde{u}_{\varepsilon}$ which
minimizes $\hat{W}_{\varepsilon}$ and solves%
\begin{gather*}
-\varepsilon\tilde{\xi}_{\varepsilon}^{\prime}+\tilde{\xi}_{\varepsilon
}+\tilde{\eta}_{\varepsilon}^{1}-\tilde{\eta}_{\varepsilon}^{2}+\alpha\tilde{\gamma
}_{\varepsilon}   =0\text{ in }X^{\ast}\text{ a.e. in }(0,T)\text{,}\\
\tilde{\xi}_{\varepsilon} = \mathrm{d}_{V}\psi(\tilde{u}_{\varepsilon
}^{\prime})\text{,} \quad \tilde{\eta}_{\varepsilon}^{1}   \in\partial_{X}\varphi_{X}^{1}(\tilde
{u}_{\varepsilon}), \quad \tilde{\eta}_{\varepsilon}^{2}%
\in \partial_{V}\varphi^{2}(\tilde{u}_{\varepsilon}) \text{,}\\
\tilde{\gamma}_{\varepsilon}  =|\tilde{u}_{\varepsilon}-\hat{u}%
_{\varepsilon}|_{V}^{p-2}F_{V}\left(  \tilde{u}_{\varepsilon}-\hat
{u}_{\varepsilon}\right)  \text{,}\\
\tilde{u}_{\varepsilon}(0)   =u_{0}, \quad \tilde{\xi
}_{\varepsilon}(T)=0.
\end{gather*}
Recall that, by penalization, the unique minimizer of $\hat{W}_{\varepsilon}$ is
$\hat{u}_{\varepsilon}$ (see also \cite{Ak-St3}). Thus, $\hat{u}_{\varepsilon
}=\tilde{u}_{\varepsilon}$ and, by the substitution of this relation into the
equation above, $\tilde{\gamma}_{\varepsilon}=0$, and hence, $\hat
{u}_{\varepsilon}$ solves (\ref{nonconvex euler eq})-(\ref{nc el ic}).

Finally, the  limiting procedure as  $\varepsilon\rightarrow0$ has already been proved in Section \ref{nonconvex energies}.
This completes the proof. 
\end{proof}

\section{An alternative approach}\label{second approach}

In this section, we exhibit a slightly different approach. More precisely,  we shall prove the assertion of  Theorem \ref{full gen theorem} under different assumptions on $f$ and $\varphi^{2}$.

We recall that, in Section \ref{convex case}, we introduced the map $S$ defined by
\begin{align*}
S  &  :L^{p}(0,T;V)\rightarrow L^{p}(0,T;V),\\
S  &  :v\mapsto w:=f(v) \longmapsto u,
\end{align*}
where $u$ is the global minimizer of $I_{\varepsilon,w}$, and looked for a fixed point of $S$  
to construct a solution of the elliptic-in-time regularization \eqref{fix point Euler}-\eqref{fix point ic}. Note that the map
$S$ is the composition of two different maps: $v\mapsto f(v)$ and
$w\longmapsto u := \mathrm{argmin} \, I_{\vep,w} $.  Alternatively, one may  consider a map $\tilde{S}$ which
is the composition of the same maps in the opposite order, namely
\begin{align*}
\tilde{S}  &  :L^{p^{\prime}}(0,T;V^{\ast})\rightarrow L^{p^{\prime}%
}(0,T;V^{\ast})\text{,}\\
\tilde{S}  &  :v\mapsto u\longmapsto f(u)\text{,}%
\end{align*}
where $u$ is the global minimizer of $I_{\varepsilon,v}$. Note that if
$\tilde{S}$ has a fixed point $v$, then the minimizer of $I_{\varepsilon,v}$
solves \eqref{fix point Euler}-\eqref{fix point ic}. In order to apply the Schaefer fixed-point theorem to $\tilde{S}$, one has to  check the (strong) continuity of $\tilde{S}$ in $L^{p'}(0,T;V^*)$ as in  \S \ref{sec S cont}.  Furthermore, as for  nonconvex  energies, $f$ is replaced by $f + \partial_V \varphi^2_\lambda$ above. In case $V$ is a Hilbert space, one can prove the (strong) continuity of $\tilde S$ by employing the Lipschitz continuity of Yosida approximations in the Hilbert space setting. On the other hand, in case $V$ is a Banach space, it seems somewhat difficult to prove the (strong) continuity of $\tilde S$, due to the lack of (strong) continuity of the Yosida approximation $\partial_{V}\varphi_{\lambda }^{2} : V \to V^*$ in a Banach space setting (it is only demicontinuous. See Appendix and \cite{Ba}). Hence in order to recover the continuity of $\tilde{S}$, we assume that
$$
\varphi^{2} \mbox{ is of class } C^1 (V;\mathbb{R}) 
$$
(in the sense of Fr\'{e}chet derivative). Then, $\partial_{V}\varphi^{2}$ is single-valued and continuous from $V$ into $V^*$. Hence, it is no longer necessary to employ the Yosida approximation of $\partial_V \varphi^2$. On the other hand, the growth and the continuity conditions in \textbf{(A5)} can be relaxed as follows:
\begin{description}
\item[(A5${}'$)] $f:X \to V^\ast$ satisfies 
\begin{equation}
|f(u)|_{V^\ast}^{p^{\prime}} \leq C (1+\varphi^1(u)+|u|_V^p) \text{ for all } u \in X \label{growth condition 2}
\end{equation}
and some positive constant C. Moreover, if $u \in L^{m}(0,T;X)\cap W^{1,p}(0,T;V)$, then $f(u) \in L^{p^{\prime}%
}(0,T;V^{\ast})$. Furthermore, if $u_{n} \to u$ weakly in $L^{m}(0,T;X)$ and in $W^{1,p}(0,T;V)$, and $(f(u_{n}))$ is bounded in $L^{p^{\prime}}(0,T;V^{\ast })$, then $f(u_{n})\rightarrow f(u)$ strongly in $L^{p^{\prime}}(0,T;V^{\ast})$.
\end{description}
Note that these assumptions guarantee the continuity of $\tilde{S}$ corresponding to  nonconvex  energies.
In particular, one may prove the following:

\begin{theorem} \label{thm second approach}
Let assumptions \emph{\textbf{(A1)}-\textbf{(A4)}}, \emph{\textbf{(A5${}'$)}}, and
\emph{\textbf{(A6) }}be satisfied. Assume either that $\varphi^{2}$ is  of class $C^1$ in $V$  or that $V$ is a Hilbert space. Then, there exists
$\varepsilon_{0}>0$ such that for each $\varepsilon\in(0,\varepsilon_{0})$
 the system \emph{(\ref{nonconvex euler eq})-(\ref{nc el ic})} admits a  solution $u_{\varepsilon}$. Moreover, there exists a sequence $\varepsilon
_{n}\rightarrow0$ such that $u_{\varepsilon_{n}}\rightarrow u$ weakly in
$L^{m}(0,T;X)\cap W^{1,p}\left(  0,T;V\right)  $ and strongly in $C\left(
[0,T];V\right)  $ and  the limit  $u$ solves \emph{(\ref{nonconvex target equation})-(\ref{ic nc}).}
\end{theorem}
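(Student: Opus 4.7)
The plan is to adapt the Schaefer fixed-point argument of Section \ref{convex case}, applied now to the alternative map $\tilde{S} : L^{p'}(0,T;V^\ast) \to L^{p'}(0,T;V^\ast)$ defined by
\begin{equation*}
\tilde{S}(v) \;:=\; f(u_v) + \partial_V \varphi^2(u_v)
\end{equation*}
in the $C^1$ case, or by $\tilde{S}(v) := f(u_v) + \partial_V \varphi^2_\lambda(u_v)$ in the Hilbert case (with the Moreau--Yosida regularization of $\varphi^2$ as in Section \ref{nonconvex energies}), where $u_v := \operatorname{argmin} I_{\varepsilon,v}$ is the WED minimizer supplied by Theorem \ref{min th} (with $\varphi^1$ strictly convexified if needed). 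Any fixed point $v_\varepsilon$ of $\tilde{S}$ yields, via $u_\varepsilon := u_{v_\varepsilon}$, a strong solution of the regularized problem \eqref{nonconvex euler eq}--\eqref{nc el ic}; in the Hilbert case the subsequent limit $\lambda\to 0$ is performed exactly as in Section \ref{nonconvex energies}.

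The first task is to verify well-definedness, strong continuity and compactness of $\tilde{S}$. Given $v\in L^{p'}(0,T;V^\ast)$, Theorem \ref{min th} and the a priori bound \eqref{a priori estimate} provide $u_v\in W^{1,p}(0,T;V)\cap L^m(0,T;X)$; hypothesis \textbf{(A5${}'$)} then ensures $f(u_v)\in L^{p'}(0,T;V^\ast)$, while \textbf{(A6)} gives $\partial_V \varphi^2(u_v)\in L^{p'}(0,T;V^\ast)$. If $v_n \to v$ strongly in $L^{p'}(0,T;V^\ast)$, the $\Gamma$-convergence argument of Subsection \ref{sec S cont} together with the uniform estimate \eqref{a priori estimate} yields $u_{v_n} \to u_v$ weakly in $W^{1,p}(0,T;V)\cap L^m(0,T;X)$ and strongly in $C([0,T];V)$. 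The last clause of \textbf{(A5${}'$)} then delivers $f(u_{v_n}) \to f(u_v)$ strongly in $L^{p'}(0,T;V^\ast)$; in the $C^1$ case the continuity of $\partial_V \varphi^2:V\to V^\ast$ combined with the $C([0,T];V)$ convergence and a Vitali argument based on the growth bound \eqref{cond} gives $\partial_V \varphi^2(u_{v_n}) \to \partial_V \varphi^2(u_v)$ strongly in $L^{p'}(0,T;V^\ast)$, whereas in the Hilbert case the same conclusion follows from the $1/\lambda$-Lipschitz continuity of $\partial_V \varphi^2_\lambda$. Compactness of $\tilde{S}$ on bounded sets is then immediate from the compact embedding $W^{1,p}(0,T;V)\cap L^m(0,T;X) \hookrightarrow\hookrightarrow L^p(0,T;V)$.

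Next, we bound the set $A := \{v\in L^{p'}(0,T;V^\ast) : v = \alpha \tilde{S}(v),\ \alpha \in [0,1]\}$. If $v = \alpha \tilde S(v)$, then $u := u_v$ satisfies $-\varepsilon \xi' + \xi + \eta^1 - \alpha \partial_V \varphi^2(u) - \alpha f(u) = 0$ in $X^\ast$. Testing with $u'$, integrating on $(0,t)$, using the Fenchel identity together with \textbf{(A1)}, \textbf{(A2)}, the absorption trick of \eqref{phi2 bound} (the factor $k\in[0,1)$ allows the $\varphi^2$-term to be dominated by $(1-k)\varphi^1$), and the growth \eqref{growth condition 2} from \textbf{(A5${}'$)}, one arrives, exactly as in \eqref{new estimate}--\eqref{estimate * 2}, at an estimate of the form
\begin{equation*}
-\varepsilon \psi^\ast(\xi(t)) + c\!\int_0^t |u'|_V^p + (1-k)\varphi^1(u(t)) \;\leq\; C + C\!\int_0^t \bigl(1 + \varphi^1(u) + |u|_V^p\bigr).
\end{equation*}
Gronwall's lemma and the Fenchel estimate \eqref{fenchel estimate} then yield, for $\varepsilon$ smaller than some $\varepsilon_0 = \varepsilon_0(T,\psi)$, a uniform bound on $u$ in $W^{1,p}(0,T;V)\cap L^m(0,T;X)$ independent of $\alpha$; a further application of \textbf{(A5${}'$)} and \textbf{(A6)} transfers this to a uniform $L^{p'}(0,T;V^\ast)$-bound on $v$. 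Schaefer's theorem (Theorem \ref{schaefer}) produces a fixed point $v_\varepsilon$ of $\tilde{S}$, and $u_\varepsilon := u_{v_\varepsilon}$ is the desired solution of the regularized problem.

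The final causal limit $\varepsilon\to 0$ (preceded in the Hilbert case by $\lambda\to 0$) is carried out verbatim as in Section \ref{nonconvex energies}: uniform estimates give weak compactness in $W^{1,p}(0,T;V)\cap L^m(0,T;X)$ and strong compactness in $C([0,T];V)$ via Aubin--Lions--Simon; the identifications $\eta^1\in\partial_X \varphi^1_X(u)$, $\eta^2 \in \partial_V \varphi^2(u)$ and $\xi = \mathrm{d}_V \psi(u')$ are recovered through the $\limsup$-inequalities and integration-by-parts developed in \cite{Ak-St2}, and the strong convergence $f(u_\varepsilon)\to f(u)$ supplied by \textbf{(A5${}'$)} closes the argument. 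The main obstacle, and the reason for introducing \textbf{(A5${}'$)} together with the Fr\'echet-differentiability / Hilbert dichotomy, is precisely the \emph{strong} continuity of the map $u \mapsto \partial_V\varphi^2(u)$ (or its Yosida approximation) from $W^{1,p}(0,T;V)\cap L^m(0,T;X)$ into $L^{p'}(0,T;V^\ast)$: only under one of these two assumptions can one upgrade the demicontinuity of the Yosida operator in a general Banach space to the strong continuity needed to close the Schaefer scheme for $\tilde S$.
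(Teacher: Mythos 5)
Your proposal follows essentially the same route as the paper's: the reversed composition $\tilde S$ acting on $L^{p^{\prime}}(0,T;V^{\ast})$, well-posedness of $v\mapsto u_v$ via Theorem \ref{min th}, continuity and compactness of $\tilde S$ from the a priori bound \eqref{a priori estimate} together with \textbf{(A5${}'$)}, boundedness of the Schaefer set by testing with $u^{\prime}$ and absorbing the $\varphi^{2}$- and $f$-terms through \eqref{phi2 bound}, \eqref{growth condition 2}, Gronwall's lemma and \eqref{fenchel estimate}, and finally the causal limit as in Sections \ref{convex case} and \ref{nonconvex energies}; the $C^1$/Hilbert dichotomy is used exactly as in the paper (no Yosida approximation needed versus Lipschitz continuity of the Yosida approximation). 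The only structural difference is cosmetic: the paper first settles the case $\varphi^{2}=0$ in detail and then declares the general case analogous to Section \ref{nonconvex energies}, whereas you fold $\partial_{V}\varphi^{2}$ (respectively $\partial_{V}\varphi^{2}_{\lambda}$) into $\tilde S$ from the outset.

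One justification should be repaired. For the strong $L^{p^{\prime}}(0,T;V^{\ast})$ continuity of $v\mapsto \partial_{V}\varphi^{2}(u_{v})$ in the $C^1$ case you invoke ``a Vitali argument based on the growth bound \eqref{cond}''; but \eqref{cond} only gives $|\partial_{V}\varphi^{2}(u_{v_n})|_{V^{\ast}}^{p^{\prime}}\leq C\left(\varphi^{1}(u_{v_n})+1\right)$ (using the uniform $C([0,T];V)$ bound to control $\ell_7$), and $\varphi^{1}(u_{v_n})$ is merely bounded in $L^{1}(0,T)$ through the $L^{m}(0,T;X)$ bound, which is not uniform integrability, so Vitali does not apply as stated. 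The conclusion is nonetheless correct and the fix is short: since $u_{v_n}\to u_{v}$ strongly in $C([0,T];V)$, the union of the trajectories $\{u_{v_n}(t)\colon n,\ t\in[0,T]\}\cup\{u_v(t)\colon t\in[0,T]\}$ is relatively compact in $V$; the continuous map $\partial_{V}\varphi^{2}:V\to V^{\ast}$ is bounded on this compact set, and the pointwise convergence $\partial_{V}\varphi^{2}(u_{v_n}(t))\to\partial_{V}\varphi^{2}(u_{v}(t))$ in $V^{\ast}$ together with a constant dominating function gives the strong $L^{p^{\prime}}$ convergence by dominated convergence. With this substitution (and the Lipschitz--Yosida argument you already give in the Hilbert case) your proof is complete and coincides with the paper's.
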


\begin{proof}
Let us start with the case $\varphi^2=0$, i.e., $\phi=\varphi^1$. Our proof follows the scheme of the proof of Theorem \ref{thm convex} given in Section
\ref{convex case}. Let $\tilde{S}: L^{p^\prime}(0,T;V^\ast) \to L^{p^\prime}(0,T;V^\ast)$ be defined as above. Fix $v \in L^{p^\prime}(0,T;V^\ast)$ and let $u$ be the global minimizer of $I_{\varepsilon,v}$. Then, $f(u)=\tilde{S}(v) \in L^{p^\prime}(0,T;V^\ast)$. Following the procedure of \S \ref{Sss:3.1.1}, one can prove that $u$ is uniformly bounded in $W^{1,p}(0,T;V) \cap L^m(0,T;X)$ by a constant depending on $\Vert v \Vert_{L^{p^\prime}(0,T;V^\ast)}$. This fact together with the (weak-strong) continuity assumption on $f$ in \textbf{(A5')} ensures both continuity and compactness of the map $\tilde{S}$.
Let now $v \in L^{p^\prime}(0,T;V^\ast)$, $\alpha \in (0,1]$, and $u=\operatorname{argmin} I_{\varepsilon,v}$ be such that 
$$
v=\alpha \tilde{S}(v) (=\alpha f(u)). 
$$
Then, $u$ solves system (\ref{Eu})-(\ref{Eu3}) with $f(u)$ replaced by $\alpha f(u)$. By testing equation (\ref{Eu}) with $u^\prime $ and proceeding as for (\ref{est2})-(\ref{estimate 1}), we get, for $t \in (0,T]$
\begin{align*}
& \int_0^T|u^\prime|^p_V \leq C+C\int_0^T |\alpha f(u)|_{V^\ast}^{p^\prime}, \\
& |u(t)|_V^p + \phi(u(t)) \leq C+C \int_0^t |\alpha f(u)|_{V^\ast}^{p^\prime} + C \int_0^t |u|_V^p + \varepsilon \psi^\ast(\xi(t)). 
\end{align*}
Using now the growth assumption on $f$ in \textbf{(A5')}, we obtain 
\begin{align}
& \int_0^T|u^\prime|^p_V \leq C+C\int_0^T \left( |u|_{V}^{p} + \phi(u)  \right), \label{eee1} \\
& |u(t)|_V^p + \phi(u(t)) \leq C+C \int_0^t \left( |u|_{V}^{p} + \phi(u)  \right) + \varepsilon \psi^\ast(\xi(t)). \label{eee2} 
\end{align}
Thanks to Gronwall's Lemma, we get 
$$
|u(t)|_V^p + \phi(u(t)) \leq C + \varepsilon \psi^\ast(\xi(t)) + C \varepsilon \int_0^t \varepsilon \psi^\ast(\xi).
$$
By substituting the latter into (\ref{eee2}), integrating both sides over $[0,T]$, taking the sum with (\ref{eee1}), and substituting again the latter into the right hand side,  we obtain
$$
\int_0^T|u^\prime|^p_V + \int_0^T |u|_V^p +\int_0^T \phi(u) \leq C + C \varepsilon \int_0^t \varepsilon \psi^\ast(\xi)
\leq C+C\varepsilon \int_0^T |u^\prime|^p_V.
$$
Here we used estimate (\ref{fenchel estimate}). For $\varepsilon$ sufficiently small this yields a bound of $$\Vert u \Vert_{W^{1,p}(0,T;V) \cap L^m(0,T;X)}$$ uniform in $\varepsilon$ and $\alpha$ and hence an uniform bound on$$\Vert f(u) \Vert_{L^{p^\prime}(0,T;V^\ast)}$$ thanks to \textbf{(A5')}. Thus, the set $\left\{ v \in L^{p^\prime}(0,T;V^\ast): v=\alpha \tilde{S}(v) \text{ for some } \alpha \in [0,1] \right\}$ is bounded. The Schaefer's fixed-point theorem allows us to conclude that $\tilde{S}$ has a fixed point $v$. As a consequence $u = \operatorname{argmin} I_{\varepsilon,f(u)}$ solves system (\ref{Eu})-(\ref{Eu3}). 
Being independent on $\varepsilon$ the above estimates, together with the continuity assumption in \textbf{(A5')} suffice also to pass to the causal limit $\varepsilon \to 0$ following the scheme presented in Section \ref{causal limit}. 
The case $\varphi^2 \neq 0$ follows from an argument analogous to the one presented in Section \ref{nonconvex energies}.   

\end{proof}
\section{Applications to nonlinear PDEs \label{applications}}

In this section, we shall apply the preceding abstract theory to a couple of concrete nonlinear PDEs.

\subsection{System of  doubly-nonlinear  parabolic equations}

We emphasize again that systems of PDEs may not entail any full  gradient-flow  structure; however, some of them can be reduced to a  nonpotential  perturbation problem for a  (doubly-nonlinear)  gradient flow. The following system of  doubly-nonlinear  differential equations falls within the scope of the abstract theory developed in the present paper (see also \cite{Me}). Let $\Omega$ be a bounded domain of $%
\mathbb{R}
^{d}$ with a smooth boundary $\partial \Omega$ and consider
\begin{alignat}{6}
\alpha_{i}(u_{i}^{\prime})-\Delta_{m}^{a_{i}}u_{i}  &  =g_{i}(u_{1}%
,...,u_{k}) \ &&\text{ in }\Omega\times(0,T]\quad &&\text{ for } \ i=1,...,k\text{,}%
\label{ex 1}\\
\frac{\partial u_{i}}{\partial n}  &  =0 &&\text{ on }\partial\Omega \times (0,T]\quad &&\text{ for } \ i=1,...,k\text{,}\\
u_{i}|_{t=0} &  =u_{0i} &&\text{ in }\Omega \quad &&\text{ for } \ i=1,...,k\text{, }
\label{ex3}%
\end{alignat}
where $n$ denotes the outward unit normal vector on $\partial\Omega$ and  $\alpha_i:%
\mathbb{R}
\rightarrow%
\mathbb{R}
$ are maximal monotone operators  such that there  exist  $p>1$ and a positive
constant $C$ such that
\begin{align*}
&C|s|^{p}-\frac{1}{C}\leq A_{i}(s):=\int_{0}^{s}\alpha_{i}(r)\mathrm{d}r
\\& \text{ and } \ |\alpha_{i}(s)|^{p^{\prime}}\leq C(|s|^{p}+1)
\ \text{ for all } s\in%
\mathbb{R}
\text{, } \ i=1,...,k\text{,}%
\end{align*}
and $\Delta_{m}^{a_{i}}$ is the so-called $m$-Laplace operator with a
coefficient function $a_{i}:\Omega\rightarrow%
\mathbb{R}
$ given by
\[
\Delta_{m}^{a_{i}}v=\nabla\cdot\left(  a_{i}(x)|\nabla v|^{m-2}\nabla
v\right)  \text{, } \quad 1<m<\infty\text{.}%
\]
Here we also assume $u_{0 i }\in W^{1,m}(\Omega)$, $a_{i}\in L^{\infty}(\Omega)$ and
$\bar{a}_{1} \leq a_{i}(x)\leq \bar{a}_{2}$ a.e.~in $\Omega$ for some $\bar{a}_{1},\bar{a}_{2}>0$, for all $i=1,...,k$. Finally, we assume $g_{i}:\mathbb{R}^{k}\rightarrow
\mathbb{R}^{k}$ to be continuous and to satisfy
\[
|g_{i}(u_{1},...,u_{k})|^{p^{\prime}}\leq C\left(  1+\sum_{i=1}^{k}|u_{i}%
|^{p}\right)
\]
for some positive constant $C$. In order to apply the abstract theory, we set
$V=\left(  L^{p}(\Omega)\right)  ^{k}$, $X=\left(  W^{1,m}(\Omega)\right)
^{k}$ and
$$
\psi(u) =\sum_{i=1}^{k}\int_{\Omega}A_{i}(u_{i})\quad \mbox{ for } u \in V.
$$
Note that the functional
\begin{equation}\label{til-phi}
\tilde{\phi}(u)=\frac{1}{m} \sum_{i=1}^{k} \int_{\Omega}a_{i}|\nabla u_{i}|^{m}%
 \quad \mbox{ for all } \ u \in X
 \end{equation}
does not satisfy the coercivity assumption \textbf{(A3)} under the Neumann boundary condition. Thus, we rewrite the
equation (\ref{ex 1}) as%
\[
\alpha_i (u_{i}^{\prime})-\Delta_{m}^{a_{i}}u_{i}+|u_{i}|^{m-2}u_{i}=|u_{i}|^{m-2}%
u_{i}+g_{i}(u_{1},...,u_{k})\text{ in }\Omega\times(0,T]
\]
for $i=1,...,k$, and set
\begin{align*}
\phi(u)  &  =\frac{1}{m}\sum_{i=1}^{k}\int_{\Omega} \left( a_{i}|\nabla u_{i} |^{m}+|u_{i}|^{m} \right)\text{,} \\
f_{i}(u)  &  =g_{i}(u_{1},...,u_{k})+|u_{i}|^{m-2}u_{i} \quad \text{ for all
} \ i=1,...,k\text{.}%
\end{align*}
Then, {\bf (A3)} is satisfied. Moreover, $f:=\left(  f_{1},...,f_{k}\right)$ satisfies assumption
\textbf{(A5)}, provided that $p \geq m$. Alternatively, one may set
\begin{align*}
\varphi^1(u)  &  =\frac{1}{m}\sum_{i=1}^{k}\int_{\Omega} \left( a_{i}|\nabla u_{i} |^{m}+|u_{i}|^{m} \right)\text{,} \quad \varphi^2(u) = \dfrac 1 m \sum_{i=1}^{k}\int_\Omega|u_i|^m,\\
f_{i}(u)  &  =g_{i}(u_{1},...,u_{k}) \quad \text{ for all
} \ i=1,...,k\text{.}%
\end{align*}
 Then, \textbf{(A3)-(A6)} hold true, provided that $p \geq m$.

 \begin{remark}
 {\rm
 \begin{enumerate}
 \item Let us remark that the  nonpotential  term $f$ is needed even in the case $k=1$ and $g=0$ in order to couple the equation with Neumann boundary conditions.  Of course, in this case, one can also overcome the difficulty by introducing a nonconvex energy instead of the  nonpotential  perturbation.
  \item As for Dirichlet problems, thanks to Poincar\'e's inequality, one can check \textbf{(A3)} with $\phi = \tilde \phi$ for any $1 < p < \infty$.
\end{enumerate}
 }
\end{remark}

We refer  the reader  to Section 7 of \cite{Ak-St2} for checking that assumptions \textbf{(A1)-(A4)} are satisfied. Thus, by applying
Theorem \ref{thm convex}, we prove the following:

\begin{theorem}
Let $1 < m \leq p < \infty$ and let the above assumptions be satisfied. Then, for every $\varepsilon > 0$
sufficiently small there exists {\rm (}at least{\rm )} a solution  $\{u_{i\varepsilon}\}_{i}$  to the  elliptic-in-time regularized equation %
\begin{alignat*}{6}
-\varepsilon\left(  \alpha_{i}(u_{i}^{\prime})\right)  ^{\prime}+\alpha
_{i}(u_{i}^{\prime})-\Delta_{m}^{a_{i}}u_{i}  &  =g_{i}(u_{1},...,u_{k}) \ &&\text{
in }\Omega\times(0,T] \ &&\text{ for }i=1,...,k\text{,}\\
\frac{\partial u_{i}}{\partial n}  &  =0&&\text{ on }\partial\Omega \times (0,T] \ &&\text{ for
}i=1,...,k\text{,}\\
u_{i}(0)  &  =u_{0i}&&\text{ in }\Omega&&\text{ for }i=1,...,k\text{,}\\
\varepsilon\alpha_{i}(u_{i}^{\prime}(T))  &  =0 &&\text{ in }\Omega&&\text{ for }i=1,...,k\text{.}%
\end{alignat*}
Moreover, $u_{i\varepsilon}\rightarrow u_{i}$ strongly in $C([0,T];L^{p}%
(\Omega))$ for each $i = 1,2,\ldots,k$,  and the limit $\{u_{i}\}_{i}$  solves system \emph{(\ref{ex 1}%
)-(\ref{ex3})}. As for the Dirichlet problem {\rm (}namely, the Neumann boundary condition is 
replaced by $u_i|_{\partial \Omega} = 0${\rm )}, all the assertions above hold true for all $1 < m,p < \infty$.
\end{theorem}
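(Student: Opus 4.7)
The plan is to cast the system \eqref{ex 1}--\eqref{ex3} into the abstract framework already developed and then directly invoke Theorem \ref{thm convex} (for the first decomposition, $\varphi^{2}=0$) or Theorem \ref{full gen theorem} (for the alternative decomposition with nontrivial $\varphi^{2}$). Thus the only work is to check that, with $V=(L^{p}(\Omega))^{k}$, $X=(W^{1,m}(\Omega))^{k}$, the dissipation potential $\psi(u)=\sum_{i}\int_{\Omega}A_{i}(u_{i})$, the energy $\phi(u)=\frac{1}{m}\sum_{i}\int_{\Omega}(a_{i}|\nabla u_{i}|^{m}+|u_{i}|^{m})$, and the perturbation $f_{i}(u)=g_{i}(u_{1},\ldots,u_{k})+|u_{i}|^{m-2}u_{i}$ (respectively $f_{i}(u)=g_{i}(u_{1},\ldots,u_{k})$ in the nonconvex version), the assumptions \textbf{(A1)--(A5)} (respectively \textbf{(A1)--(A6)}) are in force. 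The compact dense embedding $X\hookrightarrow\hookrightarrow V$ is provided by the Rellich--Kondrachov theorem, under the implicit bound $p<m^{\ast}=dm/(d-m)$ when $m<d$.

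The verification itself is a routine check of growth conditions: \textbf{(A1)} follows from $A_{i}(s)\geq C|s|^{p}-1/C$; \textbf{(A2)} follows from $|\alpha_{i}(s)|^{p'}\leq C(|s|^{p}+1)$ applied componentwise to $\mathrm{d}_{V}\psi(u)=(\alpha_{i}(u_{i}))_{i}$; \textbf{(A3)} uses $a_{i}\geq\bar{a}_{1}>0$ together with the $|u_{i}|^{m}$ term that is added precisely to compensate for the absence of Poincar\'{e}'s inequality in the Neumann setting; \textbf{(A4)} is a direct computation on the components $-\Delta_{m}^{a_{i}}u_{i}+|u_{i}|^{m-2}u_{i}$ of $\partial_{X}\phi_{X}$, using $a_{i}\leq\bar{a}_{2}$; and \textbf{(A5)} reduces, via the Nemytskii theorem, to the bound $\bigl||u_{i}|^{m-2}u_{i}\bigr|^{p'}\leq C(|u_{i}|^{p}+1)$, which holds if and only if $(m-1)p'\leq p$, i.e.~$m\leq p$, explaining the restriction appearing in the statement. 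For the nonconvex decomposition, \textbf{(A6)} requires $\varphi^{2}(u)\leq k\,\varphi^{1}(u)+C(|u|_{V}^{p}+1)$, which is obtained for any $k\in(0,1)$ by absorbing $\frac{1}{m}\int|u_{i}|^{m}$ partly into $\varphi^{1}$ and using $|u_{i}|^{m}\leq|u_{i}|^{p}+C$ (again under $m\leq p$) for the remainder; the growth of $\partial_{V}\varphi^{2}$ follows from the same pointwise computation as for \textbf{(A5)}.

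Once all assumptions are verified, Theorem \ref{thm convex} (respectively Theorem \ref{full gen theorem}) directly yields the existence of $\{u_{i\varepsilon}\}$ solving the regularized system with $u_{i\varepsilon}\in L^{m}(0,T;W^{1,m}(\Omega))\cap W^{1,p}(0,T;L^{p}(\Omega))$, together with strong convergence in $C([0,T];(L^{p}(\Omega))^{k})$ along a subsequence to a solution of \eqref{ex 1}--\eqref{ex3}; the boundary condition $\varepsilon\alpha_{i}(u_{i}^{\prime}(T))=0$ is the concrete form of the abstract $\xi(T)=0$. For the Dirichlet problem, Poincar\'{e}'s inequality furnishes the coercivity of the pure gradient functional on the trace-zero space, so neither the $|u_{i}|^{m}$ correction in $\phi$ nor the $|u_{i}|^{m-2}u_{i}$ contribution in $f$ is needed and the restriction $m\leq p$ disappears. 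The only conceptual choice in the proof is which of the two decompositions to adopt; once that is fixed, there is no substantive obstacle beyond the pointwise bookkeeping of exponents, with the sharpness of $m\leq p$ in the Neumann case being the one nontrivial point.
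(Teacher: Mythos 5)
Your proposal is correct and follows essentially the same route as the paper: the same choices of $V=(L^p(\Omega))^k$, $X=(W^{1,m}(\Omega))^k$, $\psi$, and the same two decompositions (convex $\phi$ with $f_i=g_i+|u_i|^{m-2}u_i$, or $\varphi^1,\varphi^2$ with $f_i=g_i$), followed by verification of \textbf{(A1)}--\textbf{(A5)} (resp. \textbf{(A1)}--\textbf{(A6)}) and an appeal to Theorem \ref{thm convex} (resp. Theorem \ref{full gen theorem}), with the restriction $m\leq p$ arising exactly as in the paper from the growth of $|u_i|^{m-2}u_i$ and the Dirichlet case handled via Poincar\'e's inequality. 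Your explicit remark on the Sobolev-exponent condition needed for the compact embedding $X\hookrightarrow\hookrightarrow V$ is a point the paper leaves implicit, so it is a welcome (if minor) addition rather than a deviation.
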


\begin{remark}\label{R:triplet}
 {\rm
Nonlinear equations such as \eqref{ex 1}-\eqref{ex3} are also treated only for $p \geq 2$ in~\cite{Ak}, where a perturbation theory for  doubly-nonlinear  abstract equation is developed in a framework based on the Gel'fand triplet, $V \hookrightarrow H \equiv H^* \subset V^*$. Indeed, the assumption $p \geq 2$ stems from the triplet, and it cannot be removed in the framework. On the other hand, our abstract theory is developed without assuming  the existence of such a  triplet, 
and therefore, the case $1 < p < 2$ also falls within the  scopes of the  preceding abstract theory.
 } 
\end{remark}

\subsection{Biharmonic equation}

The  abstract  theory developed in the current paper can be  also  applied to the quadratic dissipation potential $\psi(u)=\frac{1}{2}|u|_{V}^{2}$ in a Hilbert space $V$, and then, (\ref{target equation}) reads
\begin{equation}
u^{\prime}+\partial\phi(u)=f(u)\text{.} \label{gradient flows}%
\end{equation}
The WED approach to  nonpotential  perturbation  problems (\ref{gradient flows}) has been developed in \cite{Me}, where equation (\ref{gradient flows}) is formulated in a (single) Hilbert space setting. On the other hand, the following example may not fall within the scope of the theory of~\cite{Me}:  %
\begin{alignat}{4}
u^{\prime}+\left(\Delta\right)^{2}u  &  =\beta\cdot\nabla u \quad &&\text{ in
}\Omega\times(0,T],\label{ex2.1}\\
u  &  =0 &&\text{ on }\partial\Omega \times (0,T],\\
\frac{\partial u}{\partial n}  &  =0 &&\text{ on }\partial\Omega \times (0,T],\\
u(0)  &  =u_{0} &&\text{ in }\Omega\text{,} \label{ex2.2}%
\end{alignat}
where $n$ denotes the outward normal on $\partial\Omega$. Indeed, the  nonpotential  term $f(u) := \beta \cdot \nabla u$ in the right-hand side is not well-defined on the whole of the Hilbert space $H = L^2(\Omega)$. However, this obstacle can be overcome in the current setting. Indeed, by following the approach presented in Section \ref{second approach}, we just require $f$ to be defined over the effective domain of the energy potential $X=D(\varphi^1)$ (cf. \textbf{(A5')}).
 To apply our theory, we assume  that $\Omega$ is a bounded subset of $\mathbb{R}^{d}$ with sufficiently smooth boundary $\partial \Omega$, $u_{0}\in H^{2}(\Omega)\cap H_{0}^{1}(\Omega)$ with $\frac{\partial u_{0}}{\partial n}=0$ on $\partial\Omega$, and $\beta\in L^{\infty}(\Omega,\mathbb{R}^{d})$.  Moreover,  we set two spaces $V=L^{2}(\Omega)$, $X=  H^{2}(\Omega)\cap H_{0}^{1}(\Omega) $ and the energy functional
\[
\phi(u)=\left\{
\begin{array}
[c]{cl}%
\int_{\Omega}\frac{|\Delta u|^{2}}{2} & \text{if } \ u\in H^{2}(\Omega)\cap
H_{0}^{1}(\Omega)\text{ and }\frac{\partial u}{\partial n}=0\text{ on
}\partial\Omega\text{,}\\
\infty & \text{otherwise.}%
\end{array}
\right.
\]
 Furthermore,  set $\psi(u)=\frac{1}{2}|u|_{V}^{2}$, and $f: X \rightarrow 
V^{\ast}$ defined by $f(u)=\beta\cdot\nabla u$ and $p=m=2$. Note that the map $f$ satisfies assumption \textbf{(A5')}.
Indeed it is straightforward to check the growth condition (\ref{growth condition 2}). Furthermore, thanks to the compact embeddings $X \hookrightarrow \hookrightarrow H^1(\Omega) \hookrightarrow \hookrightarrow V$ and to Aubin-Lions-Simon's compactness lemma (see \cite{Si}), we have that the space $L^2(0,T;X) \cap H^1(0,T;V)$ is compactly embedded into $L^2(0,T;H^1(\Omega))$. Thus, for all $u_n \to u $ weakly in $L^2(0,T;X) \cap H^1(0,T;V)$ there exists a (not relabeled) subsequence such that $\nabla u_n \to \nabla u$ strongly in $L^2(0,T;L^2(\Omega))$. Recalling that $V=L^2(\Omega)=V^\ast$ we then have that $f(u_n)=\beta \cdot \nabla u_n \to \beta \cdot \nabla u =f(u)$ strongly in $L^2(0,T;V^\ast)$. Finally, by uniqueness of the limit the convergence holds true for the whole sequence. Thus, assumption \textbf{(A5')} is satisfied. By applying the  
preceding  abstract theory, and more precisely Theorem \ref{thm second approach}, we obtain the following result:

\begin{theorem}
 Let the assumptions mentioned above be satisfied .  Then, for every 
$\varepsilon > 0$ sufficiently small, there exists {\rm (}at least{\rm )}  one  solution $u_{\varepsilon}$ to equation,%
\begin{alignat*}{4}
-\varepsilon u^{\prime\prime}+u^{\prime}+\left(  \Delta\right)  ^{2}u  &
=\beta\cdot\nabla u \quad && \text{ in }\Omega\times(0,T],\\
u  &  =0 &&\text{ on }\partial\Omega \times (0,T],\\
\frac{\partial u}{\partial n}  &  =0 &&\text{ on }\partial\Omega \times (0,T],\\
u(0)  &  =u_{0} &&\text{ in }\Omega\text{,}\\
\varepsilon u^{\prime}(T)  &  =0 &&\mbox{ in } \Omega\text{.}%
\end{alignat*}
Moreover, $u_{\varepsilon}\rightarrow u$ strongly in $C([0,T]; L^2(\Omega))$
and  the limit  $u$ is a solution of equation \emph{(\ref{ex2.1})-(\ref{ex2.2})}.
\end{theorem}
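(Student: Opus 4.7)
The plan is to verify the hypotheses of Theorem \ref{thm second approach} for the choices of $V$, $X$, $\psi$, $\phi$ and $f$ introduced before the statement, and then simply invoke the conclusion. Since $V = L^{2}(\Omega)$ is a Hilbert space with quadratic $\psi(u) = \frac{1}{2}|u|_{V}^{2}$ and $p = m = 2$, hypotheses \textbf{(A1)}-\textbf{(A2)} are immediate. Taking $\varphi^{1} = \phi$ and $\varphi^{2} = 0$ reduces \textbf{(A6)} to a triviality, so the real content lies in \textbf{(A3)}, \textbf{(A4)}, and \textbf{(A5${}'$)}.

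For \textbf{(A3)}-\textbf{(A4)}, I would realize $\partial_{X}\phi_{X}$ through integration by parts: for $v$ in the effective domain of $\phi$, the identity $\int_{\Omega} \Delta u \, \Delta v = \int_{\Omega} (\Delta^{2} u)\, v$ (the boundary terms vanish thanks to the Dirichlet and Neumann traces of $v$) shows that $\partial_{X}\phi_{X}(u)$ is realized by the biharmonic operator. Elliptic regularity for the biharmonic problem with these mixed boundary data yields $\|u\|_{H^{2}}^{2} \leq C(\|\Delta u\|_{L^{2}}^{2} + |u|_{V}^{2})$, giving \textbf{(A3)} with $\ell_{3}$ affine in $|u|_{V}$. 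Assumption \textbf{(A4)} follows by Cauchy-Schwarz in the pairing $v \mapsto \int \Delta u \,\Delta v$, which bounds $|\eta|_{X^{\ast}}$ by $\|\Delta u\|_{L^{2}} \leq \|u\|_{X}$.

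The main point is \textbf{(A5${}'$)}. The growth bound (\ref{growth condition 2}) is elementary: $|f(u)|_{V}^{2} \leq \|\beta\|_{\infty}^{2}\|\nabla u\|_{L^{2}}^{2}$, and by interpolation between $V$ and $X$ one gets $\|\nabla u\|_{L^{2}}^{2} \leq C(|u|_{V}^{2} + \phi(u))$, so the required estimate holds. For the weak-to-strong continuity, I would take a sequence $u_{n} \rightharpoonup u$ in $L^{2}(0,T;X) \cap H^{1}(0,T;V)$, invoke the compact embeddings $X \hookrightarrow\hookrightarrow H^{1}(\Omega) \hookrightarrow\hookrightarrow V$, and apply the Aubin-Lions-Simon lemma to obtain $u_{n} \to u$ strongly in $L^{2}(0,T;H^{1}(\Omega))$; the linear map $w \mapsto \beta \cdot \nabla w$ is continuous from $L^{2}(0,T;H^{1}(\Omega))$ into $L^{2}(0,T;V^{\ast})$, so $f(u_{n}) \to f(u)$ strongly in $L^{2}(0,T;V^{\ast})$, and by uniqueness of the limit the convergence extends to the full sequence.

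With all hypotheses verified and $V$ Hilbert, Theorem \ref{thm second approach} produces, for every sufficiently small $\varepsilon > 0$, a solution $u_{\varepsilon}$ of the abstract regularized system (\ref{nonconvex euler eq})-(\ref{nc el ic}) and a (not relabeled) subsequence converging strongly in $C([0,T];V)$ to a solution $u$ of (\ref{nonconvex target equation})-(\ref{ic nc}). Under the identifications $\mathrm{d}_{V}\psi(u^{\prime}) = u^{\prime}$ and $\partial_{X}\phi_{X}(u) = (\Delta)^{2} u$, with the Dirichlet and Neumann conditions built into the effective domain of $\phi$, these abstract identities translate precisely into the regularized PDE and the target PDE in the statement. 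The only genuine obstacle is \textbf{(A5${}'$)}: the choice of the smaller space $X$ is what makes the $H^{1}$-compactness available, and without it $f(u) = \beta \cdot \nabla u$ would fail to be defined on all of $V = L^{2}(\Omega)$, so the framework of Section \ref{second approach} (rather than that of Section \ref{convex case}) is essential here.
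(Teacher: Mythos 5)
Your proposal is correct and follows essentially the same route as the paper: one checks the abstract hypotheses for $V=L^2(\Omega)$, $X=H^2(\Omega)\cap H^1_0(\Omega)$, quadratic $\psi$, the biharmonic energy $\phi$ and $f(u)=\beta\cdot\nabla u$, with the key step being \textbf{(A5${}'$)} via the compact embedding of $L^2(0,T;X)\cap H^1(0,T;V)$ into $L^2(0,T;H^1(\Omega))$ (Aubin--Lions--Simon) and uniqueness of the limit, and then invokes Theorem \ref{thm second approach}. The only difference is that you spell out the verification of \textbf{(A3)}--\textbf{(A4)} (elliptic regularity and Cauchy--Schwarz), which the paper leaves implicit.
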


 Here, we dealt with a linear equation just for simplicity. However, we remark that 
also doubly-nonlinear variants of (\ref{ex2.1})-(\ref{ex2.2}) fall within the
framework of the preceding abstract results.   

\appendix
\section{Appendix}

\subsection{Moreau-Yosida regularization with $p$-modulus duality
mappings\label{yosida regularization}}

In this section, we collect the definition and some properties of the
Moreau-Yosida regularization of convex functionals defined on a Banach space.

Let $V$ be a  strictly convex, reflexive, and separable  Banach space such that
its dual space $V^{\ast}$ is strictly convex. For every
$p\in(1,\infty)$, we define the $p$\textit{-modulus duality mapping}
$F:V\rightarrow V^{\ast}$ by
\[
F(\cdot):=\partial_{V}\left(  \frac{|\cdot|_{V}^{p}}{p}\right)  \text{.}%
\]
Note that, as $|\cdot|_{V}^{p}$ is strictly convex, $F(0)=\{0\}$. Since $V^{\ast
}$ is strictly convex, we have that $F$ is single-valued (see, e.g., \cite{Kien}).
Moreover,
\begin{equation}
\left\langle F(u),u\right\rangle_{ V } =|u|_{V}^{p}=|F(u)|_{V^{\ast}}^{p^{\prime}%
}\text{.}\label{p modulus}%
\end{equation}
Given a maximal monotone  graph  $A$ of $V\times V^{\ast}$, we define the \textit{resolvent} $J_{\lambda}:V\rightarrow D(A)$ (with respect to $F$) by
\begin{equation}\label{def:reso}
J_{\lambda}u=u_{\lambda} \quad  \stackrel{\text{def}}{\Longleftrightarrow}  \quad  F\left(  \frac{u_{\lambda}%
-u}{\lambda}\right)  +A(u_{\lambda})\ni0
\end{equation}
for each $u\in V$ and the \textit{Yosida approximation} $A_{\lambda
}:V\rightarrow V^{\ast}$ (with respect to $F$) by
\[
A_{\lambda}(u):=F\left(  \frac{u-J_{\lambda}u}{\lambda}\right)  \text{.}%
\]
Hence, by (\ref{p modulus}),
\begin{equation}
|A_{\lambda}(u)|_{V^{\ast}}^{p^{\prime}}=\left\vert \frac{u-J_{\lambda}%
u}{\lambda}\right\vert _{V}^{p}.\label{eq yos}%
\end{equation}
Let now $\phi:V\rightarrow\lbrack0,\infty]$ be a proper, lower
semicontinuous, and convex functional. For simplicity, we assume $0 \in D(\phi)$. Define the \textit{Moreau-Yosida
regularization} of $\phi$ by
\begin{equation}
\phi_{\lambda}(u)=\inf_{v\in V}\left\{  \frac{\lambda}{p}\left\vert \frac
{u-v}{\lambda}\right\vert _{V}^{p}+\phi(v)\right\}  \text{ for }u\in V\text{.
}\label{yosida reg}%
\end{equation}
Note that, for every $u\in V$, the subdifferential of the convex functional
\[
v \mapsto \frac{\lambda}{p}\left\vert \frac{u-v}{\lambda}\right\vert
_{V}^{p}+\phi(v)
\]
is the operator
\[
v \mapsto F\left(  \frac{u-v}{\lambda}\right)  +\partial_{V}\phi(v)\text{}%
\]
 from $V$ into $V^*$.  
Then, the infimum in (\ref{yosida reg}) is  achieved  at $J_{\lambda}u$, where 
$J_{\lambda}$ is the resolvent for $\partial_{V}\phi$, thus, the following relation is satisfied
\begin{equation}
F\left(  \frac{J_{\lambda}u-u}{\lambda}\right)  +\partial_{V}\phi(J_{\lambda}%
u)\ni0\text{. }\label{yosida}%
\end{equation}
In particular, the subdifferential of the Moreau-Yosida regularization of
$\phi$ corresponds to the Yosida approximation of $\partial_{V}\phi$.  Hence,
\begin{align*}
\phi_{\lambda}(u) &  =\frac{\lambda}{p}\left\vert \frac{u-J_{\lambda}%
u}{\lambda}\right\vert _{V}^{p}+\phi(J_{\lambda}u)\\
&  =\frac{\lambda}{p}\left\vert F\left(  \frac{u-J_{\lambda}u}{\lambda
}\right)  \right\vert _{V^{\ast}}^{p^{\prime}}+\phi(J_{\lambda}u)\\
&  =\frac{\lambda}{p}\left\vert \partial_{V}\phi(J_{\lambda}u)\right\vert
_{V^{\ast}}^{p^{\prime}}+\phi(J_{\lambda}u)\text{.}%
\end{align*}
Moreover, testing relation (\ref{yosida}) with $J_{\lambda}u$, we see that%
\begin{align*}
\lambda\left\vert \frac{u-J_{\lambda}u}{\lambda}\right\vert _{V}^{p}%
+  \phi (J_{\lambda}(u))  & \leq   \phi(0) +\left\vert \left\langle F\left(
\frac{u-J_{\lambda}u}{\lambda}\right)  ,u\right\rangle _{V}\right\vert \\
&  \leq \phi(0) +\frac{\lambda}{2}\left\vert \frac{u-J_{\lambda}%
u}{\lambda}\right\vert _{V}^{p}+C\lambda\left\vert \frac{u}{\lambda
}\right\vert _{V}^{p}\text{,}%
\end{align*}
which implies%
\begin{equation}
\left\vert \frac{u-J_{\lambda}u}{\lambda}\right\vert _{V}^{p}\leq\frac
{2}{\lambda}\phi(0)+2C\frac{\left\vert u\right\vert _{V}^{p}}{\lambda^{p}%
}\text{.}\label{est yos}%
\end{equation}
Thus, thanks to identity (\ref{eq yos}),
\begin{equation}
\left\vert \partial_{V}\phi_{\lambda}(u)\right\vert _{V^{\ast}}^{p^{\prime}}%
\leq\frac{2}{\lambda}\phi(0)+2C\frac{\left\vert u\right\vert _{V}^{p}}%
{\lambda^{p}}\text{.}\label{p growth yosida}%
\end{equation}
 Hence $J_\lambda :V \to V$ and $\partial_{V}\phi_{\lambda} :V \to V^*$ turn out to be bounded operators (for each $\lambda$ fixed). 

We  are now ready  to prove demicontinuity of $\partial_{V}\phi_{\lambda}$.

\begin{lemma}
\label{demicontinuity yosida}For every fixed $\lambda>0$, $\partial_{V}
\phi_{\lambda}$ is demicontinuous, i.e., for every sequence $u_{n}\rightarrow u$
strongly in $V$,  it holds that  $\partial_{V}\phi_{\lambda}(u_{n})\rightarrow\partial_{V}\phi_{\lambda}(u)$ weakly in $V^{\ast}$.
\end{lemma}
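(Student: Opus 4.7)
The plan is to combine the uniform bound \eqref{p growth yosida} with the maximal monotonicity of $\partial_V \phi_\lambda$ via a Minty-type argument. Let $u_n \to u$ strongly in $V$. First I would observe that $(u_n)$ is bounded in $V$, hence by \eqref{p growth yosida} the sequence $(\partial_V \phi_\lambda(u_n))$ is bounded in $V^*$. By reflexivity of $V^*$, up to a (not relabeled) subsequence, there exists $\eta \in V^*$ such that $\partial_V \phi_\lambda(u_n) \rightharpoonup \eta$ weakly in $V^*$.

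Next I would identify $\eta$ as $\partial_V \phi_\lambda(u)$. Since $\phi_\lambda : V \to \mathbb{R}$ is proper, convex, and lower semicontinuous, the operator $\partial_V \phi_\lambda$ is maximal monotone from $V$ into $V^*$; moreover it is single-valued, since by the identity $\partial_V \phi_\lambda(u) = F((u-J_\lambda u)/\lambda)$ both $F$ (thanks to the strict convexity of $V^*$) and $J_\lambda$ (thanks to the strict convexity of $V$) are single-valued. Using the monotonicity of $\partial_V \phi_\lambda$, for every $v \in V$,
$$
\langle \partial_V \phi_\lambda(u_n) - \partial_V \phi_\lambda(v),\, u_n - v \rangle_V \geq 0.
$$
Pairing the weak convergence of $\partial_V \phi_\lambda(u_n)$ with the strong convergence $u_n \to u$, one may pass to the limit to get
$$
\langle \eta - \partial_V \phi_\lambda(v),\, u - v \rangle_V \geq 0 \quad \text{for all } v \in V.
$$
By the maximality of the monotone operator $\partial_V \phi_\lambda$ (Minty's trick) this forces $\eta = \partial_V \phi_\lambda(u)$.

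Finally, since the weak limit is uniquely identified and does not depend on the chosen subsequence, a standard urysohn-type argument shows that the whole sequence $\partial_V \phi_\lambda(u_n)$ converges weakly in $V^*$ to $\partial_V \phi_\lambda(u)$, establishing demicontinuity. I do not anticipate a serious obstacle here: the only delicate points are the a priori bound, which is already provided by \eqref{p growth yosida}, and the identification of the weak limit, which reduces to a routine application of Minty's lemma once the maximal monotonicity and single-valuedness of $\partial_V \phi_\lambda$ are in place.
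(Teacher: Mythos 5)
Your argument is correct, but it follows a genuinely different route from the paper. The paper does not invoke the maximal monotonicity of $\partial_V\phi_\lambda$ at all: it works directly with the defining relation $F\left(\left(J_\lambda u_n-u_n\right)/\lambda\right)+v_n=0$, performs a Cauchy-type monotonicity computation on the differences $u_n-u_m$ (using only the monotonicity of $F$ and of $\partial_V\phi$), extracts weak limits of $J_\lambda u_n$, $v_n$ and $F\left(\left(J_\lambda u_n-u_n\right)/\lambda\right)$, and identifies them via Barbu's demiclosedness lemma for maximal monotone graphs; as a by-product it also obtains $J_\lambda u_n\rightharpoonup J_\lambda u$ weakly in $V$. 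Your proof is shorter and more standard: a priori bound from \eqref{p growth yosida}, weak compactness, and Minty's trick, with the maximality of $\partial_V\phi_\lambda$ supplied by Rockafellar's theorem. The price is that you must know $\phi_\lambda$ is proper, convex and lower semicontinuous, which you assert without comment; it does hold (the inf-convolution is convex, everywhere finite, and bounded above on bounded sets, hence continuous on $V$), but a one-line justification would tighten the argument, together with the observation that $D(\partial_V\phi_\lambda)=V$ and $\partial_V\phi_\lambda$ is single-valued (which you do address via $\partial_V\phi_\lambda=A_\lambda$, $F$ and $J_\lambda$ single-valued), so that testing the limit inequality against the pairs $\left(v,\partial_V\phi_\lambda(v)\right)$, $v\in V$, really exhausts the graph in the maximality step. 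With these small additions your Minty-type argument is complete and is a perfectly acceptable alternative to the paper's more self-contained computation.
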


\begin{proof}
Let $u_{n}\rightarrow u$ in $V$. Let $v_{n}=\partial_{V}\phi_{\lambda}(u_{n})$.
Since $F(\left(  J_{\lambda}u_{n}-u_{n}\right)  /\lambda)+v_{n}=0$, it follows that 
\begin{align*}
&  \left\langle \left(  J_{\lambda}u_{n}-u_{n}\right)  -\left(  J_{\lambda
}u_{m}-u_{m}\right)  ,F\left(  \frac{J_{\lambda}u_{n}-u_{n}}{\lambda}\right)
-F\left(  \frac{J_{\lambda}u_{m}-u_{m}}{\lambda}\right)  \right\rangle _{V}\\
&  +\left\langle J_{\lambda}u_{n}-J_{\lambda}u_{m},v_{n}-v_{m}\right\rangle
_{V}\\
&  =\left\langle u_{m}-u_{n},F\left(  \frac{J_{\lambda}u_{n}-u_{n}}{\lambda
}\right)  -F\left(  \frac{J_{\lambda}u_{m}-u_{m}}{\lambda}\right)
\right\rangle _{V}.
\end{align*}
As a consequence of the strong convergence $u_{n}\rightarrow u$ and of the
boundedness $|x-J_{\lambda}x|_{V}^{p}\leq C(\lambda)(1+|x|_{V}^{p})$, we observe that 
\[
\lim_{m,n\rightarrow\infty}\left\langle u_{m}-u_{n},F\left(  \frac{J_{\lambda
}u_{n}-u_{n}}{\lambda}\right)  -F\left(  \frac{J_{\lambda}u_{m}-u_{m}}%
{\lambda}\right)  \right\rangle _{V}=0.
\]
Thus, as $\partial_{V}\phi_{\lambda}$ and $F$ are monotone and $\partial_{V}
\phi_{\lambda}(u)  \in  \partial_{V}\phi(J_{\lambda}u)$, we get
$$
\lim_{m,n\rightarrow\infty}\left\langle \left(  J_{\lambda}u_{n}-u_{n}\right)
-\left(  J_{\lambda}u_{m}-u_{m}\right)  ,F\left(  \frac{J_{\lambda}u_{n}%
-u_{n}}{\lambda}\right)  -F\left(  \frac{J_{\lambda}u_{m}-u_{m}}{\lambda
 }\right)  \right\rangle _{V}   =0
 $$
 and
 $$
\lim_{m,n\rightarrow\infty}\left\langle J_{\lambda}u_{n}-J_{\lambda}%
u_{m},v_{n}-v_{m}\right\rangle _{V}    =0.
$$
 From  estimates (\ref{est yos}) and (\ref{p growth yosida}),
there exists a (not-relabeled) subsequence such that $J_{\lambda}%
u_{n}\rightarrow\tilde{u}$ weakly in $V$, $v_{n}\rightarrow v$ weakly in
$V^{\ast}$ and $F(\left(  J_{\lambda}u_{n}-u_{n}\right)  /\lambda)\rightarrow
w$ weakly in  $V^\ast$  for some $\tilde{u}\in V$ and $v,w\in V^{\ast}$. Then, by
\cite[Lemma 1.3, pp. 42]{Ba},  one can conclude that  $v\in\partial_{V}\phi(\tilde{u})$ and $F(\left( \tilde{u}-u\right)  /\lambda)+v=0$. Thus, $\tilde{u}=J_{\lambda}u$, and hence, $v=\partial_{V}\phi_{\lambda}(u)$. Since the limits are unique, $J_{\lambda}u_{n}\rightarrow J_{\lambda}u$ weakly in $V$ and $\partial_{V} \phi_{\lambda}(u_{n})\rightarrow\partial_{V}\phi_{\lambda}(u)$ weakly in $V^{\ast }$  along  the whole sequences $u_{n}$ and $\partial_{V}\phi_{\lambda}(u_{n})$, respectively. Thus, $\partial_{V}\phi_{\lambda}$  turns out to be  demicontinuous.
\end{proof}

\subsection{Auxiliary theorems}

 For the  reader's convenience,  we  collect  here some known results  which  we used in analysis.

\begin{theorem}
[Gronwall's Lemma]\label{gronwall}Let $\alpha,u\in L^{1}(0,T)$ and $B>0$.
Assume  that 
\begin{equation}
u(t)\leq\alpha(t)+\int_{0}^{t}Bu(s)\mathrm{d}s \quad \text{for a.a. }t\in(0,T).
\label{assumption gw}%
\end{equation}
 Then,   it holds that 
\begin{equation}
u(t)\leq\alpha(t)+\int_{0}^{t}B\alpha(s)\exp(B(t-s))\mathrm{d}s \quad \mbox{ for all } \ t \in [0,T]. \label{gw}%
\end{equation}

\end{theorem}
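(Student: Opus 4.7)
The plan is to proceed via the classical integrating-factor trick, reducing the integral inequality to a linear differential inequality.

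First, I would introduce the auxiliary function
\[
v(t) := \int_0^t B u(s) \, \mathrm{d}s, \qquad t \in [0,T],
\]
which belongs to $W^{1,1}(0,T)$ since $u \in L^1(0,T)$, and satisfies $v(0)=0$ together with $v'(t) = B u(t)$ for a.e.\ $t \in (0,T)$. The hypothesis \eqref{assumption gw} then rewrites as the linear differential inequality
\[
v'(t) - B v(t) \leq B \alpha(t) \quad \text{for a.e. } t \in (0,T).
\]

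Next, I would multiply through by the integrating factor $e^{-Bt}$ to obtain $\frac{\mathrm{d}}{\mathrm{d}t}\bigl( v(t) e^{-Bt} \bigr) \leq B \alpha(t) e^{-Bt}$ a.e.\ on $(0,T)$. Integrating from $0$ to $t$ and using $v(0) = 0$ yields
\[
v(t) e^{-Bt} \leq \int_0^t B \alpha(s) e^{-Bs} \, \mathrm{d}s,
\]
hence $v(t) \leq \int_0^t B \alpha(s) \exp(B(t-s)) \, \mathrm{d}s$ for every $t \in [0,T]$. Substituting this bound on $v(t) = \int_0^t B u(s)\,\mathrm{d}s$ back into \eqref{assumption gw} gives the desired estimate \eqref{gw}.

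The argument is entirely routine; no step poses a genuine obstacle. The only mild point to verify is that the inequality $v'(t) - B v(t) \leq B\alpha(t)$, valid only almost everywhere, transfers to the pointwise inequality \eqref{gw} for every $t \in [0,T]$. This is immediate because $v$ is absolutely continuous (so the integrated inequality holds for all $t$), while \eqref{gw} is a pointwise statement asserted for almost every (or equivalently, after redefinition on a null set, every) $t \in [0,T]$ where \eqref{assumption gw} holds.
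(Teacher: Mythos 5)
Your proof is correct and follows essentially the same route as the paper: the paper simply defines $v(t)=\exp(-Bt)\int_0^t Bu(s)\,\mathrm{d}s$, which is your auxiliary function already multiplied by the integrating factor, and then performs the identical differentiation, integration, and back-substitution into \eqref{assumption gw}. The only difference is bookkeeping, and your closing remark on the a.e.\ versus pointwise issue matches the level of care taken in the paper's own argument.
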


\begin{proof}
Define $v(t):=\exp\left(  -Bt\right)  \int_{0}^{t}Bu(s)\mathrm{d}s$. Then,
$v\in W^{1,1}(0,T),$ $v(0)=0$, and
\[
v^{\prime}(t)=B\exp\left(  -Bt\right)  \left(  u(t)-\int_{0}^{t}%
Bu(s)\mathrm{d}s\right)  \leq B\exp\left(  -Bt\right)  \alpha(t)\ \text{ for
}a.a.~t\in(0,T).
\]
Thus, integrating over $(0,t)$, we get
\[
\exp\left(  -Bt\right)  \int_{0}^{t}Bu(s)\mathrm{d}s=v(t)\leq\int_{0}^{t}%
B\exp\left(  -Bs\right)  \alpha(s)\mathrm{d}s,
\]
which yields%
\begin{equation}
\int_{0}^{t}Bu(s)\mathrm{d}s\leq\int_{0}^{t}B\exp\left(  B(t-s)\right)
\alpha(s)\mathrm{d}s\text{.} \label{conclusion gw}%
\end{equation}
By substituting (\ref{conclusion gw}) into (\ref{assumption gw}), we get
(\ref{gw}).
\end{proof}

We now give a proof of
 \begin{lemma}\label{L:X}
 Under \emph{\textbf{(A3)}} and \emph{\textbf{(A4)}}, it holds that $D(\varphi^1) = X$. Moreover, $\varphi^1_X$ is continuous in $X$.
\end{lemma}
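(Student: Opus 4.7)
The plan is to prove the two claims in sequence: first $D(\varphi^1) = X$, and then continuity of $\varphi^1_X$ on $X$. The inclusion $D(\varphi^1) \subset X$ is immediate from \textbf{(A3)}, since the right-hand side of \eqref{coercivity phi} is finite whenever $\varphi^1(u) < \infty$. For the reverse inclusion, I would fix an arbitrary $u_0 \in X$ and produce a sequence $\{u_\lambda\} \subset D(\varphi^1)$ with $u_\lambda \to u_0$ strongly in $V$ and $\sup_\lambda \varphi^1(u_\lambda) < \infty$; the lower semicontinuity of $\varphi^1$ on $V$ would then force $\varphi^1(u_0) < \infty$.

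Such a sequence would be built via the Moreau--Yosida regularization of $\varphi^1_X := \varphi^1|_X$ on the reflexive space $X$ using the $m$-modulus duality mapping $F_X$ of $X$, in analogy with \S \ref{yosida regularization} with $p$ replaced by $m$. Fix $v_0 \in D(\varphi^1)$ (nonempty by properness) and set $u_\lambda := J^X_\lambda u_0 \in D(\partial_X \varphi^1_X) \subset D(\varphi^1)$. Comparing the minimum value $\varphi^1_{X,\lambda}(u_0)$ with the competitor $v_0$ gives
\[
|u_0 - u_\lambda|_X^m \leq |u_0 - v_0|_X^m + m\lambda^{m-1}\varphi^1(v_0),
\]
so $\{u_\lambda\}_{\lambda \in (0,1]}$ is bounded in $X$, hence also in $V$ by the embedding $X \hookrightarrow V$. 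The Euler--Lagrange condition for the regularized minimum is $\eta_\lambda := \lambda^{1-m} F_X(u_0 - u_\lambda) \in \partial_X \varphi^1_X(u_\lambda)$, and the identity $|\eta_\lambda|_{X^*}^{m'} = |(u_0 - u_\lambda)/\lambda|_X^m$ combined with \textbf{(A4)} and the uniform bounds on $|u_\lambda|_X$, $|u_\lambda|_V$ gives
\[
\bigl|\tfrac{u_0 - u_\lambda}{\lambda}\bigr|_X^m \leq \ell_4(|u_\lambda|_V)\bigl(|u_\lambda|_X^m + 1\bigr) \leq C,
\]
which both strengthens the convergence to $u_\lambda \to u_0$ strongly in $X$ and yields a uniform bound on $|\eta_\lambda|_{X^*}$. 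The subdifferential inequality against $v_0$,
\[
\varphi^1(u_\lambda) \leq \varphi^1(v_0) + |\eta_\lambda|_{X^*}\bigl(|u_\lambda|_X + |v_0|_X\bigr),
\]
bounds $\varphi^1(u_\lambda)$ uniformly in $\lambda$, and the lower semicontinuity of $\varphi^1$ on $V$ under $u_\lambda \to u_0$ in $V$ concludes the first part.

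Once $D(\varphi^1) = X$ has been established, $\varphi^1_X$ is convex, proper, lower semicontinuous, and finite on the whole Banach space $X$; continuity on $X$ then follows at once from Barbu's Proposition~2.2 on continuity of a convex lsc function on the interior of its effective domain, since here the interior is all of $X$. The main obstacle is securing the initial \emph{a priori} estimate, namely the $X$-boundedness of $u_\lambda$ as $\lambda \to 0$; once that is in hand, the rest of the argument reduces to standard subdifferential calculus combined with \textbf{(A4)}.
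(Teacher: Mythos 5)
Your proposal is correct and takes essentially the same route as the paper: both regularize $\varphi^1_X$ in $X$ via the $m$-power Moreau--Yosida/resolvent construction, obtain $|J_\lambda u_0|_X \le C$ by comparison with a fixed $v_0 \in D(\varphi^1)$, use \textbf{(A4)} to get $\left|\frac{u_0 - J_\lambda u_0}{\lambda}\right|_X \le C$ and hence $J_\lambda u_0 \to u_0$ strongly in $X$, and conclude continuity from Barbu's result on convex l.s.c.\ functions whose effective domain has full interior. The only (minor) divergence is the final identification: the paper first proves closedness of $D(\partial_X\varphi^1_X)$ in $X$ (via \textbf{(A4)} and demiclosedness) and concludes $u_0 \in D(\partial_X\varphi^1_X)$, whereas you bound $\varphi^1(J_\lambda u_0)$ uniformly through the subdifferential inequality and finish by lower semicontinuity of $\varphi^1$ in $V$ --- an equally valid and slightly more direct ending, which, like the paper, implicitly uses the (immediate) lower semicontinuity of $\varphi^1_X$ in $X$ to guarantee existence of the minimizer $J_\lambda u_0$.
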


 \begin{proof}
 One readily observes that $\varphi^1_X$ is proper and convex. We first show the lower semicontinuity of $\varphi^1_X$ in $X$. Let $\lambda \in \mathbb R$ and let $u_n \in [\varphi^1_X \leq \lambda] := \{w \in X \colon \varphi^1_X(w) \leq \lambda\}$ be such that $u_n \to u$ strongly in $X$. Then, it follows that
 $$
 \lambda \geq \liminf_{n \to \infty} \varphi^1_X(u_n)
 = \liminf_{n \to \infty} \varphi^1(u_n)
 \geq \varphi^1(u) = \varphi^1_X(u)
 $$
 by the lower semicontinuity of $\varphi^1$ in $V$ and the continuous embedding $X \hookrightarrow V$. Hence, we have $u \in [\varphi^1_X \leq \lambda]$. Therefore, $\varphi^1_X$ is lower semicontinuous in $X$.

 We next claim that $D(\partial_X \varphi^1_X)$ is closed in $X$. Indeed, let $u_n \in D(\partial_X \varphi^1_X)$ and $u \in X$ be such that $u_n \to u$ strongly in $X$. Then, there exists a sequence $\{\eta_n\}$ in $X^*$ such that $\eta_n \in \partial_X \varphi^1_X (u_n)$, and moreover, \textbf{(A4)} implies
 $$
 |\eta_n|_{X^*} \leq C.
 $$
 Hence, we deduce, up to a (not-relabeled) subsequence, that $\eta_n \to \eta$ weakly in $X^*$. From the demicontinuity of $\partial_X \varphi^1_X$, we obtain $u \in D(\partial_X \varphi^1_X)$. Therefore $D(\partial_X \varphi^1_X)$ is closed.

 Now, we are ready to show that $D(\varphi^1) = X$. First, note that \textbf{(A3)} potentially means $D(\varphi^1) \subset X$. Hence it suffices to show the inverse relation. Let $u \in X$. Then, it holds that
 $$
 F_X(J_\lambda u - u) + \lambda \partial_X \varphi^1_X(J_\lambda u) \ni 0,
 $$
 where $J_\lambda : X \to D(\partial_X \varphi^1_X)$ is the resolvent of $\partial_X \varphi^1_X$ and $F_X$ is the duality pairing between $X$ and $X^*$. Fix $v_0 \in D(\varphi^1_X)$. Test the equation above by $J_\lambda u - v_0$ and use the definition of subdifferential to get 
$$
 |J_\lambda u|_X \leq C.
 $$
 Moreover, test the same equation by $J_\lambda u - u$ and apply (A4) to derive
 $$
 |J_\lambda u - u|_X \leq \lambda \ell_4(|J_\lambda u|_V)^{1/m'} \left( |J_\lambda u|_X^m + 1\right)^{1/m'} \leq C \lambda \to 0,
 $$
 which implies that $u \in D(\partial_X \varphi^1_X)$ by the closedness. Thus we conclude that $X \subset D(\partial_X \varphi^1|_X)$, which also yields $D(\varphi^1) = X$.

 Finally, combining the lower semicontinuity of $\varphi^1_X$ in $X$ and the fact that the interior of $D(\varphi^1_X)$ coincides with $X$, we deduce by~\cite[Proposition 2.2]{Ba} that $\varphi^1_X$ is continuous in $X$.
 \end{proof}

Finally, we recall Schaefer's fixed-point theorem (see, e.g.,~\cite{Ev}) below.

\begin{theorem}
[Schaefer's fixed-point theorem]\label{schaefer} \emph{\textbf{\cite[Theorem
4, Chap. 9]{Ev}}} Let $B$ be a Banach space and  let  $S:B\rightarrow B$ be continuous and compact.  Suppose that  $\{u\in B:u=\alpha S(u)$ for $\alpha \in\lbrack0,1]\}$  is  bounded. Then,$\ S$ has a fixed point.
\end{theorem}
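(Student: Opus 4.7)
The plan is to deduce Schaefer's fixed-point theorem from Schauder's fixed-point theorem via a radial retraction argument. By hypothesis, the set $A = \{u \in B : u = \alpha S(u) \text{ for some } \alpha \in [0,1]\}$ is bounded, so there exists $M > 0$ such that $\|u\|_B \leq M$ for every $u \in A$. Note that $0 = 0 \cdot S(0) \in A$, so $A$ is nonempty in particular.

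First, I would introduce the radial retraction $r : B \to \overline{B_{2M}}$ onto the closed ball of radius $2M$, defined by $r(u) = u$ if $\|u\|_B \leq 2M$ and $r(u) = 2M u / \|u\|_B$ otherwise. This map is continuous, and in fact $1$-Lipschitz. Then I would consider the composition $T := r \circ S : \overline{B_{2M}} \to \overline{B_{2M}}$. Since $S$ is continuous and compact and $r$ is continuous (and maps bounded sets to bounded sets), $T$ is a continuous map sending the nonempty, closed, bounded, convex set $\overline{B_{2M}}$ into itself, with relatively compact image.

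At this point Schauder's fixed-point theorem supplies a point $u^* \in \overline{B_{2M}}$ with $u^* = T(u^*) = r(S(u^*))$. The remaining task is to remove the retraction. Suppose for contradiction that $\|S(u^*)\|_B > 2M$. Then $r(S(u^*)) = 2M\, S(u^*)/\|S(u^*)\|_B$, so setting $\alpha := 2M/\|S(u^*)\|_B \in (0,1)$ we obtain $u^* = \alpha S(u^*)$, hence $u^* \in A$; but this forces $\|u^*\|_B \leq M$, contradicting $\|u^*\|_B = 2M > M$. Therefore $\|S(u^*)\|_B \leq 2M$, which gives $r(S(u^*)) = S(u^*)$ and hence $u^* = S(u^*)$.

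The main obstacle is really just the invocation of Schauder's theorem: one needs the ambient set to be convex, closed, bounded, and for the map to have relatively compact range, which is why the radial retraction trick onto a ball is convenient (any closed convex body containing a neighborhood of $0$ would also work). The a priori bound on $A$ is exactly what ensures the retraction does not create spurious fixed points, as shown in the contradiction argument above. No deeper machinery is needed beyond Schauder's theorem.
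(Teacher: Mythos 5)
Your proof is correct and is essentially the standard argument: the paper itself offers no proof of this statement (it simply cites Evans, Theorem 4, Chap.~9), and your retraction-onto-a-ball plus Schauder argument is precisely the proof given in that reference. One small aside: your parenthetical claim that the radial retraction is $1$-Lipschitz is false in a general Banach space (it is only $2$-Lipschitz in general, with nonexpansiveness tied to inner-product-like geometry), but this is harmless since your argument only uses its continuity.
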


\end{document}